\pgfplotsset{soldot/.style={color=black,only marks,mark=*}} \pgfplotsset{holdot/.style={color=black,fill=white,only marks,mark=*}}
\newtheorem{thm}{Theorem}[section]
\newtheorem{prop}[thm]{Proposition}
\newtheorem{cor}[thm]{Corollary}
\newtheorem{lem}[thm]{Lemma}
\theoremstyle{definition}
\newtheorem{define}[thm]{Definition}
\theoremstyle{remark}
\newtheorem{remark}[thm]{Remark}
\newtheorem{remarks}[thm]{Remarks}
\newtheorem{example}[thm]{Example}
\newtheorem{examples}[thm]{Examples}
\numberwithin{equation}{section}
\newcommand{\an}{\CC^\omega}%{{\mathcal O}} 
 \newcommand{\J}{{\mathcal J}}
 \newcommand{\N}{{\mathbb N}}
 \newcommand{\R}{{\mathbb R}}
 \newcommand{\C}{{\mathbb C}}
\newcommand{\sph}{{\mathbb S}}
\newcommand{\Kk}{{\EuScript K}}
\newcommand{\Bb}{{\EuScript B}}
\newcommand{\Hh}{{\EuScript H}}
\newcommand{\tildebaja}{{\raise.17ex\hbox{$\scriptstyle\sim$}}}
\newcommand{\Int}{\operatorname{Int}}
\newcommand{\cl}{\operatorname{Cl}}
\newcommand{\dist}{\operatorname{dist}}
\newcommand{\id}{\operatorname{id}}
\newcommand{\Sing}{\operatorname{Sing}}
\newcommand{\x}{{\tt x}}
\newcommand{\ol}{\overline}
\newcommand{\veps}{\varepsilon}
\newcommand{\sfs}{\mathsf{s}}
\newcommand{\sft}{\mathsf{t}}
\newcommand{\sfu}{\mathsf{u}}
\newcommand{\mr}{\mathrm}
\newcommand{\mc}{\mathcal}
\newcommand{\mscr}{\mathscr}
\newcommand{\CC}{\mscr{C}}
\newcommand{\EE}{\mscr{E}}
\newcommand{\atsr}{\mscr{C}^r\text{-}\mathtt{ats}}
\newcommand{\atsinfty}{\mscr{C}^\infty\text{-}\mathtt{ats}}
\newcommand{\atsone}{\mscr{C}^1\text{-}\mathtt{ats}}
\newcommand{\atsrp}{\mscr{C}^r_*\text{-}\mathtt{ats}}
\newcommand{\atsinftyp}{\mscr{C}^\infty_*\text{-}\mathtt{ats}}
\begin{document}

\title[Smooth approximations in PL geometry]{Smooth approximations in PL geometry}

\author{Jos\'e F. Fernando}
\address{Departamento de \'Algebra, Geomatr\'\i a y Topolog\'\i a, Facultad de Ciencias Matem\'aticas, Universidad Complutense de Madrid, 28040 MADRID (SPAIN)}
\email{josefer@mat.ucm.es}
\thanks{The first author is supported by Spanish STRANO MTM2017-82105-P and Grupos UCM 910444. This article has been mainly written during a couple of one month research stays of the first author in the Dipartimento di Matematica of the Universit\`a di Trento. The first author would like to thank this department for the invitation and the very pleasant working conditions.}

\author{Riccardo Ghiloni}
\address{Dipartimento di Matematica, Via Sommarive, 14, Universit\`a di Trento, 38123 Povo (ITALY)}
\email{riccardo.ghiloni@unitn.it}
\thanks{The second author is supported by GNSAGA of INDAM} 

\begin{abstract}
Let $Y\subset\R^n$ be a triangulable set and let $r$ be either a positive integer or $r=\infty$. We say that $Y$ is a $\mscr{C}^r$-approximation target space, or a $\atsr$ for short, if it has the following universal approximation property: \em For each $m\in\N$ and each locally compact subset $X$ of~$\R^m$, each continuous map $f:X\to Y$ can be approximated by $\mscr{C}^r$ maps $g:X\to Y$ with respect to the strong Whitney $\mscr{C}^0$ topology\em. Taking advantage of new approximation techniques we prove: \em if $Y$ is weakly $\mscr{C}^r$ triangulable, then $Y$ is a $\atsr$\em. This result applies to relevant classes of triangulable sets, namely: (1) every locally compact polyhedron is a $\atsinfty$, (2) every set that is locally $\mscr{C}^r$ equivalent to a polyhedron is a $\atsr$ (this includes $\mscr{C}^r$ submanifolds with corners of $\R^n$) and (3) every locally compact locally definable set of an arbitrary o-minimal structure is a $\atsone$ (this includes locally compact locally semialgebraic sets and locally compact subanalytic sets). In addition, we prove: \em if $Y$ is a global analytic set, then each proper continuous map $f:X\to Y$ can be approximated by proper $\mscr{C}^\infty$ maps $g:X\to Y$\em. Explicit examples show the sharpness of our results.
\end{abstract}

\keywords{Smooth approximations in spaces with singularities, weakly differentiable triangulations, generalized simplicial approximation theorem, `shrink-widen' covering and approximation, definable sets in o-minimal structures, weak differentiable retractions, immersion of $C$-analytic sets as singular sets of coherent $C$-analytic sets}
\subjclass[2010]{Primary 57Q55, 57R12; Secondary 14P15, 14P20, 53A15}
\date{20/03/2021}

\maketitle 

%%%%%%%

\section{Introduction, main theorems and corollaries}\label{s1}

Approximation is a tool of great importance in many areas of mathematics. It allows to understand objects and morphisms of a certain category taking advantage of the corresponding properties of objects and morphisms in other categories that enjoy a better behavior and are dense inside the one we want to study.

In the geometrical context a remarkable example of an approximation result with thousand of applications concerns Whitney's approximation theorem \cite{wh2} for continuous maps whose target space is a $\mscr{C}^r$ submanifolds $Y$ of $\R^n$ for either a positive integer $r$ or $r=\infty$. An important fact is the existence of a system of $\mscr{C}^r$ tubular neighborhoods of $Y$ in $\R^n$ (together with the corresponding $\mscr{C}^r$ retractions onto $Y$). Whitney's approximation theorem can be used for instance to prove the existence of a unique $\mscr{C}^\infty$ manifold structure on each differentiable manifold of class $\mscr{C}^r$ for each positive integer $r$ (see \cite{hir3}).

This paper deals with the problem of approximating continuous maps by differentiable maps when the target space $Y\subset\R^n$ may have `singularities'. Actually, we require that $Y$ is at least triangulable.

The special case when the target space $Y\subset\R^n$ is a Nash set was already treated by Coste, Ruiz and Shiota in \cite{crs1}. In fact, they approximate real analytic maps on a compact Nash manifolds by a very restrictive class of approximating maps, the so-called Nash maps, see \cite[Ch.8]{bcr}. Recall that a function $f:\R^n\to\R$ is \em (real) Nash \em if it is of class $\mscr{C}^\omega$ (that is, real analytic) and algebraic over the real polynomials, that is, there exists a non-zero polynomial $P\in\R[x_1,\ldots,x_n,y]$ such that $P(x,f(x))=0$ for each $x\in\R^n$. In addition, $Y\subset\R^n$ is a \em Nash set \em if there exist a Nash function $f$ on $\R^n$ such that $Y=\{f=0\}$. A Nash set $X\subset\R^m$ that is in addition a smooth manifold is called a \em Nash manifold\em. A map $F:\R^m\to\R^n$ is {\it Nash} if its components are Nash functions. The \em Nash maps \em $f:X\to Y$ are the restrictions from $X$ to $Y$ of Nash maps $F:\R^m\to\R^n$ such that $F(X)\subset Y$. The authors proved in \cite[Thm 0.0]{crs1} a global version of Artin's approximation theorem \cite{ar}, which implies the following:

\begin{thm}[\cite{crs1}]\label{crs1}
Let $Y\subset\R^n$ be a Nash set and let $X\subset\R^m$ be any compact Nash manifold. Then every real analytic map $f:X\to Y$ can be uniformly approximated by (real) Nash maps $g:X\to Y$.
\end{thm}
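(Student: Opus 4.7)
The plan is to reduce Theorem~\ref{crs1} to the global Artin-type approximation statement \cite[Thm 0.0]{crs1}, which is the deep input, and treat the rest as a translation from geometry to equations.

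First I would encode maps into $Y$ as solutions of a Nash equation. Since $Y\subset\R^n$ is a Nash set, by definition there exists a Nash function $h:\R^n\to\R$ with $Y=\{h=0\}$. Giving a real analytic map $f:X\to Y$ is then equivalent to giving an $n$-tuple $(f_1,\ldots,f_n)\in\mscr{C}^\omega(X)^n$ of real analytic functions on $X$ that is an analytic solution of the single scalar equation
\[
h(y_1,\ldots,y_n)=0,
\]
whose coefficients (the partial Taylor data of $h$) are Nash. Thus the geometric constraint ``$f$ takes values in $Y$'' becomes a single polynomial-algebraic analytic equation on the components of $f$.

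Second, I would invoke the global version of Artin's approximation theorem proved by Coste--Ruiz--Shiota on the compact Nash manifold $X$: every analytic solution over $X$ of a system of Nash equations can be approximated, uniformly on $X$, by Nash solutions of the same system. Applied to the equation $h=0$ and to the analytic solution $(f_1,\ldots,f_n)$, this produces Nash functions $g_1,\ldots,g_n\in\nash(X)$ with
\[
h(g_1(x),\ldots,g_n(x))=0 \quad\text{for every } x\in X,
\]
and with each $g_i$ as uniformly close to $f_i$ on the compact set $X$ as prescribed.

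Finally, assemble the conclusion. The map $g:=(g_1,\ldots,g_n):X\to\R^n$ has Nash components on the Nash manifold $X$, so it is Nash as a map; by construction its image lies in $\{h=0\}=Y$; and uniform closeness of components yields uniform closeness $g\to f$ as maps $X\to Y$. The only nontrivial ingredient is the global Artin approximation theorem itself, which is precisely what \cite[Thm 0.0]{crs1} provides; everything else is formal. The main obstacle, were one to try to reprove this from scratch, would be to produce a Nash solution from an analytic one in the presence of the global (as opposed to formal or local) constraint, which is what makes the underlying Artin-type theorem delicate and forces the compactness hypothesis on $X$.
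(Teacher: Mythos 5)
Note first that the paper itself does not prove Theorem~\ref{crs1}: it is stated as a quoted consequence of the global Artin-type approximation theorem \cite[Thm 0.0]{crs1}, and the text attributes the underlying proof to general N\'eron desingularization. There is therefore no proof in the paper to compare against; what you were reconstructing is the implication from \cite[Thm 0.0]{crs1}.

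Your strategy is right in spirit, but the implication is not ``formal'', and the gap sits precisely where you invoke the Artin theorem. Theorem~0.0 of Coste--Ruiz--Shiota is an Artin-approximation property for the ring extension $\nash(X)\subset\mscr{C}^\omega(X)$: analytic solutions of finite systems of \emph{polynomial} equations with coefficients in $\nash(X)$ can be approximated by Nash solutions. The equation $h(y_1,\dots,y_n)=0$ that you set up is not of this form, because $h$ is a Nash function of the unknowns $y_i$, algebraic over but not equal to a polynomial, so the theorem cannot be invoked as you state it. Passing to the Zariski closure $\ol{Y}^{\zar}$ (a genuine polynomial system) does not repair this, since a Nash set need not be open in its Zariski closure: with $h(y_1,y_2)=y_1^2+y_2^2-y_2\sqrt{y_2^2+1}$, the Nash set $Y=\{h=0\}$ and its mirror $Y'=\{y_1^2+y_2^2+y_2\sqrt{y_2^2+1}=0\}$ are tangent at the origin, which lies in $Y$, and $\ol{Y}^{\zar}=Y\cup Y'$, so a Nash map into $\ol{Y}^{\zar}$ uniformly close to an analytic map hitting the origin is not automatically pushed into $Y$. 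The correct bridge is the Artin--Mazur description of the Nash function $h$: there exist a nonsingular real algebraic set $V\subset\R^{n+k}$ and an open semialgebraic subset $\widetilde{V}\subset V$ such that the first projection restricted to $\widetilde{V}$ is a Nash diffeomorphism onto $\R^n$ with inverse $\sigma$, and $h=z_1\circ\sigma$ for a coordinate $z_1$. Then $\sigma\circ f:X\to V\cap\{z_1=0\}$ is an analytic solution of a bona fide polynomial system over $\nash(X)$, the approximation theorem produces a Nash $\widetilde{g}:X\to V\cap\{z_1=0\}$ close to $\sigma\circ f$, and since $X$ is compact and $\widetilde{V}\cap\{z_1=0\}$ is open in $V\cap\{z_1=0\}$, any $\widetilde{g}$ close enough takes values in $\widetilde{V}\cap\{z_1=0\}$; composing with the first projection yields the required Nash map $g:X\to Y$. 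Both the Artin--Mazur step and the compactness/openness selection are substantive ingredients absent from your sketch, and the latter is where the compactness of $X$ is used beyond its role inside \cite[Thm 0.0]{crs1} itself.
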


The proof of the previous theorem is based on a deep result on commutative algebra: the so-called general N\'eron desingularization, see the survey \cite{crs2} for further reference. Lempert proved in \cite{le} the counterpart of Theorem \ref{crs1} for the complex setting taking advantage again of the general N\'eron desingularization: 

\begin{thm}[\cite{le}]
Let $Y\subset\C^n$ be a complex algebraic set and let $X$ be any holomorphically convex compact subset $X$ of a complex algebraic subset of $\C^m$. Then every holomorphic map $f:X\to Y$ can be uniformly approximated by complex Nash maps $g:X\to Y$.
\end{thm}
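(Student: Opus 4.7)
The plan is to follow the strategy used by Coste--Ruiz--Shiota for the real Nash setting and adapt it to the complex algebraic category, replacing the real Artin approximation by its complex analogue. The core is to translate the approximation problem into a statement about solutions of systems of polynomial equations in a suitable pair of rings and then to invoke general N\'eron desingularization.

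First I would write $Y=\{Q_1=\cdots=Q_s=0\}$ with $Q_i\in\C[\y_1,\ldots,\y_n]$ and fix a complex algebraic set $Z\subset\C^m$ with $X\subset Z$. Holomorphic convexity of $X$ ensures that $f$ extends to a holomorphic map $F=(F_1,\ldots,F_n)$ defined on an open Stein neighborhood $U$ of $X$ in $Z$. Denote by $\Oo(X)$ the ring of holomorphic germs on $X$ (restrictions of maps holomorphic on open neighbourhoods of $X$ in $Z$) and by $\Nn(X)$ the corresponding ring of complex Nash germs. The tuple $(F_1|_X,\ldots,F_n|_X)\in\Oo(X)^n$ is a solution of the polynomial system $\mathbf{Q}(\mathbf{\y})=0$. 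The approximation statement becomes: every such solution in $\Oo(X)^n$ admits arbitrarily close solutions in $\Nn(X)^n$.

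The key tool is a complex-analytic general N\'eron desingularization applied to the ring extension $\Nn(X)\hookrightarrow\Oo(X)$. One shows this homomorphism is regular (faithfully flat with geometrically regular fibers), essentially because the two rings share the same formal completions at every analytic point of $X$ and because $X$ is holomorphically convex. N\'eron's theorem then asserts that any solution in $\Oo(X)^n$ can be approximated, with respect to an appropriate topology reflecting the $X$-uniform sup-norm, by solutions in $\Nn(X)^n$. Applying this to $(F_1|_X,\ldots,F_n|_X)$ yields Nash germs $g_1,\ldots,g_n$ with $Q_i(g_1,\ldots,g_n)=0$ on $X$ and $g_j$ uniformly close to $F_j|_X$, so that $g=(g_1,\ldots,g_n):X\to Y$ is the required Nash approximation.

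The main obstacle is establishing the regularity of $\Nn(X)\to\Oo(X)$ when $Z$ is singular and $X$ is merely compact and holomorphically convex. I would proceed in three steps: (i) a local analysis at each $x\in X$, comparing the Nash local ring $\Nn_{Z,x}$ with the analytic local ring $\Oo_{Z,x}$ and using that both have the same completion (a Henselization-type statement); (ii) a spreading argument by a finite covering of a neighbourhood of $X$ in $Z$ by Nash-definable Stein charts in which the Nash and holomorphic structure sheaves are comparable; and (iii) a flatness and geometric-regularity check of the resulting morphism of coherent sheaves. Once regularity is secured, converting the germ-level approximation of N\'eron into genuine uniform approximation on the compact $X$ is handled by Runge approximation inside $Z$, exploiting again the holomorphic convexity hypothesis.
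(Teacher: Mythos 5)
Note first that the paper does not prove this theorem: it is quoted as a result of Lempert \cite{le}, and the paper then points to \cite{bp} for a simpler, more geometric proof. Your sketch reconstructs Lempert's original strategy --- reduce the statement to regularity of the ring extension $\Nn(X)\hookrightarrow\Oo(X)$ and then invoke general N\'eron (Popescu) desingularization --- and at the level of the overall plan this is the right framework.

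There is nevertheless a genuine gap: the central claim of your steps (i)--(iii), namely that $\Nn(X)\to\Oo(X)$ is a regular homomorphism, is not established. This is precisely the main technical theorem of Lempert's paper and occupies most of it; it does not follow merely from the local observation that the Nash and analytic local rings share the same completion, because $\Nn(X)$ and $\Oo(X)$ are rings of germs along an entire compact set, not local rings, and passing from the local comparison to the global regularity statement is exactly where the difficulty lies (Lempert's argument runs through a chain of completions along analytic subsets and excellence-type results). The phrase ``spreading argument by Nash-definable Stein charts'' names the kind of idea one needs but is not a proof. A second, smaller gap sits in the final step: Popescu's theorem only yields a factorization of $\Nn(X)[\y_1,\ldots,\y_n]/(Q_1,\ldots,Q_s)\to\Oo(X)$ through a smooth finitely presented $\Nn(X)$-algebra $C$; turning this into genuine uniform approximation of $f$ on $X$ requires both Oka--Weil density of polynomial (hence Nash) functions in $\Oo(X)$ for the sup norm --- this is where the holomorphic convexity of $X$ actually enters --- and an implicit-function-theorem correction exploiting the smoothness of $C$ over $\Nn(X)$ to upgrade an approximate Nash section of $C$ to an exact one. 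Your closing appeal to ``Runge approximation'' only names the first of these two ingredients and omits the second.
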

In \cite{bp} the authors provide a simpler proof of the previous statement based on strategies with a more geometric flavor.

Our main results in this work (Theorems \ref{thm:main1} and~\ref{thm:main2}) are of a different nature. We show: if $Y\subset\R^n$ belongs to a wide class of triangulable sets including differentiable manifolds, polyhedra, semi\-algebraic sets, subanalytic sets and definable sets of an o-minimal structure, then $Y$ enjoys the following approximation property as target space: {\it Let $X\subset\R^m$ be any locally compact set. Then each continuous map $f:X\to Y$ can be approximated by arbitrarily close $\mscr{C}^r$ maps $g:X\to Y$ for either suitable positive integers $r$ or $r=\infty$, with respect to the strong $\mscr{C}^0$ topology of $\mscr{C}^0(X,Y)$}. As the set $\mscr{C}_*^0(X,Y)$ of proper maps between $X$ and $Y$ is an open subset of $\mscr{C}^0(X,Y)$ (see \cite[Ch.2., Thm.1.5]{hir3}), the above $\mscr{C}^r$ approximation property implies the `proper $\mscr{C}^r$ approximation target space property'. We will revisit the latter property when dealing with $C$-analytic sets in \S\ref{C} below.

The preceding approximation by $\mscr{C}^r$ maps is always possible if $Y\subset\R^n$ is a $\mscr{C}^r$ submanifold with boundary (even if it has corners) or if $Y$ is a locally compact polyhedron. Our proofs introduce new approximation strategies that make use of a variant of the general simplicial approximation theorem, a `shrink-widen' covering and approximation technique and $\mscr{C}^r$ weak retractions. Our constructions provide also certain relative versions of the preceding approximation results, that is, results of the following form: {\em if in addition $X'\subset X$ is non-empty and $f|_{X'}$ is a $\mscr{C}^r$ map, there exist arbitrarily close $\mscr{C}^r$ maps $g:X\to Y$ to $f$ with respect to the strong $\mscr{C}^0$ topology of $\mscr{C}^0(X,Y)$ such that $g|_{X'}=f|_{X'}$}. Examples \ref{surp} show that we have to be quite restrictive with the hypotheses about $X'$ (even if $Y$ is as simple as a $\mscr{C}^r$ manifold with non-empty boundary).

In the literature there are many celebrated results concerning the existence of obstructions to approximate homeomorphisms between differentiable manifolds by diffeomorphisms. This obstruction theory is a central topic in differential topology, which was mainly developed by names like Milnor, Thom, Munkres and Hirsch in the fifties and sixties \cite{hir1,hir2,mi1,mu1,mu2,mu3,th}. We refer the reader also to \cite{DP,HM,IKO,MP,Mu} for some recent developments. Additional obstructions where found by Milnor in \cite{mi2} when he constructed two homeomorphic compact polyhedra which are not PL homeomorphic. Our results state that there are no obstructions to approximate continuous maps $f:X\to Y$ by differentiable maps $g:X\to Y$ when $Y$ admits a `good' triangulation. Of course, one cannot expect that the approximating map $g$ is a diffeomorphism or a PL homeomorphism if the map $f$ we want to approximate is a homeomorphism. In fact, the approximating maps $g$ we construct in this paper are far from being injective (see Remark \ref{rem:constant1}). 

%%%

\subsection{Weakly $\mscr{C}^r$ triangulable sets}

We assume in the whole article that every subset of $\R^n$ is endowed with the relative Euclidean topology (where $n\in\N:=\{0,1,2,\ldots\}$).

Let $Y\subset\R^n$ be a (non-empty) set. We say that $Y$ is {\it triangulable} if it is homeomorphic to a locally compact polyhedron of some $\R^q$. A {\it locally compact polyhedron} of $\R^q$ is defined as the realization $|L|$ of a locally finite simplicial complex $L$ of $\R^q$. For related notions concerning simplicial complexes we refer the reader to \cite{hu,mu5}.

Let $X\subset\R^m$ be a (non-empty) locally compact set and let $\mscr{C}^0(X,Y)$ be the set of all continuous maps from $X$ to $Y$. We endow $\mscr{C}^0(X,Y)$ with the {\it strong (Whitney) $\mscr{C}^0$ topology}. A fundamental system of neighborhoods of $f\in\mscr{C}^0(X,Y)$ in such a topology is given by the sets 
\[
\mc{N}(f,\varepsilon)=\big\{g\in\mscr{C}^0(X,Y):\ \|g(x)-f(x)\|_n<\varepsilon(x) \;\; \forall x\in X\big\},
\] 
where $\|\cdot\|_n$ denotes the Euclidean norm of $\R^n$ and $\varepsilon:X\to\R^+:=\{t\in\R:\ t>0\}$ is a strictly positive continuous function on $X$.

Denote $\N^*:=\N\setminus\{0\}$ the set of all positive integers and fix $r\in\N^*\cup\{\infty\}$. A map $g:X\to Y$ is a {\it $\mscr{C}^r$ map} if there exist an open neighborhood $U\subset\R^m$ of $X$ (in which $X$ is closed) and a differentiable map $G:U\to\R^n$ of class $\mscr{C}^r$ (in the standard sense) such that $g(x)=G(x)$ for each $x\in X$. Denote $\mscr{C}^r(X,Y)$ the subset of $\mscr{C}^0(X,Y)$ of all $\mscr{C}^r$ maps from $X$ to $Y$.

\begin{define}
Let $r\in\N^*\cup\{\infty\}$. A triangulable set $Y\subset\R^n$ is a \emph{$\mscr{C}^r$-approximation target space} or a $\atsr$ for short if $\mscr{C}^r(X,Y)$ is dense in $\mscr{C}^0(X,Y)$ for each locally compact subset $X$ of each Euclidean space $\R^m$, where $m\in\N$ is any natural number.
\end{define}

If $Y\subset\R^n$ is a $\atsr$, it is triangulable, so by \cite[Cor.3.5]{hanner} it is an absolute neighborhood retract. This implies that if $X\subset\R^m$ is an arbitrary locally compact set, $f\in\mscr{C}^0(X,Y)$ and $g\in\mscr{C}^r(X,Y)$ is any close enough approximation of $f$, then $g$ is homotopic to $f$ (see \cite[Thm.4.1]{hanner}). Thus, close enough approximations of $f$ are also homotopic between them. In fact, $\mscr{C}^r(X,Y)$ is not only dense in $\mscr{C}^0(X,Y)$ but it is also `homotopically dense' in $\mscr{C}^0(X,Y)$ in the following sense: \em for each $f\in\mscr{C}^0(X,Y)$ and each strictly positive continuous function $\varepsilon:X\to\R^+$ there exists $g\in\mc{N}(f,\varepsilon)\cap\mscr{C}^r(X,Y)$ that is homotopic to $f$\em. 

A natural question consists of determining if \em homotopic maps of $\mscr{C}^r(X,Y)$ \em and \em $\mscr{C}^r$ homotopic maps of $\mscr{C}^r(X,Y)$ \em coincide. \em In case $X$ is a locally compact set and $Y$ is a $\mscr{C}^r$ submanifold with boundary, then homotopic maps of $\mscr{C}^r(X,Y)$ are also $\mscr{C}^r$ homotopic maps of $\mscr{C}^r(X,Y)$ \em (see \cite[Ch.III., Thm.8.3]{orr}). The proof of this result uses the following $\mscr{C}^r$ approximation result \cite[Ch.III., Thm.6.1]{orr} relative to a closed subset $X'$ of $X$ (when $Y\subset\R^n$ is a $\mscr{C}^r$ submanifold with boundary).

\begin{thm}[{\cite[Ch.III., Thm.6.1]{orr}}]\label{thm:ORR}
Let $X\subset\R^m$ be a locally compact set, let $r\in\N^*\cup\{\infty\}$ and let $Y\subset\R^n$ be a $\mscr{C}^r$ submanifold with boundary. Let $X'\subset X$ be a closed set and let $f:X\to Y$ be a continuous map such that $f|_W$ is a $\mscr{C}^r$ map for some neighborhood $W\subset X$ of $X'$. Then there exists $g\in\mscr{C}^r(X,Y)$ arbitrarily close to $f$ in the strong $\mscr{C}^0$ topology such that $g$ coincides with $f$ in a neighborhood of $X'$ in $X$. 
\end{thm}

In this work we analyze when a triangulable set $Y\subset\R^n$ is a $\atsr$. Classical examples of $\atsr$ are the $\mscr{C}^r$ submanifolds of Euclidean spaces. Indeed, if $Y$ is any $\mscr{C}^r$ submanifold of~$\R^n$, then it is triangulable ($\mscr{C}^r$ triangulable indeed) by Cairns-Whitehead's triangulation theorem \cite{ca}. In addition, by Whitney's approximation theorem one can approximate each continuous map $f:X\to Y$ by an arbitrarily close $\mscr{C}^r$ map $g^*:X\to\R^n$ and then one can use a $\mscr{C}^r$ tubular neighborhood $\rho:U\to Y$ of $Y$ in $\R^n$ to define the $\mscr{C}^r$ map $g:X\to Y,\ x\mapsto g(x)=\rho(g^*(x))$ arbitrarily close to $f$ in the strong $\mscr{C}^0$ topology of $\mscr{C}^0(X,Y)$. 

%%%%%%%

If $Y\subset\R^n$ is an arbitrary triangulable set, a serious difficulty arises: $Y$ does not have $\mscr{C}^r$ tubular neighborhoods in $\R^n$ (recall that $r\geq 1$)! An easy counterexample is the $\mscr{C}^r$ triangulable set $Y:=\{xy=0\}\subset\R^2$. If there were a $\mscr{C}^1$ retraction $\rho:U\to Y$, then $d_0\rho$ would be the identity on $\R^2$ and the origin should be an interior point of $Y$ by the inverse function theorem, which is a contradiction. In fact, (boundaryless) $\mscr{C}^r$ submanifolds of Euclidean spaces can be characterized by the existence of $\mscr{C}^r$ retractions, namely: {\it if a subset of an Euclidean space can be covered by local $\mscr{C}^r$ retractions, it is a (boundaryless) $\mscr{C}^r$ submanifold} (see \cite[Thm.1.15]{Mic}).

In order to overcome this problem concerning the lack of $\mscr{C}^r$ tubular neighborhoods (when $Y$ is not a boundaryless $\mscr{C}^r$ submanifold of $\R^n$), we introduce the key concept of weakly $\mscr{C}^r$ triangulable set.

{\it Fix $r\in\N^*\cup\{\infty\}$ for a while.}

\begin{define}
We say that $Y\subset\R^n$ is \emph{weakly $\mscr{C}^r$ triangulable} if there exist a locally finite simplicial complex $L$ of some $\R^q$ and a homeomorphism $\Psi:|L|\to Y$ such that the restriction $\Psi|_\xi:\xi\to Y$ is a $\mscr{C}^r$ map for each simplex $\xi\in L$. 
\end{define}

Our first main result reads as follows. 

\begin{thm}\label{thm:main1}
Every weakly $\mscr{C}^r$ triangulable set is a $\atsr$.
\end{thm}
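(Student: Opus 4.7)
The plan is to reduce the problem to approximating maps into the polyhedron $|L|$ and then combine the three tools advertised in the introduction: a relative simplicial approximation, a shrink-widen covering, and weak $\mscr{C}^r$ retractions.

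First I reduce to the polyhedral target. Given $f\in\mscr{C}^0(X,Y)$ and a strictly positive continuous $\veps:X\to\R^+$, I look at $h:=\Psi^{-1}\circ f:X\to |L|$ and aim to produce $g:X\to|L|$ with $\Psi\circ g\in\mscr{C}^r(X,\R^n)$ and $\|\Psi(g(x))-f(x)\|_n<\veps(x)$ for every $x\in X$. Uniform continuity of $\Psi$ on compact pieces converts the target tolerance $\veps$ into a source-side tolerance $\delta:X\to\R^+$ on the approximation of $h$. Since $\Psi$ is only piecewise $\mscr{C}^r$, this reduction forces $g$ to be strongly compatible with the simplicial structure of $L$: each value $g(x)$ must lie in a closed simplex $\sigma(x)\in L$ chosen so that the combinatorial assignment $x\mapsto\sigma(x)$ varies in a way that lets $\Psi\circ g$ be $\mscr{C}^r$ across $X$.

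Second, I build the combinatorial backbone. Exploiting that $X\subset\R^m$ is locally compact and that $|L|$ is a locally compact polyhedron, I apply the announced variant of Zeeman's relative simplicial approximation theorem, after sufficiently subdividing $L$, to obtain a locally finite closed cover $\{C_\sigma\}_{\sigma\in L}$ of $X$ such that $h(C_\sigma)$ lies in $\sigma$ and such that different $C_\sigma$ overlap only along patterns prescribed by the face relations of $L$. The relative character of the theorem is used to preserve $h$ on regions where it is already regular and to keep the cover locally finite in the strong $\mscr{C}^0$ sense that the tolerance $\delta$ imposes.

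Third, I carry out the shrink-widen step. For every $\sigma\in L$ I fix a weak $\mscr{C}^r$ retraction $\rho_\sigma$ from an open neighborhood $U_\sigma\subset\R^q$ of a slight widening of $\sigma$ onto $\sigma$, choosing these retractions coherently on common faces. I simultaneously shrink each $C_\sigma$ to a closed $C_\sigma'\subset\operatorname{Int}_X C_\sigma$, pick a $\mscr{C}^r$ partition of unity $\{\theta_\sigma\}$ subordinate to the widened open cover, approximate $h$ on each widened piece by a $\mscr{C}^r$ map $h_\sigma$ into $U_\sigma$ via standard Whitney approximation in the ambient $\R^q$, and set
\[
g(x)=\rho_{\sigma(x)}\!\Bigl(\sum_{\tau}\theta_\tau(x)\,h_\tau(x)\Bigr),
\]
where $\sigma(x)$ is the minimal simplex among those $\tau$ with $\theta_\tau(x)\neq 0$. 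The shrink-widen calibration guarantees that the convex combination stays inside $U_{\sigma(x)}$ and that only mutually compatible retractions meet at any given point, so that $\Psi\circ g$ extends to a genuine $\mscr{C}^r$ map on an open neighborhood of $X$ in $\R^m$.

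The main obstacle will be the third step: producing the weak $\mscr{C}^r$ retractions $\rho_\sigma$ coherently along common faces, and calibrating the shrink-widen radii together with the finenesses of the subdivision of $L$ and of the partition of unity so that the convex combinations never escape the domain of the relevant retraction while the total error stays below $\delta$ (hence below $\veps$ after composing with $\Psi$). Once these ingredients are aligned with the combinatorial data from step two, the closeness estimate follows from standard strong-$\mscr{C}^0$ propagation, and density of $\mscr{C}^r(X,Y)$ in $\mscr{C}^0(X,Y)$ is established, proving that $Y$ is a $\atsr$.
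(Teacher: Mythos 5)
Your plan correctly names the three advertised tools, but it assembles them in a way that differs materially from the paper's proof and leaves real gaps, concentrated precisely where you flag ``the main obstacle.''

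The first gap is structural. You never triangulate the \emph{source} $X$, yet the weakly simplicial approximation theorem (Theorem~\ref{thm:zeeman}) takes as input a continuous map $F:|K|\to|L|$ between realizations of simplicial complexes; it cannot be applied directly to a map $h:X\to|L|$ on an arbitrary locally compact $X$, nor does it output a closed cover of $X$ indexed by simplices of $L$. The paper handles this by first extending $f$ to an open neighborhood $U\supset X$ (Tietze plus the fact that $Y$ is an ANR) and renaming $U$ as $X$. Then $X$ is a $\mscr{C}^\infty$ manifold and Cairns--Whitehead provides a locally finite complex $K$ and a homeomorphism $\Phi:|K|\to X$ that is a $\mscr{C}^\infty$ diffeomorphism on each open simplex. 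Only after this does Zeeman's variant get applied, to $F:=\Psi^{-1}\circ f\circ\Phi:|K|\to|L|$, and the resulting covering of $X$ is by widenings of the \emph{source} simplices $\sft=\Phi(\sigma)$, not by preimages of simplices of $L$.

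The second gap is the location and nature of the retractions. You place ``weak $\mscr{C}^r$ retractions'' $\rho_\sigma$ on the target side, onto closed simplices $\sigma\subset\R^q$ of $L$, and you acknowledge that making them compatible along common faces while calibrating the shrink-widen radii is unresolved. That difficulty is genuine: a closed simplex has corners, so there is no $\mscr{C}^r$ retraction of a neighborhood onto $\sigma$ fixing $\sigma$ (the same argument the paper gives for $\{xy=0\}$ applies to the boundary of a simplex), and replacing these by ``weak'' retractions that do not fix $\sigma$ breaks the argument that $\Psi\circ g$ is $\mscr{C}^r$, since $\Psi$ is only piecewise $\mscr{C}^r$. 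The paper avoids all of this by taking the retractions on the \emph{source} side: $r_{\sfs^0}:U_{\sfs^0}\to V_{\sfs^0}\subset\sfs^0$ are genuine $\mscr{C}^r$ tubular retractions onto (shrinkings of) the \emph{open} simplices $\sfs^0$ of the Cairns--Whitehead triangulation, which are honest $\mscr{C}^\infty$ submanifolds, so no corner problem arises. Because the weakly simplicial $g^*=F^*\circ\Phi^{-1}$ is $\mscr{C}^r$ on each $\sfs^0$ and sends each closed source simplex $\sft$ into a single closed target simplex $\xi_\sft\in L$, the approximation
\[
h^*(x)=\sum_{\sfs^0}\theta_{\sfs^0}(x)\,g^*\big(r_{\sfs^0}(x)\big)
\]
is already a convex combination of points of $\xi_\sft$ for $x$ near $\sft$, hence lies in $\xi_\sft$ by convexity. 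No retraction back onto $|L|$ or onto any $\sigma$ is needed; the convexity of the closed target simplex replaces your $\rho_{\sigma(x)}$. Your version, by contrast, Whitney-approximates $h$ into $\R^q$ first (so the values leave $|L|$) and then must project back, which is exactly the step that cannot be carried out $\mscr{C}^r$-coherently across faces.

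In short: the missing ideas are (i) the initial replacement of $X$ by an open neighborhood so that $X$ becomes a $\mscr{C}^\infty$ manifold and can be triangulated compatibly with the approximation problem, and (ii) performing the retraction step on the source triangulation and letting the convexity of closed simplices of $L$ do the rest, rather than attempting corner-ridden retractions onto closed target simplices.
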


A natural matter that arises from the preceding statement is to reveal large and relevant fami\-lies of weakly $\mscr{C}^r$ triangulable sets. A first relevant example is the collection of $\mscr{C}^r$ submanifolds with boundary of $\R^n$ (also treated in \cite[Ch.III., Thm.6.1]{orr}):

\begin{cor}
Every $\mscr{C}^r$ submanifold with boundary of $\R^n$ is a $\atsr$.
\end{cor}

Another important example is given by the family of locally compact polyhedra itself. Indeed, each locally compact polyhedron is weakly $\mscr{C}^\infty$ triangulable by definition. Consequently:

\begin{cor}\label{cor:pl}
Every locally compact polyhedron is a $\atsinfty$. 
\end{cor}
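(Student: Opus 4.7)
The plan is to deduce Corollary \ref{cor:pl} directly from Theorem \ref{thm:main1} by verifying that every locally compact polyhedron is, tautologically, weakly $\mscr{C}^\infty$ triangulable. So the whole task reduces to unpacking the definitions and checking one triviality; the substantive content sits in Theorem \ref{thm:main1}, which I am entitled to invoke as stated.

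More concretely, let $Y\subset\R^n$ be a locally compact polyhedron. By the definition recalled in the excerpt, $Y=|L|$ for some locally finite simplicial complex $L$ of $\R^n$. To exhibit the structure required by the definition of weak $\mscr{C}^\infty$ triangulability, I take the same simplicial complex $L$ (sitting in $\R^n$, so $q=n$) and the homeomorphism $\Psi:=\id_Y:|L|\to Y$. For every simplex $\xi\in L$, the restriction $\Psi|_\xi:\xi\to Y\subset\R^n$ is simply the inclusion $\xi\hookrightarrow\R^n$, which is the restriction to $\xi$ of the identity map $\R^n\to\R^n$. The identity is of class $\mscr{C}^\infty$ on an open neighborhood of $\xi$ in $\R^n$ (namely $\R^n$ itself), so $\Psi|_\xi$ qualifies as a $\mscr{C}^\infty$ map in the sense of the paper. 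Hence $Y$ is weakly $\mscr{C}^\infty$ triangulable.

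Applying Theorem \ref{thm:main1} with $r=\infty$ then yields that $Y$ is a $\atsinfty$, which is exactly the statement of the corollary. There is no genuine obstacle here: the only thing one has to be a bit careful about is that the definition of a $\mscr{C}^r$ map from a subset of $\R^m$ to $\R^n$ requires extendability to an open neighborhood in the ambient Euclidean space, and this is automatic for the identity. Consequently, no additional smoothing of the simplicial structure of $Y$ is needed, and the full strength $r=\infty$ is obtained at once rather than just some finite $r$.
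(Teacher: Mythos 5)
Your proof is correct and coincides with the paper's: the paper also deduces Corollary~\ref{cor:pl} from Theorem~\ref{thm:main1} by observing that a locally compact polyhedron $|L|$ is weakly $\mscr{C}^\infty$ triangulable ``by definition,'' which is exactly the triviality you unpack (take $\Psi=\id_{|L|}$, so each $\Psi|_\xi$ extends to the identity on $\R^n$). No gaps.
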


Let $Y\subset\R^n$ be a triangulable set, let $L$ be a locally finite simplicial complex of some $\R^q$ and let $\Psi:|L|\to Y$ be a homeomorphism between the realization $|L|$ of $L$ and $Y$. Given $w\in L$ the \em star $\mr{St}(w,L)$ of $w$ in $L$ \em is the union of the interiors of those simplices of $L$ that have $w$ as a vertex \cite[\S2, p.11]{mu5}. Recall that the interior of a simplex $\sigma$ is defined as $\sigma$ with its proper faces removed \cite[\S1, p.5]{mu5}.

Recall that $Y\subset\R^n$ is said to be {\it $\mscr{C}^r$ triangulable} if there exists a homeomorphism $\Psi:|L|\to Y$ between the realization $|L|$ of a locally finite simplicial complex of some $\R^q$ and $Y$ such that: 
\begin{itemize}
\item the restriction $\Psi|_\xi:\xi\to Y$ is a $\mscr{C}^r$ map for each simplex $\xi$ of $L$ and 
\item the map $d\Psi_w:\mr{St}(w,L)\to\R^n,\ y\mapsto d(\Psi|_\xi)_w(y-w)$ (where $\xi$ is a simplex of $L$ such that $y\in\xi$) is a homeomorphism onto its image for each $w\in L$.
\end{itemize}
Every $\mscr{C}^r$ triangulable set is weakly $\mscr{C}^r$ triangulable by definition. For further reference concerning {\it $\mscr{C}^r$ triangulations} see for instance \cite{ca}, \cite[\S II.8]{mu4} and \cite[\S.I.3, pg. 72-94]{sh1}.

A set $Y\subset\R^n$ is called \em locally $\mscr{C}^r$ equivalent to a polyhedron\em, or {\em locally $\mscr{C}^r$~polyhedral} for short, if for each point $x\in Y$ there exist two open neighborhoods $U$ and $V$ of $x$ in $\R^n$, a $\mscr{C}^r$ diffeomorphism $\phi:U\to V$ and a locally compact polyhedron $P$ of $\R^n$ such that $x\in P$, $\phi(x)=x$ and $\phi(U\cap Y)=V\cap P$. In \cite[Prop.I.3.13]{sh1} Shiota proved that every locally $\mscr{C}^r$ polyhedral set is $\mscr{C}^r$ triangulable. We deduce:

\begin{cor}
Every $\mscr{C}^r$ triangulable set is a $\atsr$. In particular, every locally $\mscr{C}^r$~polyhedral set is a $\atsr$. 
\end{cor}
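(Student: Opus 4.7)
The corollary is a purely formal consequence of Theorem \ref{thm:main1} together with two observations that the excerpt has already prepared. My plan is simply to chain these together, with no new geometric or analytic work.

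For the first assertion, let $Y\subset\R^n$ be a $\mscr{C}^r$ triangulable set and let $\Psi:|L|\to Y$ be a triangulation witnessing this, where $L$ is a locally finite simplicial complex of some $\R^q$. By the very definition of a $\mscr{C}^r$ triangulation recalled in the excerpt, the map $\Psi|_\xi:\xi\to Y$ is of class $\mscr{C}^r$ for each simplex $\xi\in L$ (the additional condition that each $d\Psi_w$ be a homeomorphism onto its image is not needed here). But this is exactly the defining property of a weakly $\mscr{C}^r$ triangulable set. Consequently $Y$ is weakly $\mscr{C}^r$ triangulable, and Theorem \ref{thm:main1} implies that $Y$ is a $\atsr$.

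For the second assertion, I would invoke the result of Shiota cited in the excerpt just before the corollary: \cite[Prop.I.3.13]{sh1} asserts that every locally $\mscr{C}^r$ polyhedral set admits a $\mscr{C}^r$ triangulation. Combining this with the first assertion gives immediately that every locally $\mscr{C}^r$ polyhedral set is a $\atsr$.

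There is no genuine obstacle here; both implications are explicitly signposted in the text preceding the corollary (``Every $\mscr{C}^r$ triangulable set is weakly $\mscr{C}^r$ triangulable by definition'' and ``Shiota proved that every locally $\mscr{C}^r$ polyhedral set is $\mscr{C}^r$ triangulable''). The only verification one should make is the trivial bookkeeping check that the two conditions defining $\mscr{C}^r$ triangulability strictly include the single condition defining weak $\mscr{C}^r$ triangulability, so that the hypothesis of Theorem \ref{thm:main1} is indeed satisfied.
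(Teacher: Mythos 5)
Your proof is correct and is exactly the argument the paper intends: the first assertion is Theorem \ref{thm:main1} applied after noting that the defining conditions of $\mscr{C}^r$ triangulability strictly include those of weak $\mscr{C}^r$ triangulability, and the second follows by composing with Shiota's result that locally $\mscr{C}^r$ polyhedral sets are $\mscr{C}^r$ triangulable.
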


A celebrated family of locally $\mscr{C}^r$ polyhedral sets is the collection of $\mscr{C}^r$ submanifolds with corners of Euclidean spaces, which includes the above mentioned collection of $\mscr{C}^r$ submanifolds with boundary of Euclidean spaces. A subset $Y\subset\R^n$ is a {\it $\mscr{C}^r$ submanifold with corners} of dimension $d$ if for each point $x\in Y$ there exist an integer $k\in\{0,\ldots,d\}$ and open neighborhoods $U\subset\R^n$ of $x$ and $V\subset\R^n$ of the origin together with a $\mscr{C}^r$ diffeomorphism $\varphi:U\to V$ such that $\varphi(U\cap Y)=V\cap\{x_1\geq0,\ldots,x_k\geq0,x_{d+1}=0,\ldots,x_n=0\}\subset\R^n$, see \cite{jo,me1,me2}:

\begin{cor}
Every $\mscr{C}^r$ submanifold with corners of $\R^n$ is a $\atsr$.
\end{cor} 

Another well-known family of locally $\mscr{C}^r$ polyhedral sets arises when considering subsets $Y$ of a $\mscr{C}^r$ submanifold $M$ of dimension $d$ of some $\R^n$ with the following property: \em for each point $x\in Y$ there exists an open neighborhood $W\subset M$ of $x$ endowed with a $\mscr{C}^r$ diffeomorphism $\psi:W\to\R^p$ that maps $x$ to the origin and satisfies that $\psi(Y\cap W)$ is a union of coordinate vector subspaces of $\R^p$\em. Inside the preceding family appears unions of locally finite families of $\mscr{C}^r$ submanifolds of $M$ that meet transversally (in the preceding sense). Sets obtained in this way are called {\it sets with (only) $\mscr{C}^r$ monomial singularities} \cite{bfr,fgr}. A particular case concerns {\it $\mscr{C}^r$ normal-crossing divisors}, that is, unions of locally finite families of $\mscr{C}^r$ hypersurfaces of $M$ that meet transversally.

A very relevant class of triangulable sets is certainly the one of subanalytic sets, which includes semialgebraic sets. See \cite{bcr,bm,sh1} for basic facts concerning the geometry of these sets. Let us recall the main definitions.

A set $Y\subset\R^n$ is {\it semialgebraic} if it admits a description as a finite Boolean combination of polynomial equalities and inequalities. The set $Y$ is called {\it locally semialgebraic} if the intersection $Y \cap B$ is semialgebraic for each compact ball $B$ of $\R^n$.

Let $U\subset\R^n$ be a (non-empty) open set. A set $Y\subset U$ is {\it analytic} if for each point $x\in U$ there exists an open neighborhood $V\subset U$ of $x$ such that $Y\cap V=\{f_1=0,\ldots,f_r=0\}=\{f_1^2+\cdots+f_r^2=0\}$ for some real analytic functions $f_i\in\an(V,\R)$. More generally, a set $Y\subset U$ is {\it semianalytic} if for each point $x\in U$ there exists an open neighborhood $V\subset U$ of $x$ such that $Y\cap V$ is a finite Boolean combination of real analytic equalities and inequalities on $V$. The subanalytic sets are roughly speaking the images of semianalytic sets under proper real analytic maps. More precisely, $Y\subset U$ is a {\it subanalytic set} if there exist an open subset $W$ of some $\R^p$, a real analytic map $f:W\to U$ and a semianalytic set $T\subset W$ such that the restriction $f|_{\cl_W(T)}:\cl_W(T)\to U$ is proper and $f(T)=Y$. Here $\cl_W(T)$ is the closure of $T$ in $W$. Locally semialgebraic sets are semianalytic and hence subanalytic. 

The Hironaka-Hardt triangulation theorem \cite{hi1,hardt} asserts that each locally finite family ${\mathfrak Y}:=\{Y_i\}_{i\in I}$ of subanalytic subsets of $\R^n$ is `$\mscr{C}^\omega$ triangulable on open simplices' in the following sense \cite[Thms.1 \& 2]{hardt}: \em there exist a locally finite simplicial complex $L$ of some $\R^q$ and a homeomorphism $\Psi:|L|\to\R^n$ such that: 
\begin{itemize}
\item the image $\Psi(\xi^0)$ of each open simplex $\xi^0$ of $L$ is a $\mscr{C}^\omega$ submanifold of $\R^n$,
\item each restriction $\Psi|_{\xi^0}:\xi^0\to\Psi(\xi^0)$ is a $\mscr{C}^\omega$ diffeomorphism. 
\item each subanalytic set $Y_i$ is the (disjoint) union of some of the images $\Psi(\xi^0)$.
\end{itemize}\em
Unfortunately, this result does not ensure that a subanalytic subset $Y$ of $\R^n$ is weakly $\mscr{C}^r$ triangulable for some $r\in\N^*\cup\{\infty\}$.

The weakly $\mscr{C}^r$ triangulability of semialgebraic and subanalytic sets is not yet known for $r\geq2$. However, the situation is completely different for $r=1$. Indeed, in \cite{os} the authors have proved recently that every locally compact locally semialgebraic set $Y$ has a triangulating homeomorphism $\Psi:|L|\to Y$ such that $\Psi\in\mscr{C}^1(|L|,Y)$. In particular, $Y$ is weakly $\mscr{C}^1$ triangulable. See \cite{cp} for further information concerning the regularity of $\Psi$.

As it is commented by the authors of \cite{os} in the first paragraph of the introduction, it is straightforward to check that the techniques developed in \cite{os} extend to the subanalytic case. It turns out that locally compact subanalytic sets are weakly $\mscr{C}^1$ triangulable as well. We deduce:

\begin{cor} \label{cor:1ats}
Every locally compact subanalytic set is a $\atsone$. In particular, each locally compact locally semialgebraic set is a $\atsone$.
\end{cor}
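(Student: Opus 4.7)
The plan is to deduce the corollary from Theorem \ref{thm:main1} applied with $r=1$, once we have shown that every locally compact subanalytic set is weakly $\mscr{C}^1$ triangulable. The logical skeleton is short: (i) establish weakly $\mscr{C}^1$ triangulability for locally compact subanalytic sets; (ii) invoke Theorem \ref{thm:main1} as a black box to promote this to the $\atsone$ property; (iii) observe that locally compact locally semialgebraic sets are locally compact subanalytic, so the second assertion follows from the first.

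The substantive content lies entirely in step (i). For the locally compact locally semialgebraic case this is not something I need to reprove: it is exactly the main theorem of \cite{os}, which constructs a triangulating homeomorphism $\Psi\colon |L|\to Y$ with $\Psi\in\mscr{C}^1(|L|,Y)$; the simplexwise $\mscr{C}^1$ regularity required in the definition of weakly $\mscr{C}^1$ triangulable set is then immediate, since $\Psi|_\xi$ is $\mscr{C}^1$ for every simplex $\xi\in L$. For the locally compact subanalytic case I would follow the explicit remark of the authors of \cite{os} reported in the paragraph preceding the corollary: inspect their construction step by step and replace the semialgebraic ingredients with their subanalytic analogues. Concretely, the starting partition into smooth cells will be provided by Hironaka--Hardt triangulations \cite{hi1,hardt} rather than by the semialgebraic triangulation theorem, and semialgebraic bump/partition-of-unity arguments used to smooth $\Psi$ across the codimension-one skeleton can be carried out in the subanalytic category, since the class of subanalytic sets is closed under the relevant geometric operations (local finiteness, intersections with balls, composition with proper analytic maps, cell decomposition). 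The local compactness hypothesis is used precisely as in \cite{os} to ensure the simplicial complex $L$ can be taken locally finite.

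Once step (i) is in place, step (ii) is a one-line application of Theorem \ref{thm:main1} with $r=1$, which outputs the density of $\mscr{C}^1(X,Y)$ in $\mscr{C}^0(X,Y)$ in the strong Whitney topology for every locally compact $X\subset\R^m$. The main obstacle is therefore not in the use of Theorem \ref{thm:main1} — which is a pure black box here — but in checking that the $\mscr{C}^1$ triangulation construction of \cite{os} is genuinely insensitive to enlarging the category from locally semialgebraic to subanalytic. This is a verification task rather than the introduction of a new idea, and the authors of \cite{os} have already signalled that their method is robust enough to accommodate it.
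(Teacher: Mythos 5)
Your proposal is correct and follows exactly the route taken in the paper: invoke the result of Ohmoto--Shiota \cite{os} that locally compact locally semialgebraic sets admit a $\mscr{C}^1$ triangulating homeomorphism (hence are weakly $\mscr{C}^1$ triangulable), note as the authors of \cite{os} themselves observe that the same construction carries over to the subanalytic case, and then apply Theorem \ref{thm:main1} with $r=1$. The only difference is that you spell out slightly more of what ``extending the techniques of \cite{os}'' would involve (Hironaka--Hardt in place of semialgebraic triangulation, subanalytic partitions of unity), but the logical skeleton and the key citations are the same as in the paper.
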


Let us recall next the definition of o-minimal structure.
\begin{define} \label{def:o-minimal}
An \em o-minimal structure \em (on the field $\R$) is a collection ${\mathfrak S}:=\{{\mathfrak S}_n\}_{n\in\N^*}$ of
families of subsets of $\R^n$ satisfying:
\begin{itemize}
\item ${\mathfrak S}_n$ contains all the algebraic subsets of $\R^n$.
\item ${\mathfrak S}_n$ is a Boolean algebra.
\item If $A\in{\mathfrak S}_m$ and $B\in{\mathfrak S}_n$, then $A\times B\in{\mathfrak S}_{m+n}$.
\item If $\pi:\R^n\times\R\to\R^n$ is the natural projection and $A\in{\mathfrak S}_{n+1}$, then $\pi(A)\in{\mathfrak S}_n$.
\item ${\mathfrak S}_1$ consists precisely of all the finite unions of points and intervals of any type.
\end{itemize}
\end{define}
The elements of $\bigcup_{n\in\N^*}{\mathfrak S}_n$ are called \em definable sets of ${\mathfrak S}$\em. As a consequence of Tarski-Seidenberg theorem, semialgebraic sets constitute an o-minimal structure, which is in fact contained in each o-minimal structure. The collection of `global' subanalytic sets is precisely the collection of definable sets in the remarkable o-minimal structure $\R_{\rm an}$, see \cite{wi}. We refer the reader to \cite{vd2,vm} for further information on the celebrated theory of o-minimal structures. As in the semialgebraic case, we say that a set $Y\subset\R^n$ is a {\it locally definable set of ${\mathfrak S}$} if the intersection $Y \cap B$ is a definable set of $\mathfrak{S}$ for each compact ball $B$ of $\R^n$. 

Also in the o-minimal setting it is straightforward to adapt the constructions developed in \cite{os,cp} to show that every locally compact locally definable sets of any o-minimal structure is weakly $\mscr{C}^1$ triangulable (see the first paragraph of the introduction of \cite{os}). We deduce the following extension of Corollary \ref{cor:1ats}:

\begin{cor}
Every locally compact locally definable set of an arbitrary o-minimal structure is a $\atsone$.
\end{cor}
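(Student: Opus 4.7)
The plan is to reduce the corollary directly to Theorem \ref{thm:main1} by proving that every locally compact locally definable set $Y\subset\R^n$ of an arbitrary o-minimal structure $\mathfrak{S}$ is weakly $\mscr{C}^1$ triangulable. Once this is established, Theorem \ref{thm:main1} applied with $r=1$ immediately yields the conclusion. Thus the whole work lies in producing a locally finite simplicial complex $L$ of some $\R^q$ together with a homeomorphism $\Psi:|L|\to Y$ such that $\Psi|_\xi\in\mscr{C}^1(\xi,Y)$ for each simplex $\xi\in L$, just as in the definition of weak $\mscr{C}^1$ triangulability.

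The strategy for producing $\Psi$ is to transcribe the construction of \cite{os} (with the refinements of \cite{cp}), originally written for locally compact locally semialgebraic sets, into the o-minimal setting. The starting point is the standard definable triangulation theorem in $\mathfrak{S}$ (the o-minimal analogue of Hironaka-Hardt), which provides a locally finite simplicial complex $L$ and a definable homeomorphism $\Psi_0:|L|\to Y$ whose restrictions to the open simplices $\xi^0$ are definable $\mscr{C}^1$ diffeomorphisms onto $\mscr{C}^1$ definable submanifolds of $\R^n$. The difficulty is that, a priori, the derivatives of $\Psi_0|_{\xi^0}$ may blow up as one approaches a proper face of $\xi$, so $\Psi_0$ need not be $\mscr{C}^1$ on the closed simplex $\xi$. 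This is precisely the defect corrected in \cite{os}: one proceeds by induction on the skeleta of $L$, and modifies $\Psi_0$ simplex by simplex, using $\mscr{C}^1$ cell decomposition, definable choice, definable Lojasiewicz-type inequalities, and definable bump functions in order to match the derivatives along faces, thereby producing a new homeomorphism $\Psi:|L|\to Y$ with the same image, the same incidence combinatorics, and $\mscr{C}^1$ restriction to every closed simplex.

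The main obstacle is not conceptual but technical: one must verify that every ingredient of the argument in \cite{os,cp} admits a counterpart in an arbitrary o-minimal structure. All the tame tools used there---cell decomposition into $\mscr{C}^1$ cells, definable choice, the Lojasiewicz inequality for definable continuous functions, definable trivialization over cells, and the existence of definable $\mscr{C}^1$ partitions of unity subordinated to a locally finite definable open cover---are available in full generality in any o-minimal structure (see \cite{vd2,vm}). As explicitly noted in the introduction of \cite{os}, the adaptation is then routine and requires no new ideas beyond a careful replacement of semialgebraic arguments by their o-minimal analogues. Combining the resulting weak $\mscr{C}^1$ triangulation of $Y$ with Theorem \ref{thm:main1} completes the proof.
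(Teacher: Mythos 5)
Your proposal is correct and follows exactly the paper's line of reasoning: both reduce the corollary to Theorem~\ref{thm:main1} by asserting that the constructions of \cite{os,cp} adapt to an arbitrary o-minimal structure to yield weak $\mscr{C}^1$ triangulability of locally compact locally definable sets. The only difference is that you spell out somewhat more of the o-minimal machinery involved in the adaptation, whereas the paper simply cites the first paragraph of the introduction of \cite{os} for the same claim.
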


%%%

\subsection{$C$-analytic sets}\label{C}

Now, we focus on a quite significant subclass of subanalytic sets, the one of $C$-analytic sets (also known as global analytic sets \cite{c}). We do not know if $C$-analytic sets are weakly $\mscr{C}^r$ triangulable for some $r\geq2$, but we develop an alternative approximation strategy to prove in Theorem \ref{thm:main2} an analogous result to Theorem \ref{thm:main1} (with $r=\infty$) under the additional assumption that the involved maps are proper.

Let $U\subset\R^n$ be a (non-empty) open set. A set $Y\subset U$ is said to be a {\it $C$-analytic subset of $U$} if there exist finitely many global real analytic functions $f_1,\ldots,f_r\in\an(U,\R)$ such that 
\[
Y=\{f_1=0,\ldots,f_r=0\}=\{f_1^2+\cdots+f_r^2=0\}.
\]
By the term {\it $C$-analytic set} we mean a $C$-analytic subset of an open subset of some $\R^n$. Real algebraic sets and Nash sets are particular examples of $C$-analytic sets.

Let $X\subset\R^m$ be a locally compact set, let $Y\subset\R^n$ be a set and let $\mscr{C}^0_*(X,Y)$ be the set of all {\it proper} continuous maps from $X$ to $Y$ endowed with the relative topology inherited from the strong $\mscr{C}^0$ topology of $\mscr{C}^0(X,Y)$. Given $r\in\N^*\cup\{\infty\}$, we set $\mscr{C}^r_*(X,Y):=\mscr{C}^r(X,Y)\cap\mscr{C}^0_*(X,Y)$.

\begin{define}
Let $r\in\N^*\cup\{\infty\}$. A triangulable set $Y\subset\R^n$ is a \emph{$\mscr{C}^r_*$-approximation target space} or a $\atsrp$ for short if $\mscr{C}^r_*(X,Y)$ is dense in $\mscr{C}^0_*(X,Y)$ for each locally compact subset $X$ of each Euclidean space $\R^m$, where $m\in\N$ is any natural number.
\end{define}

Our second main result reads as follows.

\begin{thm} \label{thm:main2}
Every $C$-analytic set is a $\atsinftyp$.
\end{thm}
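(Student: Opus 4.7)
Let $Y\subset\R^n$ be $C$-analytic, $X\subset\R^m$ locally compact, $f\in\mscr{C}^0_*(X,Y)$, and $\varepsilon:X\to\R^+$ a strictly positive continuous function; the aim is to produce $g\in\mscr{C}^\infty_*(X,Y)\cap\mc{N}(f,\varepsilon)$. Since Theorem \ref{thm:main1} is not available here (we do not know whether $Y$ is weakly $\mscr{C}^r$ triangulable for $r\geq 2$), the plan rests on three ingredients: first, a geometric embedding that places $Y$ inside a better-behaved coherent $C$-analytic over-set; second, a \emph{weak $\mscr{C}^\infty$ retraction} $\rho$ from a neighborhood of $Y$ onto $Y$; and third, a \emph{shrink-widen} scheme that converts an ordinary Whitney approximation of $f$ in the ambient space into a proper smooth map with image in $Y$.

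I would begin by invoking the construction flagged in the keywords, \emph{immersion of $C$-analytic sets as singular sets of coherent $C$-analytic sets}, to realize $Y$ inside a coherent $C$-analytic set $Z\subset\R^N$ for some $N\geq n$. The value of coherence is that $Z$ admits a paracompact complex-analytic neighborhood in $\C^N$ and thus supports a real-analytic Whitney stratification $\{Y_\alpha\}_\alpha$ of $Y$ by $\mscr{C}^\omega$ submanifolds of $\R^N$, equipped with compatible tubular data $(T_\alpha,\pi_\alpha,\nu_\alpha)$ of Mather--Thom type (the $\pi_\alpha$ are real-analytic retractions $T_\alpha\to Y_\alpha$ that commute with one another inside the respective tube radii). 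Gluing these retractions through a stratified smooth partition of unity yields a continuous retraction $\rho:U\to Y$ on an open neighborhood $U$ of $Y$ in~$\R^N$ whose restriction to the preimage of each stratum is of class $\mscr{C}^\infty$; this is the weak $\mscr{C}^\infty$ retraction.

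Next I would apply Whitney's approximation theorem to $f:X\to Y\subset\R^N$ to produce a $\mscr{C}^\infty$ map $F_0:X\to\R^N$ landing in $U$ and as close to $f$ as required. Proceeding by induction on the strata in order of increasing dimension, I deform $F_0$ through a sequence of smooth maps $F_0,F_1,\dots,F_k:X\to U$ by the following operation at stage $j$: on a shrunken open set where $F_{j-1}$ enters deep into the tube around the current stratum $Y_\alpha$, replace $F_{j-1}$ by $\pi_\alpha\circ F_{j-1}$, so that the corrected map lands on $Y_\alpha$; outside a slightly widened open set keep $F_{j-1}$ unchanged; and on the annular region between the two interpolate inside $T_\alpha$ by means of a $\mscr{C}^\infty$ bump. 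The Mather--Thom compatibility of the tubular data ensures that successive corrections are consistent across adjacent strata, so that $F_k$ takes values in $Y$ and is globally $\mscr{C}^\infty$. Setting $g:=F_k$ produces a smooth map in $\mc{N}(f,\varepsilon)$ provided the initial Whitney approximation was fine enough and the tubes employed at each stage were chosen sufficiently thin.

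Properness is enforced by a standard exhaustion bookkeeping: pick compact exhaustions $\{K_p\}$ of $X$ and $\{L_p\}$ of $Y$ with $f^{-1}(L_p)\subset\Int(K_{p+1})$, and require $\varepsilon$ to be smaller than $\dist(f(K_{p+1}\setminus\Int(K_p)),Y\setminus L_{p+2})$ on the corresponding annulus; then $g^{-1}(L_p)\subset K_{p+1}$ for every $p$, so $g$ is proper. I expect the real obstacle to lie in the first step, namely the construction of the weak $\mscr{C}^\infty$ retraction $\rho$. A general $C$-analytic set $Y$ need not itself be coherent, and its local tubular data need not patch along its singular locus; the detour through the coherent over-set $Z$, together with real-analytic Mather--Thom stratification, is exactly what delivers the required patching. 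Once $\rho$ is in hand, the shrink-widen induction is a technical but essentially standard partition-of-unity argument, and properness is a bookkeeping matter.
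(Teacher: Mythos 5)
Your high-level outline (singular-set immersion into a coherent over-set $Z$, a weak $\mscr{C}^\infty$ retraction, push a Whitney approximation onto $Y$, bookkeeping for properness) matches the paper's strategy in broad strokes, and you correctly anticipate that the retraction is the delicate point. But there are two genuine gaps that would make the argument as written fail.

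First, the map $\rho$ you build is not a weak $\mscr{C}^\infty$ retraction in the sense used here. Gluing the Mather--Thom projections $\pi_\alpha$ by a stratified partition of unity only yields, as you yourself concede, a continuous map that is smooth stratum-by-stratum; it is not $\mscr{C}^\infty$ as a map between ambient manifolds. The whole point of the notion is to have a genuine $\mscr{C}^\infty$ map defined on an open neighborhood, so that post-composition with a Whitney approximation is automatically $\mscr{C}^\infty$. The paper obtains this by first applying Hironaka resolution to replace $Z$ by a manifold $Z'$ in which the preimage $Y'$ of $\Sing(Z)$ is an analytic \emph{normal-crossings} divisor. For normal crossings one can build honest $\mscr{C}^\infty$ weak retractions $\rho:W\to Y'$ (Proposition \ref{wr}), essentially by composing, one hypersurface at a time, tubular collapses that are rendered smooth by a cutoff vanishing flat at the center. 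Nothing of the kind is available on an arbitrary Whitney-stratified $Y$, and the difficulty is precisely where you expected trouble; resolution of singularities, not Mather--Thom gluing, is the mechanism that solves it.

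Second, you underuse Lemma \ref{y}. The point of the embedding $Y\hookrightarrow\Sing(Z)$ with $Z=\{f(x)^2+y_1^2-y_2^3=0\}$ is not merely that $Z$ is coherent; it is that the projection $\pi:\R^{n+2}\to\R^{n+1}$ restricts to a homeomorphism $Z\cong\R^{n+1}$ carrying $\Sing(Z)=Y\times\{(0,0)\}$ onto $Y\times\{0\}$. This is what allows the approximation to get off the ground: one replaces $(f,0):X\to\R^{n+1}$ by $(f,\xi)$ with $\xi>0$ small, so the image avoids $\{y_1=0\}$ and hence lifts through $\psi^{-1}$ and through $\phi^{-1}$ into the \emph{smooth} manifold $Z'\setminus Y'$. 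Only there does one apply Whitney approximation, after which the weak retraction $\rho$ pushes the result back onto $Y'$ and then down to $Y$. Your induction over strata, in which $F_{j-1}$ is replaced by $\pi_\alpha\circ F_{j-1}$ on a shrunken set and interpolated in the annulus ``inside $T_\alpha$'', does not produce a map with image in $Y$: the interpolated values lie in the tube, not in $Y$, and even at the end $F_k$ will generally fail to be $\mscr{C}^\infty$ across stratum boundaries for the same reason $\rho$ fails to be smooth. Finally, note that properness is used in the paper in a specific way (to construct a controlling function $\delta$ on $\R^{n+1}$ with $\delta\circ(f,0)\le\veps/6$ and to guarantee that $(\psi\circ\phi)(C')$ is closed); it is more than standard exhaustion bookkeeping.
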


%%%

\subsection{Sharpness}

The results presented above provide families of triangulable sets $Y\subset\R^n$ for which $\mscr{C}^r(X,Y)$ is dense in $\mscr{C}^0(X,Y)$ where $r\in\N^*\cup\{\infty\}$ and $X\subset\R^m$ is an arbitrary locally compact set. If $s>0$ is any positive integer such that $s<r$, one may ask whether $\mscr{C}^r(X,Y)$ is also dense in $\mscr{C}^s(X,Y)$ at least in the case $X$ is a $\mscr{C}^s$ submanifold of $\R^m$, where $\mscr{C}^s(X,Y)$ is endowed with the relative topology induced by the strong (Whitney) $\mscr{C}^s$ topology of $\mscr{C}^s(X,\R^n)$ via the natural inclusion $\mscr{C}^s(X,Y)\hookrightarrow\mscr{C}^s(X,\R^n)$.

The following example points out that there is no hope to obtain general statements if $s>0$.

\begin{example}\label{c1top}
Let $X:=\{(x,y)\in\R^2:\ x^2+y^2=1\}$ be the standard circle, let $s\in\N^*$ and let $Y:=\{(x,y,z)\in X\times\R:\ z^3-y^{3s+1}=0\}$. Note that $Y$ is not a $\mscr{C}^{s+1}$ submanifold of $\R^3$.

Consider the $\mscr{C}^s$ map $f:X\to Y$, $(x,y)\mapsto(x,y,y^{s+1/3})$. Such a map cannot be $\mscr{C}^1$ approximated (and hence $\mscr{C}^s$ approximated) by maps in $\mscr{C}^{s+1}(X,Y)$. Suppose on the contrary that there exists a $\mscr{C}^{s+1}$ map $g:=(g_1,g_2,g_3):X\to Y$ arbitrarily close to $f$ in the strong $\mscr{C}^1$ topology. Thus, $g_*:=(g_1,g_2):X\to X$ is arbitrarily $\mscr{C}^1$ close to the identity map on $X$, hence $g_*$ is a $\mscr{C}^{s+1}$ diffeomorphism by the inverse function theorem. As $g_3=g_2^{s+1/3}$, it follows that $(g_3\circ g_*^{-1})(x,y)=y^{s+1/3}$ is a $\mscr{C}^{s+1}$ function on $X$, which is a contradiction. Indeed, such a function is not of class $\mscr{C}^{s+1}$ locally at $(\pm1,0)$. This proves that $\mscr{C}^{s+1}(X,Y)$ is not dense in $\mscr{C}^s(X,Y)$. $\sqbullet$
\end{example}

If $s=0$, it is also sharp our choice $r\in\N^*\cup\{\infty\}$, that is, we cannot choose in general $r=\omega$.

\begin{example}\label{sameexample}
Let $Y:=\{xy=0\}\subset\R^2$ and let $f:\R\to Y$ be the continuous map defined by $f(t):=(0,t)$ if $t<0$ and $f(t):=(t,0)$ if $t\geq 0$. Then $f$ cannot be approximated by real analytic maps $g=(g_-,g_+):\R\to Y$. Otherwise, $g_\pm$ would be a real analytic function on $\R$ vanishing identically locally at $\pm\infty$ and nowhere zero locally at $\mp\infty$, which is impossible by the principle of analytic continuation. The reader may compare this `negative' example with the `positive' approximation theorem \cite[Thm.1.7]{bfr}, which is a key result for the proof of the main theorem of \cite{fe}. $\sqbullet$
\end{example}

Similar examples appeared in our manuscript \cite{fg}.

%%%

\subsection{An unexpected by-product} \label{by}

The techniques involved in the proof of Theorem~\ref{thm:main1} reveal another approximation property of each locally compact polyhedron $P$ that has interest by its own. We fix a convention: \em The set $\mscr{C}^0(P,\R^+)$ of strictly positive continuous functions on $P$ is endowed with the partial ordering $\succcurlyeq$ defined by $\veps\succcurlyeq\delta$ if $\veps(w)\leq\delta(w)$ for each $w\in P$\em. Note that $\mscr{C}^0(P,\R^+)$ is a directed set with such an ordering.

\begin{cor} \label{cor:KP}
Let $K$ be a locally finite simplicial complex of $\R^p$ and let $P:=|K|\subset\R^p$ be its underlying locally compact polyhedron. Then there exists a net $\{\iota_\veps\}_{\veps\in\mscr{C}^0(P,\R^+)}$ in $\mscr{C}^\infty(P,P)$ that depends only on $K$, converges in the Moore-Smith sense to the identity map on $P$ in $\mscr{C}^0(P,P)$ and satisfies the following universal property:
\begin{itemize}
\item[$(\ast)$] Let $r\in\N^*\cup\{\infty\}$, let $Y\subset\R^n$ be any weakly $\mscr{C}^r$ triangulable set and let $f\in\mscr{C}^0(P,Y)$ be such that $f|_{\sigma}\in\mscr{C}^r(\sigma,Y)$ for each $\sigma\in K$. Then the net $\{f\circ\iota_\veps\}_{\veps\in\mscr{C}^0(P,\R^+)}$ converges in the Moore-Smith sense to $f$ in $\mscr{C}^0(P,Y)$ and each composition $f\circ\iota_\veps$ belongs to $\mscr{C}^r(P,Y)$.
\end{itemize}
\end{cor}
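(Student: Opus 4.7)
The plan is to extract from the proof of Theorem~\ref{thm:main1} a smoothing net $\{\iota_\veps\}$ intrinsic to $K$, depending neither on the target $Y$ nor on the map $f$ to be approximated. The $\iota_\veps$ will be ``shrink--widen'' self-maps of $P$ that are globally $\mscr{C}^\infty$ when viewed as maps of an open neighborhood of $P$ in $\R^p$, $\veps$-close to the identity, and equipped with a crucial localization property confining the image of any small neighborhood of a simplex to the simplex itself.

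First I would build $\iota_\veps$. Proceeding by induction on the dimension of the simplices of $K$, for each $\sigma\in K$ I choose a compact ``shrunk core'' $\sigma^*_\veps\subset\sigma^0$ and an open ``widening'' $W^\veps_\sigma\subset P$ of $\sigma^0$, with both scales controlled by $\veps$ and arranged so that the widenings of lower-dimensional simplices strictly dominate those of higher-dimensional ones on common boundaries. Combining a smooth partition of unity subordinate to $\{W^\veps_\sigma\}$ with smooth affine retractions of the form $w\mapsto(1-t(w))w+t(w)\,b_\sigma$ toward the barycenters $b_\sigma$, one assembles a map $\iota_\veps$ satisfying (a)~$\iota_\veps$ extends to a $\mscr{C}^\infty$ map on an open neighborhood of $P$ in $\R^p$, (b)~$\iota_\veps(P)\subset P$, (c)~$\|\iota_\veps(w)-w\|_p\leq\veps(w)$ for all $w\in P$, and (d)~for each $\sigma\in K$ there exists an open neighborhood $U_\sigma\subset\R^p$ of $\sigma^0$ such that $\iota_\veps(U_\sigma\cap P)\subset\sigma$.

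With $\iota_\veps$ in hand, property $(\ast)$ follows cleanly. Given $f\in\mscr{C}^0(P,Y)$ with $f|_\sigma\in\mscr{C}^r(\sigma,Y)$ for every $\sigma$, each $f|_\sigma$ extends to a $\mscr{C}^r$ map $F_\sigma:\Omega_\sigma\to\R^n$ on an open neighborhood $\Omega_\sigma\subset\R^p$ of $\sigma$. Shrinking the $U_\sigma$ of (d) so that $\iota_\veps(U_\sigma)\subset\Omega_\sigma$, each composition $F_\sigma\circ\iota_\veps$ is $\mscr{C}^r$ on $U_\sigma$ and equals $f\circ\iota_\veps$ on $U_\sigma\cap P$ (because $\iota_\veps(U_\sigma\cap P)\subset\sigma$ and $F_\sigma|_\sigma=f|_\sigma$). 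A smooth partition of unity subordinate to $\{U_\sigma\}$ glues these into a global $\mscr{C}^r$ extension of $f\circ\iota_\veps$ on an open neighborhood of $P$, yielding $f\circ\iota_\veps\in\mscr{C}^r(P,Y)$; and (c) combined with the uniform continuity of $f$ on compact sets delivers the Moore--Smith convergence of $\{f\circ\iota_\veps\}$ to $f$ in $\mscr{C}^0(P,Y)$.

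The main obstacle lies in the simultaneous enforcement of (a)--(d). Each individually is easy, but they are in tension: piecewise-affine barycentric retractions are not smooth across faces, a naive smooth partition of unity destroys both (b) and (d), and smoothed projection toward a single simplex typically leaves $P$. The ``shrink--widen'' bookkeeping---making the widenings of the faces of $\sigma$ strictly larger than the widening of $\sigma$ itself on the common overlap---is precisely what confines all partition-of-unity transitions to the interior of a single simplex, thereby preserving (b) and (d) while permitting (a) and (c). Carrying out this induction cleanly over the skeleton of $K$ is the technical heart of the argument; once it is in place, the universality in $(Y,f,r)$ is an immediate consequence of (d).
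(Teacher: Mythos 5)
Your plan is in essence the paper's: build a smoothing net $\{\iota_\veps\}$ intrinsic to $K$ via the shrink--widen mechanism, prove it is $\mscr{C}^\infty$, $\veps$-close to $\id_P$, and satisfies the confinement property that makes $(\ast)$ routine. The paper in fact obtains $\iota_\veps$ in one stroke: Lemma~\ref{lem:approximation'} applied with $X:=P$, $\Phi:=\id_P$, $L:=K$, $g:=\id_P$ and $\delta:=\veps$ immediately produces $\iota_\veps\in\mscr{C}^\infty(P,P)$ together with open neighborhoods $W_\sigma\subset P$ of each $\sigma$ with $\iota_\veps(W_\sigma)\subset\sigma$ and $\|\iota_\veps(x)-x\|_p<\veps(x)$, and the deduction of $(\ast)$ from these two properties is exactly the gluing argument you outline. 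Your verification of $(\ast)$ from properties (a)--(d) is correct and even slightly more explicit about the partition-of-unity gluing than the paper's terse remark.

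Where your sketch is not quite right is the retraction ingredient. The map $w\mapsto(1-t(w))w+t(w)\,b_\sigma$ retains a nonzero component of $w$ itself, so if $w$ lies in a widened neighborhood of $\sigma^0$ inside $P$ but $w\notin\sigma$ (a generic situation in your own set-up), the image need not land in $\sigma$, and property (d) --- which is the linchpin of the whole argument --- would fail. What Lemma~\ref{lem:approximation'} actually uses, via Lemma~\ref{lem:covering}, is a $\mscr{C}^\infty$ tubular-neighborhood retraction $r_{\sfs^0}\colon U_{\sfs^0}\to V_{\sfs^0}\subset\sfs^0$ (for $P\subset\R^p$ one can take the orthogonal projection onto the affine hull of $\sigma$, restricted to a widened tube around a shrunk open simplex). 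Because each $r_{\sfs^0}(x)$ genuinely lies in $\sfs^0$, and because the shrink--widen bookkeeping guarantees that all $\sfs^0$ with $\theta_{\sfs^0}(x)\neq 0$ are faces of a single simplex $\sft\ni x$, the convex combination $\iota_\veps(x)=\sum_{\sfs^0}\theta_{\sfs^0}(x)\,r_{\sfs^0}(x)$ lands in $\sft$, which is what delivers both $\iota_\veps(P)\subset P$ and the confinement $\iota_\veps(W_\sigma)\subset\sigma$. So the ``project, then shrink'' retraction, not the barycentric homothety, is the piece you need; once you replace it, your construction coincides with the paper's.
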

\begin{remark}\label{rem:KP}
If in the preceding statement $K$ is a finite simplicial complex, the net $\{\iota_\veps\}_{\veps\in\mscr{C}^0(P,\R^+)}$ can be replaced by a sequence $\{\iota_k\}_{k\in\N}$ in $\mscr{C}^\infty(P,P)$ with the same universal property. $\sqbullet$
\end{remark}

%%%

\subsection{Smooth relative approximations}\label{ra}
A natural question that arises when dealing with approximation problems concerns the existence of relative versions.

{\it Fix $r\in\N^*\cup\{\infty\}$.}

Let $X'\subset X\subset\R^m$ be non-empty sets such that $X$ is locally compact and $X'$ is closed in $X$, let $Y\subset\R^n$ be a set and let $f:X\to Y$ be a continuous map whose restriction $f|_{X'}:X'\to Y$ to $X'$ is a $\mscr{C}^r$ map. \em Are there $\mscr{C}^r$ maps $g:X\to Y$ that approximate $f$ and satisfy $g|_{X'}=f|_{X'}$?\em

As it is well-known \cite[p.42, Ex.(a)]{mu4}: \em If $Y$ admits a system of $\mscr{C}^r$ tubular neighborhoods in $\R^n$ \em (that is, $Y$ is a boundaryless $\mscr{C}^r$ submanifold of $\R^n$)\em, then relative approximation is always possible\em. We sketch here a proof of the previous fact to stress once more the crucial role played by $\mscr{C}^r$ retractions onto $Y$.
\begin{proof}[Sketch of proof]
Let $f\in\mscr{C}^0(X,Y)$ be such that $f|_{X'}$ is a $\mscr{C}^r$ map. Let $(V,\rho)$ be a $\mscr{C}^r$ tubular neighborhood of $Y$ in $\R^n$, where $\rho:V\to Y$ is a $\mscr{C}^r$ retraction. By Whitney's approximation theorem there exists $g_0\in\mscr{C}^r(X,Y)$ close to $f$ in the strong $\mscr{C}^0$ topology. Let $f_1:X\to\R^n$ be a $\mscr{C}^r$ extension of $f|_{X'}$ to a small enough open neighborhood $U\subset X$ of $X'$. Let $\{\theta,1-\theta\}$ be a $\mscr{C}^r$ partition of unity associated to $\{U,X\setminus X'\}$. Then $g_1:=\theta f_1+(1-\theta)g_0:X\to\R^n$ is close to $f$ in the strong $\mscr{C}^0$ topology and $g_1|_{X'}=f|_{X'}$. We may assume in addition $g_1(X)\subset V$ and $g:=\rho\circ g_1$ is close to $\rho\circ f=f$ in the strong $\mscr{C}^0$ topology (see Lemma \ref{lem:approx} below). Observe that $g|_{X'}=g_1|_{X'}=f|_{X'}$, as required. 
\end{proof}

For more general spaces $Y$, which do not have $\mscr{C}^r$ retractions of open neighborhoods of $Y$ in $\R^n$ onto $Y$, the situation is more restrictive with both $X'$ and the restriction $f|_{X'}$. Let us see some enlightening counterexamples concerning $\mscr{C}^r$ manifolds $Y$ with non-empty boundary.

\begin{examples}\label{surp}
Let $Y$ be a $\mscr{C}^r$ submanifold of $\R^n$ with non-empty boundary. Let $D(Y)$ be the double of $Y$. Denote $T_yY$ the tangent space of $Y$ at the point $y\in Y$. Let $h:Y\to\R$ be a $\mscr{C}^r$ equation of the boundary $\partial Y$ of $Y$ such that $\{h>0\}$ equals the interior $Y\setminus\partial Y$ of $Y$ and $d_yh:T_yY\to\R$ is surjective for each $y\in\partial Y$. A well-know way to endow $D(Y)$ with a $\mscr{C}^r$ structure is to identify it with the boundaryless $\mscr{C}^r$ manifold $M:=\{(x,t)\in Y\times\R:\ t^2=h(x)\}$. We also identify $Y$ with $M\cap\{t\geq0\}$. Denote $\pi:\R^n\times\R\to\R$ the projection $(x,t)\mapsto t$, which satisfies $\pi(Y)\subset[0,+\infty)$ and $\pi(\partial Y)=\{0\}$.

(i) Let $X'\subset X\subset\R^m$ be non-empty sets and let $f:X\to Y$ be a continuous map such that the restriction $f|_{X'}:X'\to Y$ is a $\mscr{C}^r$ map. Assume that there exists a continuous path $\beta:[-1,1]\to X$ such that $\beta([0,1])\subset X'$, the restriction $\beta|_{[0,1]}:[0,1]\to X$ is a $\mscr{C}^r$ map, $f(\beta(0))\in\partial Y$ and the derivative $(\pi\circ f\circ\beta)'(0)$ of $\pi\circ f\circ\beta$ at $0$ is strictly positive. Recall that $\pi(Y)\subset[0,+\infty)$ and $\pi(\partial Y)=\{0\}$. Consider the continuous map 
\[
f^*:[-1,1]\to[0,+\infty),\ t\mapsto\pi(f(\beta(t))). 
\]
Note that $f^*|_{[0,1]}$ is a $\mscr{C}^r$ map, $f^*(0)=0$ and $(f^*)'(0)>0$. It follows immediately that there exists no $\mscr{C}^r$ map $g:X\to Y$ such that $g|_{X'}=f|_{X'}$ (or better $g|_{\beta([0,1])}=f|_{\beta([0,1])}$). Otherwise, the $\mscr{C}^r$ map $g^*:[-1,1]\to[0,+\infty)$, $t\mapsto\pi(g(\beta(t)))$ would coincide with $f^*$ on $[0,1]$, so $(g^*)'(0)=(f^*)'(0)>0$, so $g^*$ would be strictly increasing locally at $t=0$ in $[-1,1]$. This is impossible because $g^*(0)=f^*(0)=0$ and $g^*\geq0$ on the whole interval $[-1,1]$. Consequently, there exists no $\mscr{C}^r$ extension of $f^*|_{[0,1]}$ to $[-1,1]$ whose image is contained in $[0,+\infty)$ and there exists no $\mscr{C}^r$ extension from $X$ to $Y$ of $f|_{X'}$. In particular, there exists no $\mscr{C}^r$ approximation $g$ of $f$ such that $g|_{X'}=f|_{X'}$.

$(\mr{i}')$ An easy example in which the obstruction described in (i) appears is the following: $X:=[-1,1]$, $X':=[0,1]$, $Y:=\{(x,y)\in\R^2:\ y\geq0\}$ and $f:X\to Y$ is given by $f(t):=(t,0)$ if $t\in[-1,0)$, $f(t):=(0,t)$ if $t\in X'=[0,1]$ and $\beta:[-1,1]\to X$ is the identity map.

(ii) We keep the notations fixed above concerning the $\mscr{C}^r$ submanifold $Y$ of $\R^n$ with non-empty boundary. Suppose that $Y$ has dimension $d$. Consider the continuous map
\[
f_1:D(Y)\to Y,\ (x,t)\mapsto(x,|t|).
\]
We claim: \em $f_1|_Y=\id_Y$ (which is $\mscr{C}^r$), but $f_1|_Y$ admits no $\mscr{C}^r$ extension to $D(Y)$ whose image is contained in $Y$\em. This implies that \em there exists no $\mscr{C}^r$ approximation $g$ of $f_1$ such that $g|_Y=f_1|_Y$\em.

Pick a point $y_0\in\partial Y$. Let $\epsilon>0$, let $B_n(0,\epsilon^2)$ be the open ball of $\R^n$ centered at $0$ with radius $\epsilon^2$ and let $U\subset\R^n$ be an open neighborhood of $y_0$ endowed with a $\mscr{C}^r$ diffeomorphism $u:=(u_1,\ldots,u_n):U\to B_n(0,\epsilon^2)$ such that $u(y_0)=0$, $u(U\cap Y)=B_n(0,\epsilon^2)\cap\{x_1\geq0,x_{d+1}=0,\ldots,x_n=0\}$ and $u_1(x)=h(x)$ for each $x\in U\cap Y$. Consider the $\mscr{C}^r$ diffeomorphism
\[
u^*:U^*:=U\times\R\to B_n(0,\epsilon^2)\times\R,\ (x,t)\mapsto(u(x),t)
\]
such that $u^*(y_0,0)=(0,0)$ and 
\[
u^*(D(Y)\cap U^*)=(B_n(0,\epsilon^2)\times\R)\cap\{x_1=t^2,x_{d+1}=0,\ldots,x_n=0\}=:M^*.
\] 
Consider the path $\alpha:(-\epsilon,\epsilon)\to M^*$, $t\mapsto(t^2,0,\ldots,0,t)$ and define the function
\[
f_1^*:(-\epsilon,\epsilon)\to\R,\ t\mapsto\pi(f_1((u^*)^{-1}(\alpha(t))))=|t|.
\] 
Observe that $f_1^*|_{[0,\epsilon)}:[0,\epsilon)\to\R$, $t\mapsto t$ is a $\mscr{C}^r$ function, but there exists no extension of $f_1^*|_{[0,\epsilon)}$ to $(-\epsilon,\epsilon)$ whose image is contained in $\pi(Y)\subset[0,+\infty)$. In particular, there exists no $\mscr{C}^r$ extension of $f_1|_Y$ to $D(Y)$ whose image is contained in $Y$, as required. $\sqbullet$
\end{examples}

In order to avoid the obstruction described in the preceding examples, in \cite[Ch.III., Thm.6.1]{orr} (see Theorem \ref{thm:ORR} above) the authors slightly modify classical relative approximation statement and ask (as a stronger hypothesis) that the restriction of the continuous function $f$ to a small enough open neighborhood $W\subset X$ of $X'$ is a $\mscr{C}^r$ map. Then they take advantage of the existence of $\mscr{C}^r$ tubular neighborhoods of $Y\setminus\partial Y$ in $\R^n$ and of $\mscr{C}^r$ collars of $\partial Y$ in $Y$. In this way, they obtain a $\mscr{C}^r$ approximation $g$ of $f$ which coincides with $f$ on a neighborhood $W'\subset W$ of $X'$. Since $g=f$ on a neighborhood $W'\subset W$ of $X'$, it seems not possible to integrate the constructions in \cite[Ch.III., Thm.6.1]{orr} with our approximation method to achieve more general situations, unless $f$ is constant on $W$ (see Remark \ref{rem:constant1} below). 

Suppose next $X'$ is a discrete and closed subset of $X$. Our next result states that $\mscr{C}^r$ approxi\-mation of continuous maps $f:X\to Y$ relative to $X'$ is possible.

\begin{thm}\label{thm:relative-discrete}
Let $X\subset\R^m$ be a locally compact set, let $X'$ be a discrete and closed subset of $X$, let $Y\subset\R^n$ be a weakly $\mscr{C}^r$ triangulable set and let $f:X\to Y$ be a continuous map. If $f(X')$ is discrete and closed in $Y$, then there exists $g\in\mscr{C}^r(X,Y)$ arbitrarily close to $f$ in the strong $\mscr{C}^0$ topology such that $g|_{X'}=f|_{X'}$.
\end{thm}

The latter result implies straightforwardly some properties of weakly $\mscr{C}^r$ triangulable sets $Y$ concerning connectedness and homotopy. Fix $y_0\in Y$ and denote $\pi_p(Y,y_0)$ the $p^{\mr{th}}$ homotopy group of the pointed space $(Y,y_0)$ for each $p\in\N^*$. We understand the elements of $\pi_p(Y,y_0)$ as the homotopy classes of continuous maps $(\sph^p,N)\to(Y,y_0)$, where $\sph^p$ is the standard $p$-sphere and $N$ is its north pole. The path-connected components of $Y$ coincide with its $\mscr{C}^r$ path-connected components and each element of $\pi_p(Y,y_0)$ has a representative of class $\mscr{C}^r$.

\begin{cor}\label{cor:topol}
Let $Y\subset\R^n$ be a weakly $\mscr{C}^r$ triangulable set. We have:
\begin{itemize}
\item[(i)] Each continuous path $\gamma:[0,1]\to Y$ can be approximated in the strong $\mscr{C}^0$ topology by $\mscr{C}^r$ paths $\alpha:[0,1]\to Y$ such that $\alpha(0)=\gamma(0)$ and $\alpha(1)=\gamma(1)$.
\item[(ii)] Every element of $\pi_p(Y,y_0)$ can be represented by a $\mscr{C}^r$ map.
\end{itemize}
\end{cor}

The preceding corollary holds for $r=\infty$ if $Y$ is a locally compact polyhedron and for $r=1$ if $Y$ is a locally compact locally definable set in an arbitrary o-minimal structure.

Theorem \ref{thm:relative-discrete} can still be extended: the crucial property to have approximations relative to $X'$ is that $f(X')$ has no accumulation points in $Y$. Before entering into details, we introduce the following definition.

\begin{define}\label{def:w*-C^r-triangulation-p}
Let $X'\subset X\subset\R^m$ be sets such that ($X$ is non-empty and) $X'$ is closed in $X$. The pair $(X,X')$ is \emph{weakly$^*$ $\mscr{C}^r$ triangulable} if there exist a locally finite simplicial complex $K$ of some $\R^p$, a subcomplex $K'$ of $K$ and a homeomorphism $\Phi:|K|\to X$ such that $\Phi(|K'|)=X'$ and the restriction $\Phi|_{\sigma^0}:\sigma^0\to X$ is a $\mscr{C}^r$ map for each open simplex $\sigma^0$ of $K$. We say that $X$ is \emph{weakly$^*$ $\mscr{C}^r$ triangulable} if so is the pair $(X,\emptyset)$.
\end{define}

Let $X'\subset X\subset\R^m$ be such that $X$ is locally compact and $X'$ is closed in $X$. The pair $(X,X')$ is a \em subanalytic pair \em if both $X$ and $X'$ are subanalytic subsets of $\R^m$. Analogously, if ${\mathfrak S}$ is an o-minimal structure (on the field $\R$), the pair $(X,X')$ is a \em locally definable pair of ${\mathfrak S}$ \em if both $X$ and $X'$ are locally definable sets of ${\mathfrak S}$. By Hironaka-Hardt's triangulation theorem a subanalytic pair is a weakly$^*$ $\mscr{C}^\infty$ triangulable pair, whereas by \cite[Ch.II.Thm.II']{sh1} a locally definable pair is a weakly$^*$ $\mscr{C}^\infty$ triangulable pair.

The announced extension of Theorem \ref{thm:relative-discrete} is the following.

\begin{thm} \label{thm:relative-locally-constant}
Let $(X,X')$ be a weakly$^*$ $\mscr{C}^r$ triangulable pair, let $Y\subset\R^n$ be a weakly $\mscr{C}^r$ triangulable set and let $f:X\to Y$ be a continuous map such that $f(X')$ is discrete and closed in $Y$. Then, there exists $g\in\mscr{C}^r(X,Y)$ arbitrarily close to $f$ in the strong $\mscr{C}^0$ topology such that $g|_{X'}=f|_{X'}$.
\end{thm}

As an immediate consequence of the preceding result, one deduces the following:

\begin{cor}
Let $(X,X')$ be either a subanalytic pair or a locally definable pair of an arbitrary o-minimal structure $\mathfrak{S}$, let $Y\subset\R^n$ be a set and let $f:X\to Y$ be a continuous map such that $f(X')$ is discrete and closed in $Y$. We have:
\begin{itemize}
 \item[(i)] If $(X,X')$ is a subanalytic pair and $Y$ is a locally compact polyhedron, there exists $g\in\mscr{C}^\infty(X,Y)$ arbitrarily close to $f$ in the strong $\mscr{C}^0$ topology such that $g|_{X'}=f|_{X'}$.
 \item[(ii)] If $(X,X')$ is a locally definable pair of $\mathfrak{S}$ and $Y$ is a locally compact locally definable set of $\mathfrak{S}$, there exists $g\in\mscr{C}^1(X,Y)$ arbitrarily close to $f$ in the strong $\mscr{C}^0$ topology such that $g|_{X'}=f|_{X'}$.
\end{itemize} 
\end{cor}

\subsection{Structure of the article}
In Section \ref{s2} we collect a couple of preliminary results concerning spaces of continuous maps. In the first part of Section \ref{s3} we present our variant of the general approximation theorem and our `shrink-widen' covering and approximation technique. In the second part we combine these results with the ones in Section \ref{s2} to prove Theorem \ref{thm:main1}. We provide also the proof of Corollary~\ref{cor:KP} and the relative approximation Theorem \ref{thm:relative-locally-constant} (from which follow readily the other results in \S\ref{ra}). Section \ref{s4} is devoted to prove Theorem \ref{thm:main2}, which involves the proof of the existence of \em $\mscr{C}^r$ weak retractions \em and the immersion of $C$-analytic sets as singular sets of coherent $C$-analytic sets homeomorphic to Euclidean spaces. Weaker and purely semialgebraic versions of some results presented in this article appeared in our preceding manuscript \cite{fg}.

\subsection*{Acknowledgements.}
The authors are indebted with the anonymous referee for very valuable suggestions to improve the presentation of this article.

%%%%%%%

\section{Preliminaries on spaces of continuous maps}\label{s2}

In this short section we collect a couple of results useful for the sequel. First we fix two notations we will use freely throughout the manuscript. Let $S,T\subset\R^q$ be such that $S\subset T$. Denote respectively $\cl_T(S)$ and $\Int_T(S)$ the closure of $S$ in $T$ and the interior of $S$ in $T$. The following result is well-known and its proof follows straightforwardly from \cite[\S2.5. Ex.10, pp. 64-65]{hir3} using standard arguments.

\begin{lem}\label{lem:approx}
Let $X\subset\R^m$, $X'\subset\R^{m'}$, $Y\subset\R^n$ and $Y'\subset\R^{n'}$ be locally compact sets, let $f:Y\to Y'$ be an arbitrary continuous map and let $g:X\to X'$ be a proper continuous map. Then the maps
\[
f_*:\mscr{C}^0(X,Y)\to\mscr{C}^0(X,Y'),\ h\mapsto f\circ h
\quad\text{and}\quad
g^*:\mscr{C}^0(X',Y)\to\mscr{C}^0(X,Y),\ h\mapsto h\circ g
\]
are continuous.
\end{lem}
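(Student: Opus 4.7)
The plan is to prove continuity of $f_*$ and $g^*$ separately. In each case I fix a point $h$ in the source of the map and a basic strong $\mscr{C}^0$ neighborhood $\mc{N}(\cdot,\veps)$ of its image, and produce a basic neighborhood $\mc{N}(h,\delta)$ mapped inside $\mc{N}(\cdot,\veps)$. Since $X$ and $X'$ are locally compact subsets of Euclidean space, they are metrizable and paracompact, so I may always choose a locally finite open cover by relatively compact sets together with a subordinate partition of unity. The common strategy is to build $\delta$ as a partition-of-unity average of local uniform moduli, which automatically produces a strictly positive continuous function whose value at any point is bounded above by the maximum of the relevant local moduli.

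For $f_*$, fix $h\in\mscr{C}^0(X,Y)$ and $\veps\in\mscr{C}^0(X,\R^+)$, and take $\{U_i\}_{i\in I}$ locally finite in $X$ with each $\cl_X(U_i)$ compact, together with a subordinate partition of unity $\{\phi_i\}_{i\in I}$. For each $i$, the set $K_i:=h(\cl_X(U_i))$ is compact in $Y$ and $\eta_i:=\tfrac12\min_{\cl_X(U_i)}\veps>0$; a Lebesgue-number argument applied to the cover of $K_i$ produced by the continuity of $f$ yields $\mu_i>0$ such that $y_0\in K_i$ and $y\in Y$ with $\|y-y_0\|_n<\mu_i$ force $\|f(y)-f(y_0)\|_{n'}<\eta_i$. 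Setting $\delta(x):=\sum_i\phi_i(x)\mu_i$, for every $x\in X$ the sum is finite and convex with $\sum_i\phi_i(x)=1$, hence $\delta(x)\le\mu_{i^*}$ for some index $i^*$ with $\phi_{i^*}(x)>0$, in particular $x\in U_{i^*}$ and $h(x)\in K_{i^*}$. Therefore any $k\in\mc{N}(h,\delta)\cap\mscr{C}^0(X,Y)$ satisfies $\|f(k(x))-f(h(x))\|_{n'}<\eta_{i^*}\le\tfrac12\veps(x)<\veps(x)$, proving $f_*(\mc{N}(h,\delta))\subset\mc{N}(f\circ h,\veps)$.

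For $g^*$, fix $h\in\mscr{C}^0(X',Y)$ and $\veps\in\mscr{C}^0(X,\R^+)$, and take $\{V_j\}_{j\in J}$ locally finite in $X'$ with each $\cl_{X'}(V_j)$ compact, together with a subordinate partition of unity $\{\psi_j\}_{j\in J}$. Properness of $g$ makes each preimage $g^{-1}(\cl_{X'}(V_j))$ a compact subset of $X$, so the quantity $\alpha_j:=\min_{g^{-1}(\cl_{X'}(V_j))}\veps>0$ is well defined whenever the preimage is non-empty (I set $\alpha_j:=1$ otherwise). Put $\delta(x'):=\sum_j\psi_j(x')\min(\alpha_j,1)$, which is strictly positive and continuous because the partition of unity is locally finite. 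The key pointwise bound $\delta(g(x))\le\veps(x)$ holds for every $x\in X$, since each index $j$ contributing non-trivially to $\delta(g(x))$ satisfies $g(x)\in V_j$, hence $x\in g^{-1}(V_j)\subset g^{-1}(\cl_{X'}(V_j))$, so $\veps(x)\ge\alpha_j\ge\min(\alpha_j,1)$, and then $\delta(g(x))\le\veps(x)\sum_j\psi_j(g(x))=\veps(x)$. This immediately yields $g^*(\mc{N}(h,\delta))\subset\mc{N}(h\circ g,\veps)$. The only conceptual obstacle across the two arguments is gluing the uncoordinated local moduli into a single globally continuous positive function, and this is resolved by the convex-averaging property of partitions of unity; properness is used solely to ensure that the preimages $g^{-1}(\cl_{X'}(V_j))$ are compact so that $\veps$ admits a strictly positive infimum on them.
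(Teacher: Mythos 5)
Your proof is correct. The paper gives no argument for this lemma; it simply declares it well known, citing \S2.5, Ex.\ 10 of Hirsch's \emph{Differential Topology}, so there is no internal proof to compare against. Your partition-of-unity construction of the test function $\delta$ from local uniform moduli is the standard way to fill in that exercise: the convex average $\sum_i\phi_i\mu_i$ (resp.\ $\sum_j\psi_j\min(\alpha_j,1)$) is automatically continuous and strictly positive, and at each point it is bounded above by one of the active local moduli, which is exactly what lets the local estimates patch into a global one; you also correctly isolate the single role of properness of $g$, namely that the preimages $g^{-1}(\cl_{X'}(V_j))$ are compact, so that $\veps$ attains a positive minimum on each of them.
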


As a consequence, we deduce:

\begin{cor}\label{cor:closedness}
Let $X\subset\R^m$ and $Y\subset\R^n$ be locally compact sets. Then there exist closed subsets $X'$ of $\R^{m+1}$ and $Y'$ of $\R^{n+1}$ such that: 
\begin{itemize}
\item $X'$ is homeomorphic to $X$ and $Y'$ is homeomorphic to $Y$, 
\end{itemize}
and the following property holds for each $r\in\N^*\cup\{\infty\}$: 
\begin{itemize}
\item $\mscr{C}^r(X,Y)$ is dense in $\mscr{C}^0(X,Y)$ if and only if $\mscr{C}^r(X',Y')$ is dense in $\mscr{C}^0(X',Y')$.
\end{itemize}
In addition, if $Y$ is a $C$-analytic subset of some open subset $U$ of $\R^n$, then there exists also a $C$-analytic subset $Y''$ of $\R^{2n+1}$ homeomorphic to $Y$ such that $\mscr{C}^\infty_*(X,Y)$ is dense in $\mscr{C}^0_*(X,Y)$ if and only if $\mscr{C}^\infty_*(X',Y'')$ is dense in $\mscr{C}^0_*(X',Y'')$.
\end{cor}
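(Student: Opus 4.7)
The plan is to produce $X'$ (and $Y'$) by a standard ``graph-at-infinity'' trick, and then to transfer the density statement via Lemma~\ref{lem:approx}. Since $X \subset \R^m$ is locally compact, it is locally closed in $\R^m$, so $F_X := \cl_{\R^m}(X) \setminus X$ is closed in $\R^m$ and $X$ is closed in the open set $U_X := \R^m \setminus F_X$. By Whitney's theorem on zero sets of smooth functions, there is a non-negative $h_X \in \mscr{C}^\infty(\R^m, \R)$ with $\{h_X = 0\} = F_X$; set $g_X := 1/h_X$ on $U_X$ and define $\phi : U_X \to \R^{m+1}$ by $\phi(x) := (x, g_X(x))$. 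Then $\phi|_X : X \to X' := \phi(X)$ is a $\mscr{C}^\infty$ homeomorphism onto a closed subset of $\R^{m+1}$, since any limit $\phi(x_k) \to (x_0, t_0)$ forces $t_0 < \infty$, hence $x_0 \notin F_X$, hence $x_0 \in X$. The inverse $\phi|_X^{-1}$ is the restriction of the first-coordinate projection $\pi_1 : \R^{m+1} \to \R^m$, which is globally $\mscr{C}^\infty$. We build $\psi : Y \to Y' \subset \R^{n+1}$ analogously.

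The composition $\Phi : h \mapsto \psi \circ h \circ \phi^{-1}$ is a bijection $\mscr{C}^0(X, Y) \to \mscr{C}^0(X', Y')$. Because $\phi, \phi^{-1}, \psi, \psi^{-1}$ are homeomorphisms between locally compact Hausdorff spaces, they are proper, and two applications of Lemma~\ref{lem:approx} make $\Phi$ a homeomorphism for the strong Whitney $\mscr{C}^0$ topology. Since $\phi, \psi$ and their inverses have $\mscr{C}^\infty$ (affine, in the case of the inverses) extensions to ambient open sets, $\Phi$ restricts to a bijection $\mscr{C}^r(X, Y) \to \mscr{C}^r(X', Y')$ for every $r \in \N^* \cup \{\infty\}$. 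The density equivalence of the first part follows.

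For the $C$-analytic addendum we invoke Grauert's embedding theorem: the open set $U \subset \R^n$ is a real analytic manifold of dimension $n$, so there is a proper real analytic embedding $\iota : U \to \R^{2n+1}$ whose image $M := \iota(U)$ is a closed real analytic submanifold of $\R^{2n+1}$, hence a coherent $C$-analytic subset of $\R^{2n+1}$. By Cartan's theorem~B in its real analytic form for coherent sheaves on Euclidean space, the ideal of $M$ is generated by finitely many $g_1, \ldots, g_s \in \an(\R^{2n+1})$, and every analytic function on $M$ extends to an analytic function on $\R^{2n+1}$. Applied to $f_i \circ \iota^{-1} \in \an(M)$, where $Y = \{f_1 = \cdots = f_r = 0\}$ in $U$, this yields $\tilde f_i \in \an(\R^{2n+1})$. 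Setting $Y'' := \iota(Y)$ we then have
\[
Y'' = \{g_1 = \cdots = g_s = \tilde f_1 = \cdots = \tilde f_r = 0\},
\]
a $C$-analytic subset of $\R^{2n+1}$; it is closed (because $\iota$ is proper and $Y$ is closed in $U$) and homeomorphic to $Y$ via $\iota|_Y$.

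The density transfer for proper $\mscr{C}^\infty$ maps mirrors the first part. The map $h \mapsto \iota \circ h \circ \phi^{-1}$ is a bijection $\mscr{C}^0(X, Y) \to \mscr{C}^0(X', Y'')$; by Lemma~\ref{lem:approx} it is a homeomorphism for the strong Whitney $\mscr{C}^0$ topology, and it restricts to a homeomorphism on the proper subspaces since properness is preserved by composition with homeomorphisms of locally compact spaces. It also restricts to a bijection between the $\mscr{C}^\infty$ subspaces: one direction is immediate from the analyticity of $\iota$; for the converse we use a smooth tubular neighborhood retraction $\rho : W \to M$ of the closed $\mscr{C}^\infty$ submanifold $M$ in $\R^{2n+1}$ to turn $\iota^{-1}$ into a smooth map on an open neighborhood of $M$, and thus pull $\mscr{C}^\infty$ maps from $Y''$ back to $Y$. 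The main technical obstacle is the $C$-analyticity of $Y''$: it hinges on Grauert's embedding theorem in the analytic category together with Cartan's theorem~B for coherent real analytic sheaves, facts substantially deeper than the elementary graph-at-infinity trick used in the first half of the corollary.
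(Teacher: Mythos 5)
Your proof is correct and follows essentially the same approach as the paper: the same graph-at-infinity embedding $x\mapsto(x,1/\theta(x))$ to close up $X$ and $Y$ in one higher dimension, the same use of Lemma~\ref{lem:approx} to turn $\Theta^*\circ\Xi_*$ into a homeomorphism preserving the $\mscr{C}^r$ subspaces, and for the $C$-analytic addendum the same proper real analytic (Grauert/Whitney) embedding of $U$ into $\R^{2n+1}$ combined with Cartan's Theorem~B to realize $Y''$ as a $C$-analytic set. You simply spell out a few steps the paper leaves implicit (the closedness argument for $X'$, the finitely-many-generators description of $Y''$, and the tubular-neighborhood extension of $\iota^{-1}$).
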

\begin{proof}
As $X\subset\R^m$ and $Y\subset\R^n$ are locally compact (or equivalently locally closed) sets, the differences $\cl_{\R^m}(X)\setminus X$ and $\cl_{\R^n}(Y)\setminus Y$ are respectively closed in $\R^m$ and in $\R^n$. Let $\theta:\R^m\to\R$ and $\xi:\R^n\to\R$ be $\mscr{C}^\infty$ functions such that $\theta^{-1}(0)=\cl_{\R^m}(X)\setminus X$ and $\xi^{-1}(0)=\cl_{\R^n}(Y)\setminus Y$. Define $X':=\{(x,t)\in X\times\R:\ t=1/\theta(x)\}$ and $Y':=\{(y,t)\in Y\times\R:\ t=1/\xi(y)\}$ and consider the homeomorphisms $\Theta:X'\to X$ and $\Xi:Y\to Y'$ given by $\Theta(x,t):=x$ and $\Xi(y):=(y,1/\xi(y))$. The sets $X'$ and $Y'$ are respectively closed in $\R^{m+1}$ and in $\R^{n+1}$. By Lemma \ref{lem:approx} the map $H:\mscr{C}^0(X,Y)\to\mscr{C}^0(X',Y')$ given by $H:=\Theta^*\circ\Xi_*$ is a homeomorphism. As $\Theta$, $\Theta^{-1}$, $\Xi$ and $\Xi^{-1}$ are $\mscr{C}^\infty$ maps, we deduce $H(\mscr{C}^r(X,Y))=\mscr{C}^r(X',Y')$ for each $r\in\N^*\cup\{\infty\}$, so the first part of the statement is proved.

Let us prove the second part. By Whitney's embedding theorem for the real analytic case \cite[2.15.12]{n}, there exists a real analytic embedding $\varphi:U\to\R^{2n+1}$ such that $M:=\varphi(U)$ is a closed real analytic submanifold of $\R^{2n+1}$. By Cartan's Theorem B real analytic functions on $M$ are restrictions to $M$ of real analytic functions on $\R^{2n+1}$. Thus, $Y'':=\varphi(Y)$ is a $C$-analytic subset of $\R^{2n+1}$. Denote $\Phi:Y\to Y''$ the restriction of $\varphi$ from $Y$ to $Y''$ and $H':\mscr{C}^0(X,Y)\to\mscr{C}^0(X',Y'')$ the homeomorphism $H':=\Theta^*\circ\Phi_*$. We conclude $H'(\mscr{C}^0_*(X,Y))=\mscr{C}^0_*(X',Y'')$ and $H'(\mscr{C}^\infty_*(X,Y))=\mscr{C}^\infty_*(X',Y'')$, as required. 
\end{proof}

\begin{remark} \label{rem:topologies}
Let $X\subset\R^m$ and let $Y\subset\R^n$ be (non-empty) sets such that $X$ is locally compact. Consider a locally finite covering $\{C_\ell\}_{\ell\in L}$ of $X$ by non-empty compact sets and a family $\{\veps_\ell\}_{\ell\in L}$ of positive real numbers. Making use of a suitable $\mscr{C}^0$ partition of unity on $X$, one shows the existence of a strictly positive continuous function $\veps:X\to\R^+$ such that $\max_{C_\ell}(\veps)\leq\veps_\ell$ for each $\ell\in L$. This implies that a fundamental system of neighborhoods of $f\in\mscr{C}^0(X,Y)$ for the strong $\mscr{C}^0$ topology of $\mscr{C}^0(X,Y)$ is given by the sets
\[
\mc{N}(f,\{C_\ell\}_{\ell\in L},\{\veps_\ell\}_{\ell\in L}):=\{g\in\mscr{C}^0(X,Y) : \|g(x)-f(x)\|_n<\veps_\ell \;\; \forall\ell\in L,\, \forall x\in C_\ell\},
\]
where $\{C_\ell\}_{\ell\in L}$ runs over the locally finite coverings of $X$ by non-empty compact sets and $\{\veps_\ell\}_{\ell\in L}$ runs over the families of positive real numbers with the same set $L$ of indices. $\sqbullet$
\end{remark}

%%%%%%%

\section{Proofs of Theorem \ref{thm:main1} and Corollary \ref{cor:KP}}\label{s3}

In this section we develop first all the machinery we need to prove Theorem \ref{thm:main1}: 
\begin{itemize}
\item a variant of the general simplicial approximation theorem (that appears in \S\ref{zee}),
\item a `shrink-widen' covering and approximation technique (that appears in \S\ref{swt}),
\end{itemize}
and after we approach its proof (see \S\ref{thm11}). Finally, we prove Corollary \ref{cor:KP} and Theorems \ref{thm:relative-discrete} and \ref{thm:relative-locally-constant} (see \S\ref{subsec:rlc}, \S\ref{subsec:rd} and \S\ref{co14}) in the required order. A weaker `finite' version of the `shrink-widen' covering and approximation technique, that we present here in \S\ref{swt}, is contained in our manuscript \cite{fg}. 
%%%

\subsection{A variant of the general simplicial approximation theorem}\label{zee}

Given a locally finite simplicial complex $K$ of some $\R^p$, a \em subdivision $K'$ of $K$ \em is a locally finite simplicial complex $K'$ of $\R^p$ such that $|K'|=|K|$ and each simplex of $K'$ is a subset of some simplex of $K$. A particular case of subdivision of $K$ is the first barycentric subdivision $\mr{sd}(K)$ of $K$. We denote $\mr{sd}^k(K):=\mr{sd}(\mr{sd}^{k-1}(K))$ the \em $k^{\mathit{th}}$ barycentric subdivision of $K$ \em for $k\geq1$, where $\mr{sd}^0(K):=K$.

Let $L$ be a locally finite simplicial complex of some $\R^q$ and let $F:|K|\to|L|$ be a continuous map. A simplicial map $F^\bullet:|K|\to|L|$ is said to be a {\it simplicial approximation of $F$} if $F(\mr{St}(v,K))\subset\mr{St}(F^\bullet(v),L)$ for each vertex $v$ of $K$. If $w\in|L|$, the \em carrier of $w$ in $L$ \em is the unique simplex $\tau\in L$ such that $w\in\tau^0$. The classical simplicial approximation theorem asserts:

\begin{thm}[\cite{alexander}]\label{thm:c-s-a-th}
If $K$ and $L$ are finite simplicial complexes, there exists a natural number $k$ such that $F$ has a simplicial approximation $F^\bullet:|\mr{sd}^k(K)|\to|L|$. In addition, if $w\in|K|$ and $\tau$ is the carrier of $F(w)$ in $L$, then $F^\bullet(w)\in\tau$.
\end{thm}

As an immediate consequence: 

\begin{cor}\label{cor:s-approx}
If $K$ and $L$ are finite simplicial complexes and $\varepsilon>0$ is a positive real number, then there exist two natural numbers $\kappa$ and $\ell$ and a simplicial map $F^*:|\mr{sd}^{\kappa}(K)|\to|\mr{sd}^\ell(L)|$ such that $\|F^*(w)-F(w)\|_q<\varepsilon$ for each $w\in|\mr{sd}^{\kappa}(K)|=|K|$.
\end{cor}

We need to extend the latter result for locally finite simplicial complexes $K$ and $L$ with respect to the strong $\mscr{C}^0$ topology of $\mscr{C}^0(|K|,|L|)$.

Let $K$ and $L$ be arbitrary locally finite simplicial complexes and let $F:|K|\to|L|$ be a continuous map. We say that $F$ satisfies the {\it star condition (relative to $K$ and $L$)} if for each vertex $v$ of $K$ there exists a vertex $w$ of $L$ such that $F(\mr{St}(v,K))\subset\mr{St}(w,L)$.

\begin{lem}[{\cite[Lem.14.1(a)(b)]{mu5}}] \label{lem:star-condition}
If the continuous map $F:|K|\to|L|$ satisfies the star condition, then it has a simplicial approximation $F^\bullet:|K|\to|L|$.
\end{lem}

Given two coverings $\mc{A}$ and $\mc{B}$ of $|K|$, we say that {\it $\mc{B}$ refines $\mc{A}$} if for each $B\in\mc{B}$, there exists $A\in\mc{A}$ such that $B\subset A$. If $v$ is a vertex of $K$, the {\it closed star $\ol{\mr{St}}(v,K)$ of $v$ in $K$} is defined as the closure of $\mr{St}(v,K)$ in $|K|$. Observe that $\ol{\mr{St}}(v,K)$ is the union of all simplices of $K$ having $v$ as a vertex. In particular, it is the realization of a simplicial subcomplex of $K$.

\begin{thm}[{\cite[Thm.16.4]{mu5}}]\label{thm:refine}
Let $K$ be a locally finite simplicial complex and let $\mc{A}$ be an open covering of $|K|$. Then there exists a subdivision $K'$ of $K$ such that the collection of closed stars $\overline{\mr{St}}(v,K')$, where $v$ ranges over the vertices of $K'$, refines $\mc{A}$. 
\end{thm}

If we define $\mc{A}$ as the collection of $F^{-1}(\mr{St}(w,L))$, where $w$ ranges over the vertices of $L$, then there exists by Theorem \ref{thm:refine} a subdivision $K'$ of $K$ whose closed stars refines $\mc{A}$. Consequently, $F$ satisfies the star condition relative to $K'$ and $L$ and Lemma \ref{lem:star-condition} implies:

\begin{thm}[General simplicial approximation {\cite[Thm.16.5]{mu5}}] \label{thm:g-s-a-th}
Given a continuous map $F:|K|\to|L|$ between locally compact polyhedra, there exists a subdivision $K'$ of $K$ such that $F$ has a simplicial approximation $F^\bullet:|K'|\to|L|$.
\end{thm}

As pointed out above, we need a suitable version of the preceding theorem that takes into account, not only simplicial approximation, but also strong $\mscr{C}^0$ approximation: as Corollary \ref{cor:s-approx} does with respect to the classical simplicial approximation Theorem \ref{thm:c-s-a-th}. To do this, we introduce the notion of weakly simplicial map. 

\begin{define}
Let $K$ and $L$ be locally finite simplicial complexes and let $F:|K|\to|L|$ be a continuous map. Suppose $|K|\subset\R^p$ and $|L|\subset\R^q$. We say that $F$ is \emph{weakly simplicial} if, for each simplex $\sigma\in K$, there exist a simplex $\xi_\sigma\in L$ and an affine map $A_\sigma:\R^p\to\R^q$ such that $F(\sigma)\subset\xi_\sigma$ and $F(x)=A_\sigma(x)$ for each $x\in\sigma$.
\end{define}

Observe that each weakly simplicial map $F:|K|\to|L|$ is uniquely determined by their values on the vertices of $K$. Evidently, each simplicial map is weakly simplicial. On the contrary, the map $F:\{0\}\to[-1,1]$, $0\mapsto0$ is an easy example of a weakly simplicial map between polyhedra of $\R$ which is not simplicial, if $[-1,1]$ is the realization of the simplicial complex $K:=\{\{-1\},\{1\},[-1,1]\}$. 

Our variant of the general simplicial approximation Theorem \ref{thm:g-s-a-th} is the following.

\begin{thm}[Weakly simplicial approximation] \label{thm:zeeman}
Let $K$ and $L$ be locally finite simplicial complexes and let $F:|K|\to|L|$ be a continuous map. Assume $|L|\subset\R^q$. Then, for each strictly positive continuous fun\-ction $\veps:|K|\to\R^+$, there exist a subdivision $K'$ of $K$ and a weakly simplicial map $F^*:|K'|\to|L|$ such that 
\[
\text{$\|F^*(w)-F(w)\|_q<\veps(w)\,$ for each $w\in|K'|=|K|$}.
\]
In addition, if $w\in|K|$ and $\tau$ is the carrier of $F(w)$ in $L$, then $F^*(w)\in\tau$.
\end{thm}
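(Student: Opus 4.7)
My plan is to reduce the locally finite case to Zeeman's classical theorem on finite pairs (Theorem \ref{thm:zeeman0}) by an exhaustion-plus-induction argument. Since $K$ is locally finite and $H$ is finite, I would first build an ascending chain of finite subcomplexes $H = K_0 \subset K_1 \subset K_2 \subset \cdots$ of $K$ with $\bigcup_n K_n = K$ and the buffer property that every simplex of $K$ meeting $|K_n|$ is contained in $K_{n+1}$; this is arrangeable because $K$ is locally finite. Using compactness of $|K_n|$ and strict positivity of $\varepsilon$, set $\delta_n := \tfrac{1}{2}\min_{w\in|K_n|}\varepsilon(w) > 0$.

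I would then construct, by induction on $n$, a subdivision $K'_n$ of $K_n$ modulo $H$ with $K'_{n-1}$ a subcomplex of $K'_n$, together with a weakly simplicial map $F_n : |K'_n| \to |L|$ satisfying $F_n|_H = F|_H$, $F_n|_{|K'_{n-1}|} = F_{n-1}$, and $\|F_n(w) - F(w)\|_q < \delta_n$ for all $w \in |K_n|$. The base case $n=0$ is immediate with $K'_0 := H$ and $F_0 := F|_H$. For the inductive step, I would first manufacture a continuous auxiliary map $\tilde F_n : |K_n| \to |L|$ agreeing with $F_{n-1}$ on $|K_{n-1}|$ and $(\delta_n/2)$-close to $F$ on $|K_n|$, and then apply Theorem \ref{thm:zeeman0} to the pair $(K_n, K'_{n-1})$, with target $|\mr{sd}^{\ell_{n-1}}(L)|$ (a finite iterated barycentric subdivision of $L$ fine enough to make $F_{n-1}$ simplicial after a possible further refinement of $K'_{n-1}$ away from $H$, using that the $\ell(\sigma)$ are bounded on the finite complex $K'_{n-1}$ and that $\ell(\sigma)=0$ on $H$) and tolerance $\delta_n/2$. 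The output is simplicial as a map into $|\mr{sd}^{\ell_n}(L)|$ for some $\ell_n$, hence weakly simplicial as a map into $|L|$. Setting $K' := \bigcup_n K'_n$ and $F^* := \bigcup_n F_n$ gives the required map, and the estimate $\|F^* - F\|_q < \delta_n \le \varepsilon$ on each $|K_n|$ yields the global pointwise bound.

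The principal obstacle is the construction of the auxiliary map $\tilde F_n$. Setting $\tilde F_n := F_{n-1}$ on $|K_{n-1}|$ and $\tilde F_n := F$ on $|K_n| \setminus |K_{n-1}|$ is discontinuous on $|\partial K_{n-1}|$ since $F_{n-1}$ and $F$ generally differ by up to $\delta_{n-1}$ there. I would resolve this by exploiting that $|L|$, being a locally compact polyhedron, is an absolute neighborhood retract: fix a neighborhood $U \supset |L|$ in $\R^q$ with a retraction $\rho : U \to |L|$, extend $F_{n-1}$ continuously to some $\bar F_{n-1} : |K_n| \to |L|$ using the ANR extension property on the closed subset $|K_{n-1}|$, and choose a thin open neighborhood $V$ of $|K_{n-1}|$ inside $|K_n|$ on which $\bar F_{n-1}$ remains $(\delta_n/4)$-close to $F$ (possible by continuity and the inductive bound). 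Using a continuous function $\lambda : |K_n| \to [0,1]$ that vanishes on $|K_{n-1}|$ and equals $1$ outside $V$, the interpolated map $\tilde F_n(w) := \rho\bigl((1-\lambda(w))\bar F_{n-1}(w) + \lambda(w) F(w)\bigr)$ is continuous, agrees with $F_{n-1}$ on $|K_{n-1}|$, equals $F$ off $V$, and is $(\delta_n/2)$-close to $F$ globally, provided $V$ is chosen small enough that the straight-line segment between $\bar F_{n-1}(w)$ and $F(w)$ lies in $U$. With $\tilde F_n$ in hand Zeeman's theorem applies as indicated, completing the induction.
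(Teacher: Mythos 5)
Your overall strategy — exhaust $K$ by finite subcomplexes, apply the finite relative Zeeman theorem at each stage, splice with an ANR retraction, and glue — is the same as the paper's. But the execution differs in a way that matters. You interpolate between $F_{n-1}$ and $F$ \emph{before} applying Zeeman (to produce $\tilde F_n$), and you keep the strong agreement $F_n=F_{n-1}$ on all of $|K_{n-1}|$. The paper does the opposite: it applies Zeeman first and then interpolates the Zeeman output $G$ with the previous stage $F_m$ via a bump function $\varphi$ (equal to $1$ on $P_m$, supported in $\Int_P(P_{m+1})$) followed by the retraction $\varrho$; it only requires $F_n=F_{n-1}$ on $P_{n-2}$, deliberately leaving a buffer layer $P_{n-1}\setminus P_{n-2}$ where $F_n$ may differ from $F_{n-1}$. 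That ``lag of $2$'' is what allows the paper to carry the invariant ``$F_n$ restricted to $|K^n_{n-1}|$ is simplicial into a single fixed subdivision $L^n$ of $L$'' through the induction, and this is precisely the hypothesis that the finite Zeeman theorem needs at the next stage. Your induction only preserves ``$F_n$ is weakly simplicial,'' which is too weak.

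Here is the concrete gap. To apply Theorem~\ref{thm:zeeman0} to $(\tilde F_n,K_n,K'_{n-1})$ you must make $\tilde F_n|_{K'_{n-1}}=F_{n-1}$ \emph{simplicial} into one fixed complex. You propose to set $\ell_{n-1}:=\max_{\sigma\in K'_{n-1}}\ell(\sigma)$ and pass to a further subdivision of $K'_{n-1}$ ``away from $H$'' so that $F_{n-1}$ becomes simplicial into $\mr{sd}^{\ell_{n-1}}(L)$. This cannot work without touching $H$. Indeed, suppose some simplex $\sigma$ of $H$ with $\dim\sigma\ge1$ is not collapsed to a vertex by $F$. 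Then $F|_\sigma$ maps the vertex set of $\sigma$ onto the vertex set of a simplex $\tau$ of $L$ with $\dim\tau\ge1$. For $\ell\ge1$, every simplex of $\mr{sd}^\ell(L)$ contained in $\tau$ has diameter strictly smaller than that of $\tau$, while the vertex set of $\tau$ realizes the diameter of $\tau$; hence the vertices of $\tau$ do not span a simplex of $\mr{sd}^\ell(L)$, and $F|_\sigma$ is not simplicial into $\mr{sd}^{\ell_{n-1}}(L)$ once $\ell_{n-1}\ge1$. Making it so would require subdividing $\sigma\in H$, which the ``modulo $H$'' constraint forbids. So either $\ell_{n-1}=0$ at every stage (and you lose the metric estimate), or your inductive hypothesis is not actually available to Zeeman. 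To repair the argument you would have to strengthen your invariant along the lines of the paper's condition (i), keep track of a single current target subdivision $L^n$, and use the post-Zeeman interpolation together with the lag-of-two agreement so that the re-triangulation of the ``old'' region happens in a buffer zone rather than on the frozen set $H\cup|K'_{n-1}|$ itself.
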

\begin{proof}
The proof is conducted in several steps:

\noindent{\sc Step I.} {\em Initial preparation.} 
Assume the simplicial complex $K$ is infinite, because if $K$ is finite the result follows from the classical simplicial approximation Theorem \ref{thm:c-s-a-th}.

Denote $P:=|K|\subset\R^p$ the realization of $K$. It turns out that $P$ is locally compact, but not compact. Choose a sequence $\{P_n\}_{n\in\N}$ of compact subsets of $P$ such that for each $n\in\N^*$:
\begin{itemize}
\item $P_n:=|K_n|$ is the realization of a finite subcomplex $K_n$ of $K$.
\item $\Int_P(P_n)$ is compatible with $K_n$, that is, it is the union of the interiors of some of the simplices of $K_n$.
\item $P_{n-1}\subsetneq\Int_P(P_n)$, where $P_0:=\emptyset$.
\item $\bigcup_{n\in\N}P_n=P$.
\end{itemize}

The compact sets $P_n$ can be constructed as follows. Let $\theta:\R^p\to\R$ be a continuous function such that $\theta^{-1}(0)=\cl_{\R^n}(P)\setminus P$ and consider the map $\theta^*:\R^p\setminus\theta^{-1}(0)\to\R^{p+1}$, $x\mapsto(x,1/\theta(x))$, which is a homeomorphism onto its image and satisfies $\theta^*(P)$ is a closed subset of $\R^{p+1}$. For each $r>0$ denote $\Bb(r):=P\cap(\theta^*)^{-1}(B_{p+1}(0,r))$ and $\overline{\Bb}(r):=P\cap(\theta^*)^{-1}(\overline{B}_{p+1}(0,r))$, where $B_{p+1}(0,r)$ is the open ball of $\R^{p+1}$ with center the origin and radius $r$ and $\overline{B}_{p+1}(0,r)$ is its closure in $\R^{p+1}$. Take a strictly increasing sequence of natural numbers $\{m_n\}_{n\in\N^*}$ such that $\ol{\Bb}(m_1)\neq\emptyset$ and consider the collection of compact subsets $\{\ol{\Bb}(m_n)\}_{n\in\N^*}$ of $P$. 

For each $n\in\N^*$ define ${\mathcal V}_n$ as the collection of the vertices $v$ of $K$ whose open stars $\mr{St}(v,K)$ meet $\overline{\Bb}(m_n)$. Let $K_n$ be the subcomplex of $K$ consisting of all simplices $\sigma$ such that $\mc{V}_n$ contains a vertex of $\sigma$, and all their faces. Define $P_n:=|K_n|$. Observe that $P_n=\bigcup_{v\in\mc{V}_n}\ol{\mr{St}}(v,K)$ and $\overline{\Bb}(m_n)\subset\Int_P(P_n)$. We may assume that $P_n\subset\Bb(m_{n+1})$ (changing $m_{n+1}$ by a bigger integer if necessary). Set $K_0:=\emptyset$, $P_0:=|K_0|=\emptyset$ and $P_{-1}:=\emptyset$. The compact sets $P_n$ satisfy the required conditions. Only the second property requires some comments. Pick a point $x\in\Int_P(P_n)$ and let $\sigma\in K_n$ be the carrier of $x$. Let us check: $\sigma^0\subset\Int_P(P_n)$. Once this is proved, $\Int_P(P_n)$ is the union of the interiors of some of the simplices of $K_n$.

The star ${\mr{St}}(\sigma,K)$ is the union of the interiors of all the simplices $\lambda\in K$ that have $\sigma$ as one of their faces. As $x\in\Int_P(P_n)$, there exists an open ball $B_p(x,\delta)$ centered at $x$ and radius $\delta>0$ such that $B_p(x,\delta)\cap P\subset P_n$. Observe that $B_p(x,\delta)\cap P$ meet all the simplices $\lambda\in K$ that have $\sigma$ as one of their faces. As $P_n$ is the realization of the simplicial subcomplex $K_n$, the simplices $\lambda\in K$ that have $\sigma$ as one of their faces belong to $K_n$. Consequently, ${\mr{St}}(\sigma,K)\subset P_n$ and as it is an open subset of $P$, we conclude $\sigma^0\subset{\mr{St}}(\sigma,K)\subset\Int_P(P_n)$.

For each $n\in\N$ let $U_n$ be an open subset of $P$ such that 
\[
P_n\subsetneq U_n\subset\cl_P(U_n)\subset\Int_P(P_{n+1}).
\]
We set $U_{-1}=U_{-2}:=\emptyset$. Let $\{\epsilon_n\}_{n\in\N}$ be the non-increasing sequence of positive real numbers
\[
\text{$\epsilon_n:=\min_{w\in P_n}\{\veps(w)\}>0\,$ for each $n\in\N^*$}
\]
and set $\epsilon_0:=\epsilon_1$.

\noindent{\sc Step II.} {\em Construction of a suitable covering.} 
For each $n\in\N$ let $L_n:=\mr{sd}^{\ell_n}(L)$ be an iterated barycentric subdivision of $L$ such that $\ell_n\leq\ell_{n+1}$ and:
\begin{equation}\label{eq:a}
\text{\em The diameters of all simplices $\xi$ of $L_n$ with $\xi\cap F(P_n\setminus\Int_P(P_{n-1}))\neq\emptyset$ are $<\epsilon_n$\em.}
\end{equation}

For each vertex $v$ of $L_n$ consider the open star $\mr{St}(v,L_n)$ of $v$ in $L_n$ and define the open subset $V_{n,v}$ of $|K|$ given by 
\[
V_{n,v}:=F^{-1}(\mr{St}(v,L_n))\cap(\Int_P(P_n)\setminus\cl_P(U_{n-2})).
\]
We claim: \em ${\mathcal A}:=\{V_{n,v}:\ n\in\N,\ v \text{ is a vertex of $L_n$}\}$ is an open covering of $P$\em.

Pick a point $x_0\in P$. Then there exists $n\in\N$ such that 
\[
x_0\in\Int(P_n)\setminus\Int(P_{n-1})\subset\Int_P(P_n)\setminus\cl_P(U_{n-2}).
\]
In addition, $F(x_0)\in|L|=|L_n|$. As $\{\mr{St}(v,L_n):\ v \text{ is a vertex of $L_n$}\}$ is an open covering of $|L|$, there exists a vertex $v$ of $L_n$ such that $F(x_0)\in\mr{St}(v,L_n)$, so $x_0\in V_{n,v}$, as claimed.
 
By Theorem \ref{thm:refine} there exists a subdivision $K'$ of $K$ such that the collection of closed stars $\ol{\mr{St}}(u,K')$, where $u$ ranges over the vertices of $K'$, refines ${\mathcal A}$. Observe that $K'$ induces subdivisions $K_n'$ of $K_n$ for each $n\in\N^*$.

\noindent{\sc Step III.} {\em Construction of the weakly simplicial map.}
For each $n\in\N$, let us consider the finite subcomplex $T_n$ of $K$ defined by 
\[
T_n:=\{\sigma\in K_n':\ \sigma\subset P_n\setminus\Int_P(P_{n-1})\}.
\]
We claim: {\it $|T_n|=P_n\setminus\Int_P(P_{n-1})$ and consequently $|T_{n-1}|\cap|T_n|=P_{n-1}\setminus\Int_P(P_{n-1})$}. 

Let $x\in P_n\setminus\Int_P(P_{n-1})$, let $\sigma$ be the carrier of $x$ in $K_n'$ and let $\tau$ be the carrier of $x$ in $K_n$. It holds $\sigma\subset\tau$. It is enough to check: $\tau\subset P_n\setminus\Int_P(P_{n-1})$. As $P_n=|K_n|$, we have $\tau\in K_n$, so $\tau\subset P_n$. As $\Int_P(P_{n-1})$ is the union of the interiors of some of the simplices of $K_{n-1}$, either $\tau^0\subset\Int_P(P_{n-1})$ or $\tau\cap\Int_P(P_{n-1})=\varnothing$. As $x\in\tau^0\setminus\Int_P(P_{n-1})$, we conclude $\tau\cap\Int_P(P_{n-1})=\varnothing$ and the claim follows.

We construct inductively weakly simplicial maps $F_n^*:|T_n|\to|L_n|$ satisfying: \em $F_n^*|_{|T_n|\cap|T_{n-1}|}=F_{n-1}^*|_{|T_n|\cap|T_{n-1}|}$ and for each vertex $u$ of $T_n$ it holds: 
\begin{itemize}
\item If $u\in\Int_P(P_n)\setminus\Int_P(P_{n-1})$\em, we have two possibilities: \em either $F_n^*(u)$ is a vertex of $L_n$ such that $F(\ol{\mr{St}}(u,K'))\subset\mr{St}(F_n^*(u),L_n)$ or it is a vertex of $L_{n+1}$ such that $F(\ol{\mr{St}}(u,K'))\subset\mr{St}(F_n^*(u),L_{n+1})$. 
\item If $u\in P_n\setminus\Int_P(P_n)$\em, we have only one possibility: \em $F_n^*(u)$ is a vertex of $L_{n+1}$ such that $F(\ol{\mr{St}}(u,K'))\subset\mr{St}(F_n^*(u),L_{n+1})$.
\end{itemize}\em

Fix a vertex $u$ of $T_n$ and consider the following two cases:

\noindent{\sc Case 1.} If $u\in P_n\setminus\Int_P(P_n)$, there exists a vertex $v$ of $L_{n+1}$ such that $F(\ol{\mr{St}}(u,K'))\subset\mr{St}(v,L_{n+1})$.

\noindent{\sc Case 2.} If $u\in\Int_P(P_n)\setminus\Int_P(P_{n-1})$, then $u$ is a point of $\Int_P(P_n)\setminus\cl_P(U_{n-2})$. In addition, $u$ can also belong to $\Int_P(P_{n+1})\setminus\cl_P(U_{n-1})$ because $\Int_P(P_n)\setminus\cl_P(U_{n-1})\neq\emptyset$. In any case, there exist: 
\begin{itemize}
\item a vertex $v\in L_n$ such that $F(\ol{\mr{St}}(u,K'))\subset\mr{St}(v,L_n)$ and/or
\item a vertex $v'\in L_{n+1}$ such that $F(\ol{\mr{St}}(u,K'))\subset\mr{St}(v',L_{n+1})$.
\end{itemize} 
Consequently, with this procedure we cannot construct a priori a simplicial map, because the vertices $v,v'$ could not belong to the same iterated barycentric subdivision of $L$. This is why we construct inductively a weakly simplicial map.

As $|T_0|=P_0=\varnothing$, in the first induction step we have nothing to do. Let $n\geq1$ and assume we have already constructed the map $F_{n-1}^*$ satisfying the required conditions. We construct next the map $F_n^*$ and to that end we define first $F_n^*$ on the vertices of $T_n$. 

Fix $u$ a vertex of $T_n$ and suppose first $u\in P_{n-1}\setminus\Int_P(P_{n-1})$. As
\[
P_{n-1}\setminus\Int_P(P_{n-1})=|T_{n-1}|\cap|T_n|, 
\]
$u$ is a vertex of the simplicial subcomplex $T_{n-1}\cap T_n$. We define $F_n^*(u):=F_{n-1}^*(u)$. By induction hypothesis $F_n^*(u)=F_{n-1}^*(u)$ is a vertex of $L_n$ such that $F(\ol{\mr{St}}(u,K'))\subset\mr{St}(F_n^*(u),L_n)$.

Suppose next $u\in\Int_P(P_n)\setminus P_{n-1}$. As $\Int_P(P_n)\setminus P_{n-1}\subset\Int_P(P_n)\setminus\Int_P(P_{n-1})$, there exists a vertex $v\in L_n$ such that $F(\ol{\mr{St}}(u,K'))\subset\mr{St}(v,L_n)$ and/or there exists a vertex $v'\in L_{n+1}$ such that $F(\ol{\mr{St}}(u,K'))\subset\mr{St}(v',L_{n+1})$ (as pointed out above). Choose one of the mentioned vertices $v$ or $v'$ and define either $F_n^*(u):=v$ or $F_n^*(u):=v'$.

Finally, if $u\in P_n\setminus\Int_P(P_n)$, we choose a vertex $v\in L_{n+1}$ such that $F(\ol{\mr{St}}(u,K'))\subset\mr{St}(v,L_{n+1})$ and define $F_n^*(u)=v$. 

{\it Pick a simplex $\sigma$ of $T_n$.} We claim: \em if $\sigma$ has vertices $u_1,\ldots,u_r$, there exists a simplex $\xi\in L_n$ such that the points $v_1:=F_n^*(u_1),\ldots,v_r:=F_n^*(u_r)$ belong to $\xi$\em. 

If $v$ is a vertex of $L_n$, then $\mr{St}(v,L_{n+1})\subset\mr{St}(v,L_n)$. After rearranging the indices if necessary, we assume that for some $s\in\{1,\ldots,r\}$ it holds:
\begin{itemize}
 \item $\{v_\ell\}\in L_n$ and $F(\ol{\mr{St}}(u_\ell,K'))\subset\mr{St}(v_\ell,L_n)$ if $\ell\in\{1,\ldots,s\}$,
 \item $\{v_\ell\}\in L_{n+1}\setminus L_n$ and $F(\ol{\mr{St}}(u_\ell,K'))\subset\mr{St}(v_\ell,L_{n+1})$ if $\ell\in\{s+1,\ldots,r\}$,
\end{itemize}
where the latter case is omitted if $s=r$. 

Pick a point $x\in\sigma^0$ and let $\xi$ be the carrier of $F(x)$ in $L_n$. As $x\in\bigcap_{\ell=1}^r\ol{\mr{St}}(u_\ell,K')$, it holds
\[
F(x)\in\bigcap_{\ell=1}^rF(\ol{\mr{St}}(u_\ell,K'))\subset\bigcap_{\ell=1}^s\mr{St}(v_\ell,L_n)\cap\bigcap_{\ell=s+1}^r\mr{St}(v_\ell,L_{n+1}).
\]
Thus, $v_1,\ldots,v_s$ are vertices of $\xi$ and $v_{s+1},\ldots,v_r$ are vertices of the iterated barycentric subdivision $\mr{sd}^{\ell_{n+1}-\ell_n}(\widehat{\xi}\,)$ of the simplicial complex $\widehat{\xi}$ constituted by the simplex $\xi$ and all its faces. Consequently, $v_1,\ldots,v_r\in\xi$, as claimed. 

We keep the notations already introduced and define $F_n^*:|T_n|\to|L_n|$ (simplex by simplex) as one can expect: \em Let $\lambda_1,\ldots,\lambda_r>0$ be such that $x=\sum_{i=1}^r\lambda_iu_i\in\sigma^0$ (where $u_1,\ldots,u_r$ are the vertices of $\sigma$) and $\sum_{i=1}^r\lambda_i=1$. Then
\[
F_n^*(x):=\sum_{i=1}^r\lambda_iF_n^*(u_i)\in\xi.
\]\em
Thus, $F_n^*$ transforms (affinely) each simplex of $T_n$ onto a convex polyhedron contained in a simplex of $L_n$ (if $s<r$ we cannot assure that $F_n^*(\sigma)$ is a simplex because $v_{s+1},\ldots,v_r$ are not vertices of $\xi$). In addition, {\it if $\tau$ is the carrier of $F(x)$ in $L$, then $F_n^*(x)\in\xi\subset\tau$}. Define 
\[
F^*:|K|\to|L|,\ x\mapsto F_n^*(x)\quad\text{if $x\in T_n$.}
\]
The previous map is well-defined, continuous and weakly simplicial because $F_n^*|_{|T_n|\cap|T_{n-1}|}=F_{n-1}^*|_{|T_n|\cap|T_{n-1}|}$ and each $F_n^*$ is continuous and weakly simplicial. By construction $F^*(x)$ belongs to the carrier $\tau$ of $F(x)$ in $L$ for each $x\in|K|$.

\noindent{\sc Step IV.} {\em Approximation.}
Pick $x\in|T_n|=P_n\setminus\Int_P(P_{n-1})$. Let $\sigma$ be the carrier of $x$ in $K'$ (or equivalently in $K'_n$) and let $\xi$ be the carrier of $F(x)$ in $L_n$. The intersection $\xi\cap F(P_n\setminus\Int_P(P_{n-1}))$ is non-empty because it contains $F(x)$. By \eqref{eq:a} the diameter of $\xi$ is strictly smaller than $\epsilon_n$. As $F^*(x)\in\xi$, we have $\|F^*(x)-F(x)\|_q<\epsilon_n\leq\veps(x)$, as required.
\end{proof}

\begin{remarks}\label{rem:rel}
(i) {\it If in the statement of Theorem \em \ref{thm:zeeman} \em $H$ is a subcomplex of $K$ such that $F(|H|)$ is contained in the set of vertices of $L$, then $F^*|_{|H|}=F|_{|H|}$}. 

Let $C$ be a connected component of $|H|$ and let $v$ be the vertex of $L$ such that $F(C)=\{v\}$. If $x\in C$, then the carrier of $F(x)$ is $v$, so the last assertion in Theorem \ref{thm:zeeman} implies $F^*(x)=v=F(x)$. Consequently $F^*|_{|H|}=F|_{|H|}$, as claimed. 

(ii) {\it If $F$ is proper in the statement of Theorem \em \ref{thm:zeeman}\em, then it is well-known that $F^*$ can be chosen simplicial}. 

This result can be proven by combining the version of simplicial approximation theorem presented in \cite[Ch.5, p.223]{b} with the following elementary fact that follows from Lemma \ref{lem:approx}: \em if $F:|K|\to|L|$ is a proper continuous map and $\veps:|K|\to\R$ is a strictly positive function, then there exists a strictly positive function $\delta:|L|\to\R$ such that $\delta(F(x))<\veps(x)$ for each $x\in|K|$\em. $\sqbullet$
\end{remarks}

%%%

\subsection{The `shrink-widen' covering and approximation technique}\label{swt}

Let $\sigma$ be a simplex of $\R^p$, let ${\rm Bd}(\sigma)$ be the boundary of $\sigma$ and let $\sigma^0$ be the interior of $\sigma$. Recall that ${\rm Bd}(\sigma)$ is the union of proper faces of $\sigma$ and $\sigma^0$ is the open simplex of $\R^p$ such that $\sigma^0=\sigma\setminus{\rm Bd}(\sigma)$. Let $b_\sigma$ be the barycenter of $\sigma$. Given $\veps\in(0,1)$ denote $h_\veps:\R^p\to\R^p,\ x\mapsto b_\sigma+(1-\veps)(x-b_\sigma)$ the homothety of $\R^p$ of center $b_\sigma$ and ratio $1-\veps$ and define the \em $(1-\veps)$-shrinking $\sigma^0_\veps$ of $\sigma^0$ \em by $\sigma^0_\veps:=h_\veps(\sigma^0)$. Note that $\cl_{\R^p}(\sigma^0_\veps)=h_\veps(\sigma)\subset\sigma^0$ for each $\veps\in(0,1)$ and $\sigma^0_\veps$ tends to $\sigma^0$ when $\veps\to 0$. In addition, $\sigma^0=\bigcup_{\veps\in (0,1)}\sigma^0_\veps$ and $\sigma^0_{\veps_2}\subset\sigma^0_{\veps_1}$ if $0<\veps_1\leq\veps_2<1$.

We fix the following notations for the rest of the subsection. Let $r\in\N^*\cup\{\infty\}$ and let $X\subset\R^m$ be a locally compact set. Suppose $X$ is \em $\mscr{C}^r$ triangulable on open simplices\em, that is, there exists a locally finite simplicial complex $K$ of some $\R^p$ and a homeomorphism $\Phi:|K|\to X$ such that: \em the set $\Phi(\sigma^0)$ is a $\mscr{C}^r$ submanifold of $\R^m$ and the restriction $\Phi|_{\sigma^0}:\sigma^0\to\Phi(\sigma^0)$ is a $\mscr{C}^r$ diffeomorphism for each open simplex $\sigma^0$ of $K$\em. Define $\Kk^0:=\{\Phi(\sigma^0)\}_{\sigma\in K}$ and $\Kk:=\{\Phi(\sigma)\}_{\sigma\in K}$. 

To lighten the notation the elements of $\Kk$ will be denoted with the letters $\sfs,\sft,\ldots$ while those of $\Kk^0$ with the letters $\sfs^0,\sft^0,\ldots$ in such a way that $\cl_{\R^m}(\sfs^0)=\sfs$. In other words, if $\sfs=\Phi(\sigma)$, then $\sfs^0=\Phi(\sigma^0)$. Moreover, we indicate $\sfs^0_\veps$ the $(1-\veps)$-shrinking of $\sfs^0=\Phi(\sigma^0)$ corresponding to $\sigma^0_\veps$ via $\Phi$, that is, $\sfs^0_\veps:=\Phi(\sigma^0_\veps)$.

Consider a $\mscr{C}^r$ tubular neighborhood $\rho_{\sfs^0}:T_{\sfs^0}\to\sfs^0$ of $\sfs^0$ in $\R^m$ and for each $\eta>0$ the open subset $T_{\sfs^0,\eta}:=\{x\in T_{\sfs^0}:\ \|x-\rho_{\sfs^0}(x)\|_m<\eta\}$ of $\R^m$. We write $\sfs^0_{\veps,\eta}$ to denote the \em $\eta$-widening of $\sfs^0_\veps$ with respect to $\rho_{\sfs^0}$\em, which is the open neighborhood $\sfs^0_{\veps,\eta}:=(\rho_{\sfs^0})^{-1}(\sfs^0_\veps)\cap T_{\sfs^0,\eta}$ of $\sfs^0_\veps$ in $\R^m$. If $C$ is a closed subset of $\R^m$ such that $C\cap\cl_{\R^m}(\sfs^0_\veps)=\varnothing$, there exists $\eta>0$ such that $C\cap\cl_{\R^m}(\sfs^0_{\veps,\eta})=\varnothing$ (recall that $\cl_{\R^m}(\sfs^0_\veps)$ is compact). Denote $\rho_{\sfs^0,\veps,\eta}:=\rho_{\sfs^0}|_{\sfs^0_{\veps,\eta}}:\sfs^0_{\veps,\eta}\cap X\to\sfs^0_\veps$ the $\mscr{C}^r$ retraction obtained restricting $\rho_{\sfs^0}$ from $\sfs^0_{\veps,\eta}\cap X$ to $\sfs^0_\veps$.

\begin{lem}\label{lem:covering}
Fix a strictly positive function $\eta:\Kk^0\to\R^+$. Then for each $\sfs^0\in\Kk^0$ there exist a non-empty open subset $V_{\sfs^0}$ of~$\sfs^0$ (a `shrinking' of $\sfs^0$), an open neighborhood $U_{\sfs^0}$ of $V_{\sfs^0}$ in $X$ (a `widening' of~$V_{\sfs^0}$) satisfying $V_{\sfs^0}=U_{\sfs^0}\cap\sfs^0$ and a $\mscr{C}^r$ retraction $r_{\sfs^0}:U_{\sfs^0}\to V_{\sfs^0}$ such that:
\begin{itemize}
\item[(i)] $\{U_{\sfs^0}\}_{\sfs^0\in\Kk^0}$ is a locally finite open covering of $X$.
\item[(ii)] $\cl_{\R^m}(U_{\sfs^0})\cap\sft=\varnothing$ for each pair $(\sfs^0,\sft)\in\Kk^0\times\Kk$ satisfying ${\sfs^0}\cap\sft=\varnothing$.
\item[(iii)] $\sup_{x\in U_{\sfs^0}}\{\|x-r_{\sfs^0}(x)\|_m\}<\eta(\sfs^0)$ for each $\sfs^0\in\Kk^0$.
\end{itemize}
\end{lem}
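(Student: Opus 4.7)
The natural strategy is to set
\[
V_{\sfs^0}:=\sfs^0_{\veps_\sfs},\qquad U_{\sfs^0}:=\sfs^0_{\veps_\sfs,\eta_\sfs}\cap X,\qquad r_{\sfs^0}:=\rho_{\sfs^0}|_{U_{\sfs^0}},
\]
and to pick the parameters $\veps_\sfs\in(0,1)$ and $\eta_\sfs>0$ inductively on $d:=\dim\sfs$. The identity $\sfs^0_{\veps,\eta}\cap\sfs^0=\sfs^0_\veps$ (consequence of $\rho_{\sfs^0}|_{\sfs^0}=\id$) gives for free $V_{\sfs^0}=U_{\sfs^0}\cap\sfs^0$, while the inclusion $\rho_{\sfs^0}(\sfs^0_{\veps,\eta})\subset\sfs^0_\veps$ makes $r_{\sfs^0}$ a $\mscr{C}^r$ retraction of $U_{\sfs^0}$ onto $V_{\sfs^0}$. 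Conditions (i)--(iii) are then secured by the choice of parameters.

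Assume inductively that $V_{\sft^0}$ and $U_{\sft^0}$ have been built for all $\sft\in\Kk$ with $\dim\sft<d$, in such a way that $\sft\subset\bigcup_{\sft'\leq\sft}U_{(\sft')^0}$ and the analogues of (ii)--(iii) hold (a vacuous hypothesis when $d=0$). For $\sfs\in\Kk$ with $\dim\sfs=d$, the inductive covering property yields
\[
\partial\sfs=\bigcup_{\sft<\sfs}\sft\subset\bigcup_{\sft<\sfs}\bigcup_{\sft'\leq\sft}U_{(\sft')^0}=\bigcup_{\sft'<\sfs}U_{(\sft')^0},
\]
which is open in $X$; hence $K_\sfs:=\sfs\setminus\bigcup_{\sft<\sfs}U_{\sft^0}$ is a compact subset of $\sfs^0$. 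Choose $\veps_\sfs\in(0,1)$ with $\sfs^0_{\veps_\sfs}\supset K_\sfs$ (possible since $\sfs^0=\bigcup_{\veps\in(0,1)}\sfs^0_\veps$ with nested shrinkings); then $\sfs\subset V_{\sfs^0}\cup\bigcup_{\sft<\sfs}U_{\sft^0}\subset\bigcup_{\sft\leq\sfs}U_{\sft^0}$, preserving the inductive covering.

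To fix $\eta_\sfs$, note $\cl_{\R^m}(U_{\sfs^0})\subset\{y\in\R^m:\dist_{\R^m}(y,\cl(\sfs^0_{\veps_\sfs}))\leq\eta_\sfs\}$. By local compactness of $X$, pick an open $W\subset\R^m$ with $\cl(\sfs^0_{\veps_\sfs})\subset W$ and $\cl_X(W\cap X)$ compact in $X$. Local finiteness of $\Kk$ leaves only finitely many $\sft_1,\dots,\sft_k\in\Kk$ meeting $\cl_X(W\cap X)$, and every other $\sft\in\Kk$ lies in $\R^m\setminus W$, hence at distance at least $\dist_{\R^m}(\cl(\sfs^0_{\veps_\sfs}),\R^m\setminus W)>0$ from $\cl(\sfs^0_{\veps_\sfs})$. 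Each $\sft_i$ with $\sfs\not\leq\sft_i$ is a compact subset of $\R^m$ disjoint from $\cl(\sfs^0_{\veps_\sfs})\subset\sfs^0$, hence also at positive distance from it. Taking $\eta_\sfs$ strictly less than each of these finitely many positive quantities and less than $\eta(\sfs^0)$ secures (ii) and (iii).

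Finally, the local finiteness half of (i) follows formally from (ii). Given $x\in X$ with a compact neighborhood $B$ in $X$, only finitely many $\sft_1,\dots,\sft_\ell\in\Kk$ meet $B$; if $U_{\sfs^0}\cap B\neq\varnothing$, then $U_{\sfs^0}\cap\sft_i^0\neq\varnothing$ for some~$i$ (as $X$ is the disjoint union of its open simplices), whence the contrapositive of (ii) forces $\sfs\leq\sft_i$, so $\sfs$ ranges over the finite set of faces of $\sft_1,\dots,\sft_\ell$. The main subtlety is the tension between $\veps_\sfs$ (small enough to keep $\sfs^0_{\veps_\sfs}$ well inside $\sfs^0$ yet big enough to absorb $K_\sfs$) and $\eta_\sfs$ (small enough for (ii) and (iii)); the happy feature is that once (ii) is in place, local finiteness falls out for free via the star argument above, effectively decoupling the two constraints.
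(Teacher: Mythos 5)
Your proposal is correct and follows essentially the same approach as the paper: the same shrink-and-widen scheme $V_{\sfs^0}=\sfs^0_{\veps_\sfs}$, $U_{\sfs^0}=\sfs^0_{\veps_\sfs,\eta_\sfs}\cap X$, $r_{\sfs^0}=\rho_{\sfs^0}|_{U_{\sfs^0}}$, the same induction on dimension using that the boundary $\partial\sfs$ is already covered to extract a compact $K_\sfs\subset\sfs^0$ and fix $\veps_\sfs$, the same use of local finiteness of $\Kk$ to pick $\eta_\sfs$, and the same derivation of local finiteness of $\{U_{\sfs^0}\}$ from property (ii). The only differences are cosmetic: you spell out the choice of $\eta_\sfs$ (via a compact neighborhood $W$ and finitely many nearby simplices) that the paper states more tersely, and you verify local finiteness by intersecting with an arbitrary compact neighborhood $B$ of $x$ rather than with $U_{\sfu^0}$ itself, but the underlying star argument is identical.
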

\begin{proof}
Define $d:=\max\{\dim(\sfs^0):\ \sfs^0\in\Kk^0\}\leq m$, where $\dim(\sfs^0)$ is the dimension of $\sfs^0$ as a $\mscr{C}^r$~submanifold of $\R^m$. Of course $d$ coincides with the dimension of the semialgebraic set $|K|$, which is equal to $\max\{\dim(\sigma^0):\ \sigma\in K\}$. Let $\Kk^0_e:=\{\sfs^0\in\Kk^0:\, \dim(\sfs^0)\leq e\}$ for $e\in\{0,1,\ldots,d\}$. Let us prove by induction on $e\in\{0,1,\ldots,d\}$ that: \em For each $\sfs^0\in\Kk^0_e$ there exist an open subset $U_{\sfs^0}^e$ of $X$ and a $\mscr{C}^r$ retraction $r_{\sfs^0}^e:U_{\sfs^0}^e\to V_{\sfs^0}^e:=U_{\sfs^0}^e\cap\sfs^0\neq\varnothing$ such that:
\begin{itemize}
\item[$(\mr{a})$] $\bigcup_{\sfs^0\in\Kk^0_e}\sfs^0\subset\bigcup_{\sfs^0\in\Kk^0_e}U_{\sfs^0}^e$.
\item[$(\mr{b})$] $\cl_{\R^p}(U_{\sfs^0}^e)\cap\sft=\varnothing$ for each pair $(\sfs^0,\sft)\in\Kk^0_e\times\Kk$ satisfying ${\sfs^0}\cap\sft=\varnothing$.
\item[$(\mr{c})$] $\sup_{x\in U_{\sfs^0}^e}\{\|x-r_{\sfs^0}^e(x)\|_m\}<\eta(\sfs^0)$ for each $\sfs^0\in\Kk^0_e$.
\end{itemize}\em

Suppose first $e=0$. Choose $\{v\}\in\Kk^0_0$. As the family $\Kk$ is locally finite in $X$, the union $\bigcup_{\sft\in\Kk,v\not\in\sft}\sft$ is closed in $X$ and it does not contain $v$. Consequently, there exists $\eta'_{v}\in(0,\eta(\{v\}))$ such that the open ball $B(v,2\eta'_v)$ of~$\R^m$ of center $v$ and radius $2\eta'_v$ does not meet $\bigcup_{\sft\in\Kk,v\not\in\sft}\sft$. Define~$U_{\{v\}}^0:=B(v,\eta'_v)\cap X$, $V_{\{v\}}^0:=\{v\}$ and $r_{\{v\}}^0:U_{\{v\}}^0\to V_{\{v\}}^0,\ x\mapsto v$ the constant map for each $\{v\}\in\Kk^0_0$.

Fix $e\in\{0,\ldots,d-1\}$ and suppose that the assertion is true for such an $e$. Pick $\sigma\in K$ of dimension $e+1$ and consider the compact subset
\[
C_\sigma:=\sigma\setminus\Phi^{-1}\Big(\bigcup_{\tau\in K,\tau\subset\mathrm{Bd}(\sigma)}U^e_{\Phi(\tau^0)}\Big)
\]
of $\sigma^0=\bigcup_{\veps\in (0,1)}\sigma^0_\veps$. Let $\veps(\sigma^0)\in (0,1)$ be such that $C_\sigma\subset\sigma^0_{\veps(\sigma^0)}$. If $\sfs=\Phi(\sigma)$, define $\veps(\sfs^0):=\veps(\sigma^0)$ and $\sfs^0_{\veps(\sfs^0)}:=\Phi(\sigma^0_{\veps(\sigma^0)})$. We have
\[
\textstyle
\bigcup_{\sfs^0\in\Kk^0_{e+1}}\sfs^0\subset\bigcup_{\sfs^0\in\Kk^0_e}
U_{\sfs^0}^e\cup\bigcup_{\sfs^0\in\Kk^0_{e+1}\setminus\Kk^0_e}
\sfs^0_{\veps(\sfs^0)}.
\]
If $(\sfs^0,\sft)\in(\Kk^0_{e+1}\setminus\Kk^0_e)\times\Kk$ satisfies $\sfs^0\cap\sft=\varnothing$, then $\cl_{\R^m}(\sfs^0_{\veps(\sfs^0)})\cap\sft=\varnothing$ because $\cl_{\R^m}(\sfs^0_{\veps(\sfs^0)})\subset\sfs^0$. Let $\sfs^0\in\Kk^0_{e+1}\setminus\Kk^0_e$. As the family $\Kk$ is locally finite in $X$, there exists $\eta'(\sfs^0)\in(0,\eta(\sfs^0))$ such that
\[
\cl_{\R^m}\!\big(\sfs^0_{\veps(\sfs^0),\eta'(\sfs^0)}\cap X\big)\cap\sft=\varnothing
\]
for each pair $(\sfs^0,\sft)\in(\Kk^0_{e+1}\setminus\Kk^0_e)\times\Kk$ satisfying $\sfs^0\cap\sft=\varnothing$. For each $\sfs^0\in\Kk^0_{e+1}$ define:
\begin{itemize}
\item $V_{\sfs^0}^{e+1}:=V_{\sfs^0}^e$, $U_{\sfs^0}^{e+1}:=U_{\sfs^0}^e$ and $r_{\sfs^0}^{e+1}:=r_{\sfs^0}^e$ if $\sfs^0\in\Kk^0_e$ and \vspace{.3em}
\item $V_{\sfs^0}^{e+1}:=\sfs^0_{\veps(\sfs^0)}$, $U_{\sfs^0}^{e+1}:=\sfs^0_{\veps(\sfs^0),\eta'(\sfs^0)}\cap X$ and $r_{\sfs^0}^{e+1}:=\rho_{\sfs^0,\veps(\sfs^0),\eta'(\sfs^0)}$ if $\sfs^0\in\Kk^0_{e+1}\setminus\Kk^0_e$.
\end{itemize}

The open sets $U_{\sfs^0}^{e+1}$, the non-empty sets $V_{\sfs^0}^{e+1}$ and the retractions $r_{\sfs^0}^{e+1}:U_{\sfs^0}^{e+1}\to V_{\sfs^0}^{e+1}$ for $\sfs^0\in\Kk^0_{e+1}$ satisfy conditions $(\mr{a})$ to $(\mr{c})$, as required.

Define the open subsets $U_{\sfs^0}:=U_{\sfs^0}^d$ of $X$ and the $\mscr{C}^r$ retractions $r_{\sfs^0}:=r_{\sfs^0}^d$ for each $\sfs^0\in\Kk^0_d=\Kk^0$. Evidently properties $(\mr{ii})$ and $(\mr{iii})$ hold and the family $\{U_{\sfs^0}\}_{\sfs^0\in\Kk^0}$ is a covering of $X$. It remains to show that such a family is locally finite in $X$. Let $x\in X$ and let $\sfu^0$ be the unique element of $\Kk^0$ such that $x\in\sfu^0$. For each $\sft\in\Kk$ and each $\sfs^0\in\Kk^0$ define the finite set $I_\sft:=\{\sfs^0\in\Kk^0:\ \sfs^0\subset\sft\}$ and the set $J_{\sfs^0}:=\{\sft\in\Kk:\ \sfs^0\subset\sft\}$. As the family $\Kk$ is locally finite in $X$, each set $J_{\sfs^0}$ is finite as well. If $\sft\in\Kk$ and $\sfs^0\in\Kk^0$ satisfies $U_{\sfs^0}\cap\sft\neq\varnothing$, then $\sft\in J_{\sfs^0}$ by property (ii). In particular $U_{\sfs_0}\subset\bigcup_{\sft\in J_{\sfs^0}}\sft$. 

Define the finite set $M_{\sfu^0}:=\bigcup_{\sft\in J_{\sfu^0}}I_\sft$ and the set $N_{\sfu^0}:=\{\sfs^0\in\Kk^0:\ U_{\sfu^0}\cap U_{\sfs^0}\neq\varnothing\}$. Let us show: \em $N_{\sfu^0}$ is finite \em by showing that $N_{\sfu^0}\subset M_{\sfu^0}$. This will complete the proof. If $\sfs^0\in N_{\sfu^0}$, then 
\[\textstyle
\varnothing\neq U_{\sfu^0}\cap U_{\sfs^0}\subset \bigcup_{\sft\in J_{\sfs^0}}\big(U_{\sfu^0}\cap\sft\big).
\]
Thus, there exists $\sft\in J_{\sfs^0}$ such that $U_{\sfu^0}\cap\sft\neq\varnothing$, so $\sfs^0\in I_\sft$ and $\sft\in J_{\sfu^0}$, that is, $\sfs^0\in M_{\sfu^0}$, as required.
\end{proof}

\begin{lem}\label{lem:approximation'}
Let $L$ be a locally finite simplicial complex of $\R^q$ and let $g\in\mscr{C}^0(X,|L|)$. Suppose that for each $\sft\in\Kk$ the restriction $g|_{\sft^0}$ belongs to $\mscr{C}^r(\sft^0,|L|)$ and there exists $\xi_\sft\in L$ such that $g(\sft)\subset\xi_\sft$. Then for each strictly positive continuous function $\delta:X\to\R^+$, there exists $h\in\mscr{C}^r(X,|L|)$ with the following properties:
\begin{itemize}
\item[(i)] For each $\sft\in\Kk$, there exists an open neighborhood $W_\sft$ of $\sft$ in $X$ such that $h(W_\sft)\subset\xi_\sft$.
\item[(ii)] $\|h(x)-g(x)\|_q<\delta(x)$ for each $x \in X$.
\end{itemize} 
\end{lem}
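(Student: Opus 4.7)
The plan is to construct $h$ by gluing $\mscr{C}^r$ pull-backs of $g$ along the retractions of Lemma~\ref{lem:covering} using a $\mscr{C}^r$ partition of unity. I apply Lemma~\ref{lem:covering} with a function $\eta\colon\Kk^0\to\R^+$ to be fixed below, obtaining the locally finite open cover $\{U_{\sfs^0}\}_{\sfs^0\in\Kk^0}$ of $X$ and the $\mscr{C}^r$ retractions $r_{\sfs^0}\colon U_{\sfs^0}\to V_{\sfs^0}\subset\sfs^0$. Because $g|_{\sfs^0}\in\mscr{C}^r(\sfs^0,|L|)$ and $V_{\sfs^0}\subset\sfs^0$, the composite
\[
g_{\sfs^0}\colon U_{\sfs^0}\to|L|\subset\R^q,\quad x\mapsto g(r_{\sfs^0}(x)),
\]
is $\mscr{C}^r$. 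I then pick a $\mscr{C}^r$ partition of unity $\{\varphi_{\sfs^0}\}_{\sfs^0\in\Kk^0}$ on $X$ subordinate to $\{U_{\sfs^0}\}_{\sfs^0\in\Kk^0}$ (available by extending each $U_{\sfs^0}$ to an open subset of an open neighborhood of $X$ in $\R^m$ and using the standard construction there) and set
\[
h(x):=\sum_{\sfs^0\in\Kk^0}\varphi_{\sfs^0}(x)\,g_{\sfs^0}(x),
\]
which is $\mscr{C}^r$ as a locally finite sum of $\mscr{C}^r$ functions into $\R^q$.

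To prove simultaneously that $h(X)\subset|L|$ and that (i) holds, for each $\sft\in\Kk$ define
\[
W_\sft:=X\setminus\bigcup_{\sfs^0\in\Kk^0,\ \sfs^0\cap\sft=\varnothing}\cl_X(U_{\sfs^0}).
\]
Local finiteness of $\{U_{\sfs^0}\}_{\sfs^0\in\Kk^0}$ makes the subtracted union closed in $X$, and property~(ii) of Lemma~\ref{lem:covering} yields $\sft\subset W_\sft$; thus $W_\sft$ is an open neighborhood of $\sft$ in~$X$. For $x\in W_\sft$ and any $\sfs^0$ with $\varphi_{\sfs^0}(x)\neq0$ one has $x\in U_{\sfs^0}$ and $\sfs^0\cap\sft\neq\varnothing$. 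Transferring via $\Phi^{-1}$, the elementary fact that an open simplex of a simplicial complex meeting a closed simplex must be contained in it gives $\sfs\subset\sft$, whence
\[
g_{\sfs^0}(x)=g(r_{\sfs^0}(x))\in g(\sfs)\subset g(\sft)\subset\xi_\sft.
\]
Since $\xi_\sft$ is convex, $h(x)\in\xi_\sft$, which proves (i); specialising to the unique $\sft\in\Kk$ with $x\in\sft^0$ also confirms $h(X)\subset|L|$.

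For (ii) I exploit continuity of $g$. Using Remark~\ref{rem:topologies} together with the local finiteness of $\{U_{\sfs^0}\}_{\sfs^0\in\Kk^0}$ and of $\Kk^0$, I fix $\eta\colon\Kk^0\to\R^+$ so small that
\[
\|g(x)-g(r_{\sfs^0}(x))\|_q<\delta(x)
\]
for every $\sfs^0\in\Kk^0$ and every $x\in U_{\sfs^0}$; this is possible because property~(iii) of Lemma~\ref{lem:covering} controls $\|x-r_{\sfs^0}(x)\|_m<\eta(\sfs^0)$ and $g$ is uniformly continuous on each compact subset of $X$, while $\delta$ admits a positive minimum on each such piece. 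Then $\sum_{\sfs^0}\varphi_{\sfs^0}(x)=1$ yields
\[
\|h(x)-g(x)\|_q\leq\sum_{\sfs^0\in\Kk^0}\varphi_{\sfs^0}(x)\,\|g_{\sfs^0}(x)-g(x)\|_q<\delta(x),
\]
as required. I expect the main obstacle to be the first display of the second paragraph, namely ensuring $h$ stays inside $|L|$: a convex combination of arbitrary points of $|L|$ need not remain in $|L|$, and it is precisely the ``inclusion-forcing'' property~(ii) of the shrink-widen covering that confines, at every point $x\in\sft^0$, all active pieces $g_{\sfs^0}(x)$ to the single convex simplex $\xi_\sft$ furnished by the hypothesis, where convexity then does the rest.
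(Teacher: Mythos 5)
Your overall strategy coincides with the paper's: apply Lemma~\ref{lem:covering}, pull back $g$ along the retractions $r_{\sfs^0}$, glue with a $\mscr{C}^r$ partition of unity, and confine the convex combination to $\xi_\sft$ on the neighborhoods $W_\sft$ via property~(ii) of Lemma~\ref{lem:covering}. That part of your argument is correct and essentially identical to the paper's.

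The gap is in the choice of $\eta$. You write that you ``fix $\eta\colon\Kk^0\to\R^+$ so small that $\|g(x)-g(r_{\sfs^0}(x))\|_q<\delta(x)$ for every $\sfs^0\in\Kk^0$ and every $x\in U_{\sfs^0}$,'' but the sets $U_{\sfs^0}$ and the retractions $r_{\sfs^0}$ are the \emph{output} of Lemma~\ref{lem:covering} applied to $\eta$ — the condition you impose on $\eta$ refers to objects that do not exist until $\eta$ has already been chosen. The justification you offer (``$g$ is uniformly continuous on each compact subset of $X$, while $\delta$ admits a positive minimum on each such piece'') names the right ingredients but does not say which compact sets carry the estimate, nor how the estimate becomes a constraint on $\eta(\sfs^0)$ alone, independent of the covering. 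This is precisely the delicate point that the paper handles with care.

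The circularity is broken because property~(ii) of Lemma~\ref{lem:covering} holds for \emph{every} choice of $\eta$ and already confines matters: if $x\in U_{\sfs^0}$ and $\sft^0$ is the carrier of $x$, then $\sfs^0\cap\sft\neq\varnothing$, hence $\sfs^0\subset\sft$, hence $r_{\sfs^0}(x)\in\sfs^0\subset\sft$, so $x$ and $r_{\sfs^0}(x)$ lie in the \emph{same} compact simplex $\sft$. The paper exploits this via an exhaustion $\{X_n\}$ by finite unions of elements of $\Kk$: it sets $\delta_n:=\min_{X_n}\delta$, chooses a decreasing sequence $\veps_n$ as uniform continuity moduli of $g|_{X_n}$ at error $\delta_n$, defines $n(\sfs^0)$ as the unique $n$ with $\sfs^0\subset X_n\setminus X_{n-1}$, and puts $\eta(\sfs^0):=\veps_{n(\sfs^0)+1}$ — an a priori formula in $\sfs^0$. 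Then it checks a posteriori, using property~(ii) and the exhaustion, that $x\in X_n\setminus X_{n-1}$ and $x\in U_{\sfs^0}$ force $\sfs^0$ and $r_{\sfs^0}(x)$ into $X_n$ and $n(\sfs^0)\in\{n-1,n\}$, so the bound $\veps_{n(\sfs^0)+1}\le\veps_n$ applies. You can instead localize to each simplex $\sft\in\Kk$, choosing $\eta_\sft>0$ as a modulus of continuity for $g|_\sft$ at error $\min_\sft\delta$, and set $\eta(\sfs^0):=\min\{\eta_\sft:\sfs^0\subset\sft\}$ (a finite minimum by local finiteness of $\Kk$). Either way, $\eta$ must be exhibited as a function of $\sfs^0$ alone before Lemma~\ref{lem:covering} is invoked, and the estimate verified afterward using property~(ii); as written, your proof does neither.
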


To prove this lemma and Proposition \ref{wr} below we need the following basic topological result that we borrow from \cite[Lem.2.4]{abf}.

\begin{lem}\label{neighs}
Let $T$ be a paracompact topological space, let $\{T_k\}_{k\in\N}$ be a locally finite family of subsets of $T$ and for each $k\in\N$ let $V_k\subset T$ be an open neighborhood of $T_k$. Then there exist open neighborhoods $U_k\subset T$ of $T_k$ such that $U_k\subset V_k$ for each $k\in\N$ and the family $\{U_k\}_{k\in\N}$ is locally finite in~$T$.
\end{lem}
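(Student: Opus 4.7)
The plan is to reduce this to a direct construction using a paracompact refinement. Because $\{T_k\}_{k\in\N}$ is locally finite, every $t\in T$ admits an open neighborhood $N_t$ that meets only finitely many of the sets $T_k$. Invoking paracompactness of $T$, I would refine the open cover $\{N_t\}_{t\in T}$ to a locally finite open cover $\{Z_\beta\}_{\beta\in B}$ of $T$. By construction each $Z_\beta$ is contained in some $N_t$, hence the index set
\[
F_\beta:=\{k\in\N:\ Z_\beta\cap T_k\neq\varnothing\}
\]
is finite for every $\beta\in B$.

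Next, for each $k\in\N$ define
\[
U_k\,:=\,V_k\cap\bigcup\{Z_\beta:\ \beta\in B,\ Z_\beta\cap T_k\neq\varnothing\}.
\]
The set $U_k$ is open because it is the intersection of the open set $V_k$ with a union of open sets. It contains $T_k$: given $t\in T_k$, there exists $\beta\in B$ with $t\in Z_\beta$, and this $Z_\beta$ meets $T_k$, so $t$ lies in the union above; and $t\in T_k\subset V_k$ by hypothesis. Finally $U_k\subset V_k$ by definition.

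It remains to verify that $\{U_k\}_{k\in\N}$ is locally finite in $T$. Pick $t\in T$ and, using local finiteness of $\{Z_\beta\}_{\beta\in B}$, choose an open neighborhood $W$ of $t$ that meets only the $Z_\beta$ indexed by a finite set $B_0\subset B$. If $U_k\cap W\neq\varnothing$ for some $k\in\N$, then by the very definition of $U_k$ there exists $\beta\in B$ with $Z_\beta\cap T_k\neq\varnothing$ and $Z_\beta\cap W\supset(U_k\cap W)\cap Z_\beta\neq\varnothing$ for some contributing index; more precisely $W$ must meet one of the $Z_\beta$ appearing in the union defining $U_k$, so $\beta\in B_0$ and $k\in F_\beta$. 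Consequently $W$ can only meet those $U_k$ with $k\in\bigcup_{\beta\in B_0}F_\beta$, and this latter set is a finite union of finite sets, hence finite. This proves local finiteness and completes the argument; the only mildly delicate point is the bookkeeping in this last local-finiteness verification, which is why the finite sets $F_\beta$ are introduced explicitly.
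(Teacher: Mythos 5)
Your argument is correct and is essentially the paper's own proof: both take a locally finite open refinement of the cover by neighborhoods meeting only finitely many $T_k$, define $U_k$ as the union of the refinement members meeting $T_k$ (intersected with $V_k$), and verify local finiteness by the same finite-index bookkeeping. The only difference is cosmetic (you intersect with $V_k$ at the outset rather than at the end).
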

\begin{proof}
For each $x\in T$ let $B_x\subset T$ be an open neighborhood of $x$ that meets only finitely many~$T_k$. The family $\{B_x\}_{x\in T}$ is an open covering of $T$. As $T$ is paracompact, there exists a locally finite open covering $\{W_\ell\}_{\ell\in L}$ of $T$, which is a refinement of $\{B_x\}_{x\in T}$. Observe that each $W_\ell$ meets only finitely many~$T_k$. For each $k\in\N$ define $U_k':=\bigcup_{W_\ell\cap T_k\neq\varnothing}W_\ell$. Note that $T_k\subset U_k'$ for each $k\in\N$. We claim: \em The family $\{U_k'\}_{k\in\N}$ is locally finite in $T$\em. 

Fix a point $x\in T$ and consider a neighborhood $V_x\subset T$ of $x$ that meets finitely many $W_\ell$, say $W_{\ell_1},\ldots,W_{\ell_r}$. The union $\bigcup_{j=1}^rW_{\ell_j}$ meets only finitely many $T_k$, say $T_{k_1},\ldots,T_{k_s}$. If $k\not\in\{k_1,\ldots,k_s\}$, the intersection $U_k'\cap V_x=\varnothing$. To finish it is enough to define $U_k:=U_k'\cap V_k$ for each $k\in\N$.
\end{proof}

We are ready to prove Lemma \ref{lem:approximation'}.

\begin{proof}[Proof of Lemma \em\ref{lem:approximation'}]
We will give the proof only in the case $X$ is non-compact (because if $X$ is compact, the proof is similar, but easier). Choose a sequence $\{X_n\}_{n\in\N}$ of compact subsets of $X$ such that for each $n\in\N$:
\begin{itemize}
 \item $X_n$ is a finite union of elements of $\Kk$, say $X_n=\bigcup_{\sft\in\Kk_n}\sft$ for some finite set $\Kk_n\subset\Kk$.
 \item $X_{n-1}\subsetneq\Int_X(X_n)$, where $X_{-1}:=\varnothing$.
 \item $\bigcup_{n\in\N}X_n=X$.
\end{itemize}

Let $\{\delta_n\}_{n\in\N}$ be the decreasing sequence of positive real numbers defined by
\[
\text{$\delta_n:=\min_{x\in X_n}\{\delta(x)\}>0\,$ for each $n\in\N$.}
\]
Write $\veps_{-1}:=1$. As the restriction of $g|_{X_n}$ is uniformly continuous, for each $n\in\N$ there exists $\veps_n\in(0,\veps_{n-1})$ such that 
\begin{equation}\label{eq:uc}
\text{$\|g(x')-g(x)\|_q<\delta_n\,$ for each pair $x',x\in X_n$ with $\|x'-x\|_m<\veps_n$}.
\end{equation}

Fix $\sfs^0\in\Kk^0$. We claim: \em there exists a unique integer $n:=n(\sfs^0)\in\N$ such that $\sfs^0\subset X_n\setminus X_{n-1}$\em.

Pick $x\in\sfs^0$ and let $k\in\N$ be such that $x\in X_k=\bigcup_{\sft\in\Kk_k}\sft$. Thus, $x\in\sft$ for some $\sft\in\Kk_k$, so $x\in\sfs^0\cap\sft$ and $\sfs^0\subset\sft\subset X_k$. This proves that if $\sfs^0\cap X_k\neq\varnothing$, then $\sfs^0\subset X_k$. Note that $n=n(\sfs^0):=\min\{k\in\N:\ \sfs^0\subset X_k\}$ is the unique natural number such that $\sfs^0\subset X_n\setminus X_{n-1}$, as claimed.

Consider the strictly positive function $\eta:\Kk^0\to\R^+,\ \sfs^0\mapsto\veps_{n(\sfs^0)+1}$. By Lemma \ref{lem:covering} for each $\sfs^0\in\Kk^0$ there exist an open subset $U_{\sfs^0}$ of $X$ with $V_{\sfs^0}:=U_{\sfs^0}\cap {\sfs^0}\neq\varnothing$ and a $\mscr{C}^r$ retraction $r_{\sfs^0}:U_{\sfs^0}\to V_{\sfs^0}$ such that: 
\begin{align}
&\text{$\{U_{\sfs^0}\}_{\sfs^0\in\Kk^0}$ is a locally finite covering of $X$,}\nonumber\\
&\text{$\cl_{\R^m}(U_{\sfs^0})\cap\sft=\varnothing\,$ for each pair $(\sfs^0,\sft)\in\Kk^0\times\Kk$ such that ${\sfs^0}\cap\sft=\varnothing$ and}\label{eq:cl'}\\
&\text{$\textstyle\sup_{x\in U_{\sfs^0}}\{\|x-r_{\sfs^0}(x)\|_m\}<\veps_{n(\sfs^0)+1}\,$ for each $\sfs^0\in\Kk^0$.}\label{eq:max'}
\end{align}

Let $\{\theta_{\sfs^0}:X\to[0,1]\}_{\sfs^0\in\Kk^0}$ be a $\mscr{C}^r$ partition of unity subordinated to the locally finite open covering $\{U_{\sfs^0}\}_{\sfs^0\in\Kk^0}$ of $X$. To prove the existence of such $\mscr{C}^r$ partition of unity one can proceed as follows. We may assume that $X$ is a closed subset of $\R^m$. The family $\{\cl_{\R^m}(U_{\sfs^0})\}_{\sfs^0\in\Kk^0}$ is locally finite in $\R^m$. By Lemma \ref{neighs} there exists a locally finite family $\{\Omega_{\sfs^0}\}_{\sfs^0\in\Kk^0}$ of open subsets of $\R^m$ such that $\cl_{\R^m}(U_{\sfs^0})\subset\Omega_{\sfs^0}$. Let $\Omega_{\sfs^0}'\subset\Omega_{\sfs^0}$ be an open subset such that $U_{\sfs^0}=X\cap\Omega_{\sfs^0}'$. Let $\{\Theta_0\}\cup\{\Theta_{\sfs^0}:X\to[0,1]\}_{\sfs^0\in\Kk^0}$ be a $\mscr{C}^r$ partition of unity subordinated to the locally finite open covering $\{\R^m\setminus X\}\cup\{\Omega_{\sfs^0}'\}_{\sfs^0\in\Kk^0}$ of $\R^m$. Now, it is enough to consider $\theta_{\sfs^0}:=\Theta_{\sfs^0}|_X$ for each $\sfs^0\in\Kk^0$ in order to have the desired $\mscr{C}^r$ partition of unity subordinated to $\{U_{\sfs^0}\}_{\sfs^0\in\Kk^0}$.

For each $\sfs^0\in\Kk^0$ the map
\[
g\circ r_{\sfs^0}:U_{\sfs^0}\to V_{\sfs^0}\subset\sfs^0\subset\sfs\to\xi_\sfs,\ x\mapsto r_{\sfs^0}(x)\mapsto g(r_{\sfs^0}(x))
\]
is a $\mscr{C}^r$ map, so also the map $H_{\sfs^0}:X\to\R^q$ defined by 
\begin{equation*}
H_{\sfs^0}(x):=\begin{cases}
\theta_{\sfs^0}(x)\cdot g(r_{\sfs^0}(x))&\text{ if $x\in U_{\sfs^0}$,}\\
0&\text{ if $x\in X\setminus U_{\sfs^0}$,}
\end{cases}
\end{equation*}
belongs to $\mscr{C}^r(X,\R^q)$. Consider the $\mscr{C}^r$ map $H:=\sum_{\sfs^0\in\Kk^0}H_{\sfs^0}:X\to\R^q$.

Fix $\sft\in\Kk$ and define $W_\sft:=X\setminus\bigcup_{\sfs^0\in\Kk^0,\,{\sfs^0}\cap\sft=\varnothing}\cl_{\R^m}(U_{\sfs^0})$. As the family $\Kk^0$ is locally finite in $X$, we deduce that $W_\sft$ is by \eqref{eq:cl'} an open neighborhood of $\sft$ in $X$. We claim: $H(W_\sft)\subset\xi_\sft$. 

Pick $x\in W_\sft$. If $\sfs^0\in\Kk^0$ and ${\sfs^0}\cap\sft=\varnothing$, then $\theta_{\sfs^0}(x)=0$ because the support of $\theta_{\sfs^0}$ is contained in $U_{\sfs^0}$ and $x\not\in\cl_{\R^m}(U_{\sfs^0})$. If ${\sfs^0}\cap\sft\neq\varnothing$, then ${\sfs^0}\subset\sft$, so we conclude
\begin{align}
&\sum_{\substack{\sfs^0\in\Kk^0,\,{\sfs^0}\subset\sft,\\x\in U_{\sfs^0}}}\theta_{\sfs^0}(x)=1\ \text{and}\label{eq:sum'}\\
H(x)=&\sum_{\substack{\sfs^0\in\Kk^0,\,{\sfs^0}\subset\sft,\\x\in U_{\sfs^0}}}\theta_{\sfs^0}(x)g(r_{\sfs^0}(x)).\label{eq:sum2'}
\end{align}

If $\sfs^0\in\Kk^0$ satisfies ${\sfs^0}\subset\sft$ and $x\in U_{\sfs^0}$, then $r_{\sfs^0}(x)\in V_{\sfs^0}\subset\sfs^0$, so $g(r_{\sfs^0}(x))\in\xi_\sft$. As $\xi_\sft$ is a convex subset of $\R^q$ and each $g(r_{\sfs^0}(x))\in\xi_\sft$ if ${\sfs^0}\subset\sft$ and $x\in U_{\sfs^0}$, we conclude by means of \eqref{eq:sum'} and \eqref{eq:sum2'} that $H(x)\in\xi_\sft$. Consequently, $H(W_\sft)\subset\xi_\sft$, as claimed.

As $X=\bigcup_{\sft\in\Kk}\sft=\bigcup_{\sft\in\Kk}W_\sft$, we deduce $H(X)$ is contained in $|L|$ and $h:X\to|L|,\,x\mapsto H(x)$ is a $\mscr{C}^r$ map that satisfies property (i).

It remains to prove (ii). Fix $x\in X_n\setminus X_{n-1}$ for some $n\in\N$. Denote $\sfu$ the unique element of $\Kk$ such that $x\in\sfu^0$. As $\sfu^0\cap X_n\neq\varnothing$, we have $\sfu^0\subset X_n$. Observe that $\sfu^0\cap X_{n-1}=\varnothing$, because otherwise $x\in\sfu^0\subset X_{n-1}$, which is a contradiction. Thus, $\sfu^0\subset X_n\setminus X_{n-1}$, so $n(\sfu^0)=n$. If $\sfs^0\in\Kk^0$ satisfies $x\in U_{\sfs^0}$, then $U_{\sfs^0}\cap\sfu\neq\varnothing$ and by \eqref{eq:cl'} we have 
\[
\sfs^0\subset\sfu\subset\cl_X(X_n\setminus X_{n-1})\subset X_n\setminus\Int_X(X_{n-1})\subset X_n\setminus X_{n-2}=(X_n\setminus X_{n-1})\sqcup(X_{n-1}\setminus X_{n-2}),
\]
where $X_{-2}:=\varnothing$. Thus, $n(\sfs^0)\in\{n-1,n\}$. Consequently, $r_{\sfs^0}(x)\in\sfs^0\subset\sfu\subset X_n$ and by \eqref{eq:max'} we have $\|x-r_{\sfs^0}(x)\|_m<\veps_{n(\sfs^0)+1}\leq\veps_n$. Now inequality \eqref{eq:uc} implies that
\begin{multline*}
\|h(x)-g(x)\|_q=\Big\|\sum_{\sfs^0\in\Kk^0,\,x\in U_{\sfs^0}}\theta_{\sfs^0}(x)\big(g(r_{\sfs^0}(x))-g(x)\big)\Big\|_q\leq\\
\leq\sum_{\sfs^0\in\Kk^0,\,x\in U_{\sfs^0}}\theta_{\sfs^0}(x)\|g(r_{\sfs^0}(x))-g(x)\|_q<\delta_n\leq\delta(x),
\end{multline*}
as required.
\end{proof}

\begin{remark}\label{rem:constant1}
We keep the notations of the preceding proof. If there exist $\sft\in\Kk$ and $w\in|L|$ such that $g$ takes the constant value $w$ on $\sft$, then by \eqref{eq:sum'} and \eqref{eq:sum2'} the $\mscr{C}^r$ map $h:X\to|L|$ is constant on the open neighborhood $W_\sft\subset X$ of $\sft$ and takes the constant value $w$. In particular, this is always true if $\sft=\{v\}$ for any vertex $v$ of $K$. As a consequence, if $X$ has at least one accumulation point, then $h$ is not injective. $\sqbullet$
\end{remark}

%%%
\subsection{Proof of Theorem \ref{thm:main1}}\label{thm11}
Let $X$ be a locally compact subset of some $\R^m$. We assume $X$ is non-compact. If $X$ is compact the proof is similar, but easier. As $Y\subset\R^n$ is a weakly $\mscr{C}^r$ triangulable set, there exist a locally finite simplicial complex $L$ of some $\R^q$ and a homeomorphism $\Psi:|L|\to Y$ such that $\Psi|_\xi\in\mscr{C}^r(\xi,Y)$ for each $\xi\in L$. In particular, $Y$ is locally compact in $\R^n$. Consider a continuous map $f:X\to Y$ and a strictly positive continuous function $\veps:X\to\R^+$. We will show: \em There exists $\Hh\in\mscr{C}^r(X,Y)$ such that $\|\Hh(x)-f(x)\|_n<\veps(x)$ for each $x\in X$\em.

By the first part of Corollary \ref{cor:closedness}, we can assume $X$ is closed in $\R^m$ and $Y$ is closed in $\R^n$.

The proof is conducted in several steps:

\noindent{\sc Step I.} \em Initial preparation\em. As $X$ is closed in $\R^m$, Tietze's extension theorem guarantees the existence of a strictly positive continuous function $E:\R^m\to\R^+$ and a continuous map $\widehat{f}:\R^m\to\R^n$ such that $E(x)=\veps(x)$ and $\widehat{f}(x)=f(x)$ for each $x\in X$. By \cite[Cor.3.5]{hanner} $Y$ is an absolute neighborhood retract. Consequently, as $Y$ is closed in $\R^n$, there exists an open neighborhood $W\subset\R^n$ of $Y$ and a continuous retraction $\rho:W\to Y$. Consider the open neighborhood $U:=(\widehat{f}\,)^{-1}(W)$ of $X$ in $\R^m$ and the continuous extension $\widetilde{f}:U\to Y,\ x\mapsto\rho(\widehat{f}(x))$ of $f$. Renaming $U$ as $X$, $E|_U$ as $\veps$ and $\widetilde{f}$ as $f$, we can assume that $X$ is an open subset of~$\R^m$, so in particular $X$ is a $\mscr{C}^\infty$ manifold. 

By the Cairns-Whitehead triangulation theorem $X$ is `$\mscr{C}^\infty$ triangulable on open simplices', that is, there exist a locally finite simplicial complex $K$ of some $\R^p$ and a homeomorphism $\Phi:|K|\to X$ such that $\Phi(\sigma^0)$ is a $\mscr{C}^\infty$ submanifold of $\R^m$ and the restriction $\Phi|_{\sigma^0}:\sigma^0\to\Phi(\sigma^0)$ is a $\mscr{C}^{\infty}$ diffeomorphism for each open simplex $\sigma^0$ of $K$ (see \cite[Lemma 3.5 \& p.82]{hu}). Set $P:=|K|$, $\Kk:=\{\Phi(\sigma)\}_{\sigma\in K}$ and $\Kk^0:=\{\Phi(\sigma^0)\}_{\sigma\in K}$. Define $Q:=|L|\subset\R^q$.

\noindent{\sc Step II.} \em Reduction to the weakly simplicial case\em. Choose a sequence $\{X_k\}_{k\in\N}$ of compact subsets of $X$ such that $\bigcup_{k\in\N}X_k=X$ and $X_{k-1}\subsetneq\Int_X(X_k)$ for each $k\in\N$, where $X_{-1}:=\varnothing$. Note that the family $\{X_k\setminus X_{k-1}\}_{k\in\N}$ is locally finite in $X$.

Fix $k\in\N$ and consider the compact subsets $P_k:=\Phi^{-1}(X_k)$ of $P$ and $Q_k:=\Psi^{-1}(f(X_k))$ of $Q$. Define $\epsilon_k:=\min_{x\in X_k}\{\veps(x)\}>0$ and $\mu_k:=\min\{1,\mr{dist}_{\R^q}(Q_k,\cl_{\R^q}(Q)\setminus Q)\}>0$, where $\mr{dist}_{\R^q}(Q_k,\varnothing):=+\infty$. Consider the compact subset $V_k$ of $Q$ defined by
\begin{equation}\label{eq:Vk}
V_k:=\{z\in Q:\ \mr{dist}_{\R^q}(z,Q_k)\leq\mu_k/2\}.
\end{equation}
By the uniform continuity of $\Psi$ on $V_k$ there exists (for each $k\in\N$) $\delta_k>0$ such that
\begin{equation}\label{eq:uc-Psi}
\|\Psi(z')-\Psi(z)\|_n<\epsilon_k\;\text{ for each pair $z',z\in V_k$ with $\|z'-z\|_q<\delta_k$}.
\end{equation}

Consider the $\mscr{C}^0$ map $F:=\Psi^{-1}\circ f\circ\Phi:|K|=P\to Q=|L|$. Applying Theorem \ref{thm:zeeman} to $F$ we obtain, after replacing $K$ by one of its subdivisions, that there exists a weakly simplicial map $F^*:|K|\to|L|$ such that
\begin{equation}\label{eq:F*}
\text{$\|F^*(w)-F(w)\|_q<\min\{\mu_k/4,\delta_k/2\}\,$ for each $k\in\N$ and each $w\in P_k\setminus P_{k-1}$,}
\end{equation}
where $P_{-1}:=\varnothing$. Define the continuous maps $g:=\Psi^{-1}\circ f=F\circ\Phi^{-1}:X\to|L|$ and $g^*:=F^*\circ\Phi^{-1}:X\to|L|$. For each $\sft\in\Kk$ the restriction $\Phi^{-1}|_{\sft^0}:\sft^0\to\Phi^{-1}(\sft^0)$ is a $\mscr{C}^\infty$ diffeomorphism. Thus, as $F^*|_{\Phi^{-1}(\sft^0)}$ is an affine map, $g^*|_{\sft^0}\in\mscr{C}^\infty(\sft^0,|L|)$. As $F^*$ is weakly simplicial and $\Phi^{-1}(\sft)\in K$, there exists $\xi_\sft\in L$ such that $g^*(\sft)=F^*(\Phi^{-1}(\sft))\subset\xi_\sft$. By \eqref{eq:F*} we have:
\begin{equation}\label{eq:F**}
\text{$\|g^*(x)-g(x)\|_q<\min\{\mu_k/4,\delta_k/2\}\,$ for each $k\in\N$ and each $x\in X_k\setminus X_{k-1}$.}
\end{equation}
The following commutative diagram summarizes the current situation.
\[
\xymatrix{
X\ar[rr]^f\ar@<0.5ex>[rrdd]^g\ar@<-0.5ex>[rrdd]_{g^*}&&Y\\
\\
|K|\ar[uu]^\Phi_\cong\ar@<0.5ex>[rr]^F\ar@<-0.5ex>[rr]_{F^*}&&|L|\ar[uu]_{\Psi}^{\cong}&\ar@{_{(}->}[l]\ar@{_{(}->}[uul]_{\Psi|_\xi}\xi
}
\]

\noindent{\sc Step III.} \em Construction of the approximating map\em. 
By Lemma \ref{lem:approximation'} there exist $h^*\in\mscr{C}^\infty(X,|L|)$ and for each $\sft\in\Kk$ an open neighborhood $W_\sft\subset X$ of $\sft$ satisfying:
\begin{align}
&\text{$h^*(W_\sft)\subset\xi_\sft\,$ for each $\sft\in\Kk$}\label{eq:g*-h*-2},\\
&\text{$\|h^*(x)-g^*(x)\|_q<\min\{\mu_k/4,\delta_k/2\}\,$ for each $k\in\N$ and each $x\in X_k\setminus X_{k-1}$.}\label{eq:g*-h*-1}
\end{align}
We define $\Hh:=\Psi\circ h^*:X\to Y$ and claim: $\Hh\in\mscr{C}^r(X,Y)$. 

Recall that $\{W_\sft\}_{\sft\in\Kk}$ is an open covering of $X$. By \eqref{eq:g*-h*-2} the restriction $h^*|_{W_\sft}:W_\sft\to\xi_\sft$ is a well-defined $\mscr{C}^\infty$ map for each $\sft\in\Kk$. In addition, $\Hh|_{W_\sft}=\Psi|_{\xi_\sft}\circ h^*|_{W_\sft}$. As both $\Psi|_{\xi_\sft}$ and $h^*|_{W_\sft}$ are $\mscr{C}^r$ maps, $\Hh|_{W_\sft}$ is also a $\mscr{C}^r$ map. Consequently, $\Hh\in\mscr{C}^r(X,Y)$, as claimed.

Next, by \eqref{eq:F**} and \eqref{eq:g*-h*-1} we have
\begin{equation}\label{final}
\text{$\|h^*(x)-g(x)\|_q<\min\{\mu_k/2,\delta_k\}\,$ for each $k\in\N$ and each $x\in X_k\setminus X_{k-1}$.}
\end{equation}
Recall that $g(X_k)=\Psi^{-1}(f(X_k))=Q_k$, so by \eqref{eq:Vk} and \eqref{final} we have $h^*(x)\in V_k$ for each $x\in X_k\setminus X_{k-1}$. Thus, by \eqref{eq:uc-Psi} and \eqref{final} we conclude
\[
\|\Hh(x)-f(x)\|_n=\|\Psi(h^*(x))-\Psi(g(x))\|_n<\epsilon_k\leq\veps(x),
\]
for each $x\in X_k\setminus X_{k-1}$ and each $k\in\N$. Thus, $\|\Hh(x)-f(x)\|_n<\veps(x)$ for each $x\in X$, as required.\qed

%%%
\subsection{Proof of Theorems \ref{thm:relative-locally-constant}}\label{subsec:rlc}
As the pair $(X,X')$ is weakly$^*$ $\mscr{C}^r$ triangulable, there exists a locally finite simplicial complex $K$, a subcomplex $K'$ of $K$ and a homeomorphism $\Phi:|K|\to X$ such that $\Phi(|K'|)=X'$. We repeat the preceding proof of Theorem \ref{thm:main1} with the following changes:
\begin{itemize}
\item We refine the triangulation $\Psi:|L|\to Y$ in such a way that each connected component of $f(X')$ is a vertex $w_k$ of $L$. Denote $X'_k:=f^{-1}(\{w_k\})$ and observe that $X_k'$ is a union of some connected components of $X'$. Let $K'_k$ be the subcomplex of $K'$ such that $\Phi(|K'_k|)=X_k'$.
\item Replace {\sc Step I} with the last two sentences of such step. Namely: `Set $P:=|K|$, $\Kk:=\{\Phi(\sigma)\}_{\sigma\in K}$ and $\Kk^0:=\{\Phi(\sigma^0)\}_{\sigma\in K}$. Define $Q:=|L|\subset\R^q$.'.
\item In {\sc Step II} we apply Remark \ref{rem:rel}(i) to $F:=\Psi^{-1}\circ f\circ\Phi$ with $H:=K'$. The reader should have in mind that $F|_{|K'_k|}$ is constantly equal to the vertex $w_k$ of $L$, so $F$ is simplicial on $K'_k$. Thus, we obtain $g^*:X\to|L|$ such that $g^*(x)=g(x)=w_k$ for each $x\in X'_k$.
\end{itemize}
Finally, by Remark \ref{rem:constant1} the $\mscr{C}^r$ map $h^*:X\to|L|$ that approximates $g^*$ is constantly equal to $w_k$ on $X'_k$. Consequently, the $\mscr{C}^r$ map $\Hh:X\to Y$ that approximates $f$ satisfies $\Hh(x)=\Psi(w_k)=f(x)$ for each $x\in X'_k$, so $\Hh|_{X'}=f|_{X'}$, as required.
\qed

%%%
\subsection{Proof of Theorem \ref{thm:relative-discrete}}\label{subsec:rd} 
Proceeding as in the \textsc{Step I} of the proof of Theorem \ref{thm:main1} we can assume that $X$ is an open subset of $\R^m$. Thus, there exist a locally finite simplicial complex $K$ of some $\R^p$ and a homeomorphism $\Phi:|K|\to X$ such that the restriction $\Phi|_{\sigma^0}:\sigma^0\to X$ is a $\mscr{C}^\infty$ map for each open simplex $\sigma^0$ of $K$. Next, we refine the triangulation $\Phi$ in such a way that there exists a subcomplex $K'$ of $K$ satisfying $\Phi(|K'|)=X'$. This proves that $(X,X')$ is a weakly$^*$ $\mscr{C}^\infty$ triangulable pair. Now, we apply Theorem \ref{thm:relative-locally-constant} to complete the proof.
\qed

%%%
\subsection{Proof of Corollary \ref{cor:KP}}\label{co14}
Let $K$ and $P\subset\R^p$ satisfy the conditions in the statement and let $\veps:P\to\R^+$ be a strictly positive continuous function. Apply Lemma \ref{lem:approximation'} to $X:=P$, $\Phi:=\id_P$, $L:=K$, $g:=\id_P$ and $\delta:=\veps$. We obtain a map $\iota_\veps\in\mscr{C}^\infty(P,P)$ and for each $\sigma\in K$ an open neighborhood $W_\sigma\subset P$ of $\sigma$ such that: 
\begin{itemize}
\item $\iota_\veps(W_\sigma)\subset\sigma$ for each $\sigma\in K$ and
\item $\|\iota_\veps(x)-x\|_p<\veps(x)$ for each $x\in X$.
\end{itemize}
Thus, the net $\{\iota_\veps\}_{\veps\in\mscr{C}^0(P,\R^+)}$ converges to the identity map in $\mscr{C}^0(P,P)$. Consequently, by Lemma \ref{lem:approx} if $f\in\mscr{C}^0(P,Y)$, the net $\{f\circ\iota_\veps\}_{\veps\in\mscr{C}^0(P,\R^+)}$ converges to $f$ in $\mscr{C}^0(P,Y)$. In addition, if $f|_\sigma\in\mscr{C}^r(\sigma,Y)$ for each $\sigma\in K$, every composition $f\circ\iota_\veps:P\to Y$ is a $\mscr{C}^r$ map, because so is the restriction $(f\circ\iota_\veps)|_{W_\sigma}=f|_\sigma\circ\iota_\veps|_{W_\sigma}$ for each $\sigma\in K$. \qed

\begin{remark}
If $K$ is compact, the family of strictly positive constant functions $\epsilon_n:=2^{-n}$ is cofinal in $\mscr{C}^0(P,\R^+)$ and it is enough to construct (using again Lemma \ref{lem:approximation'}) for each $n\in\N$ a map $\iota_n\in\mscr{C}^\infty(P,P)$ and for each $\sigma\in K$ an open neighborhood $W_\sigma\subset P$ of $\sigma$ such that: 
\begin{itemize}
\item $\iota_n(W_\sigma)\subset\sigma$ for each $\sigma\in K$ and
\item $\|\iota_n(x)-x\|_p<\veps_n(x):=\epsilon_n$ for each $x\in X$.
\end{itemize}
Once this is done one proceeds as above. $\sqbullet$
\end{remark}

%%%%%%%

\section{Proof of Theorem \ref{thm:main2}}\label{s4}

In this section we develop first all the machinery we need to prove Theorem \ref{thm:main2}, which is inspired by some techniques contained in \cite{br}: 
\begin{itemize}
\item the construction of $\mscr{C}^\infty$ weak retractions for an analytic normal-crossings divisor $X$ of a real analytic manifold $M$ (that appears in \S\ref{cwr}),
\item immersion of $C$-analytic sets as singular sets of coherent $C$-analytic sets homeomorphic to Euclidean spaces (that appears in \S\ref{icas}),
\end{itemize}
and after we approach its proof (see \S\ref{thm2}).

A weaker and purely semialgebraic version of the arguments used in this section is contained in our manuscript \cite{fg}.

%%%%%%%
\subsection{$\mscr{C}^\infty$ weak retractions}\label{cwr}

In this subsection we construct $\mscr{C}^\infty$ weak retractions $\rho:W\to X$ of open neighborhoods $W$ of an analytic normal-crossings divisor $X$ of a real analytic manifold~$M$ (Proposition \ref{wr}). $\mscr{C}^\infty$ weak retractions $\rho:W\to X$ are $\mscr{C}^\infty$ maps that are arbitrarily close to the identity $\id_X$ on $X$ in the strong $\mscr{C}^0$ topology. As we have already commented, if $X$ is not a $\mscr{C}^\infty$ manifold, we cannot expect that $\rho$ is a retraction onto $X$, that is, there is no hope to have $\rho|_X=\id_X$. Of course, $\rho|_X$ cannot be either a homeomorphism. As the set of proper maps $X\to X$ is open in $\mscr{C}^0(X,X)$, if $\rho:W\to X$ is a $\mscr{C}^\infty$ weak retraction, the restriction $\rho|_X$ is also proper, so fails to be bijective.

Let $M\subset\R^m$ be a $d$-dimensional real analytic manifold and let $X$ be a $C$-analytic subset of $M$. We say that $X$ is an \em analytic normal-crossings divisor of $M$ \em if: 
\begin{itemize}
\item for each point $x\in X$ there exists an open neighborhood $U\subset M$ of $x$ and a real analytic diffeomorphism $\varphi:U\to\R^d$ such that $\varphi(x)=0$ and $\varphi(X\cap U)=\{x_1\cdots x_r=0\}$ for some $r\in\{1,\ldots,d\}$ and 
\item the ($C$-analytic) irreducible components \cite{wb} of (the $C$-analytic set) $X$ are non-singular analytic hypersurfaces of $M$. 
\end{itemize}

In the next result we establish the existence of $\mscr{C}^\infty$ weak retractions $\rho:W\to X$.

\begin{prop}[$\mscr{C}^\infty$ weak retractions]\label{wr}
Let $X$ be an analytic normal-crossings divisor of a real analytic manifold $M$ and let ${\mathcal U}$ be an open neighborhood of $\id_X$ in $\mscr{C}^0(X,X)$. Then there exist an open neighborhood $W$ of $X$ in $M$ and a $\mscr{C}^\infty$ map $\rho:W\to X$ such that $\rho|_X\in{\mathcal U}$.
\end{prop}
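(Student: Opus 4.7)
The plan is to build $\rho$ in two steps: first a local model in normal-crossings charts, then a global construction via a Riemannian metric in which the irreducible components of $X$ meet orthogonally. Throughout, fix an even cutoff $\chi\in\mscr{C}^\infty(\R,[0,1])$ with $\chi(t)=1$ for $|t|\le 1$ and $\chi(t)=0$ for $|t|\ge 2$.

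For the local model, in a normal-crossings chart $\varphi:U\to\R^d$ with $\varphi(X\cap U)=\{y_1\cdots y_r=0\}$ and for $\eta>0$, I would consider
\[
\rho_\eta(y_1,\ldots,y_d) := \bigl(y_1(1-\chi(y_1/\eta)),\ldots,y_r(1-\chi(y_r/\eta)),y_{r+1},\ldots,y_d\bigr).
\]
A direct check shows $\rho_\eta$ is smooth, carries the open neighborhood $W_\eta:=\{y:|y_i|<\eta\text{ for some }i\le r\}$ of $\{y_1\cdots y_r=0\}$ into that set (because on $W_\eta$ at least one factor $(1-\chi(y_i/\eta))$ vanishes, so does the product $\prod_{i\le r}y_i(1-\chi(y_i/\eta))$), and satisfies $\|\rho_\eta(y)-y\|_d\le 2\eta\sqrt{r-1}$ on $W_\eta\cap\{y_1\cdots y_r=0\}$.

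For the global construction, the naive partition-of-unity patching of such local maps fails because convex combinations of points in $X$ need not lie in $X$. To overcome this, I would first equip $M$ with a real analytic Riemannian metric $g$ in which the irreducible components $\{X_i\}_i$ of $X$ meet mutually orthogonally along their intersections; such a metric is obtained by gluing the Euclidean metrics coming from the normal-crossings charts via a real analytic partition of unity. Let $\pi_i:T_i\to X_i$ be the $g$-orthogonal projection on a tubular neighborhood of $X_i$ in $M$. Given a strictly positive continuous $\eta:M\to\R^+$, for each $i$ define the \emph{partial shrinking toward $X_i$} by
\[
\sigma_i(x) := \gamma_{x,\pi_i(x)}\bigl(\chi(d_g(x,\pi_i(x))/\eta(x))\bigr) \quad\text{for $x\in T_i$,}
\]
where $\gamma_{x,\pi_i(x)}:[0,1]\to M$ is the $g$-geodesic from $x$ to $\pi_i(x)$, and $\sigma_i(x):=x$ otherwise. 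Thus $\sigma_i$ is smooth on $M$, equals the identity outside $T_i$, and equals $\pi_i$ wherever $d_g(\cdot,X_i)\le\eta$. Mutual orthogonality of the components forces the $\sigma_i$'s to commute where simultaneously defined, so the (locally finite) composition $\rho:=\sigma_{i_1}\circ\sigma_{i_2}\circ\cdots$ is well-defined, smooth and order-independent. In any $g$-orthonormal chart aligned with the local components of $X$, this composition reduces to the local $\rho_\eta$ above, so $\rho$ maps the open neighborhood $W:=\bigcup_i\{x:d_g(x,X_i)<\eta(x)\}$ of $X$ into $X$; shrinking $\eta$ (and using Remark~\ref{rem:topologies} to realize $\mathcal{U}$ as a pointwise bound) gives $\rho|_X\in\mathcal{U}$.

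The hard part will be constructing the Riemannian metric so that the components meet orthogonally at every intersection and verifying the commutativity of the $\sigma_i$'s: this compatibility is what makes the independent per-component shrinkings add up to a smooth map with image contained in the non-convex set $X$. Once this Riemannian structure is in place, the assertions $\rho(W)\subseteq X$ and $\rho|_X\in\mathcal{U}$ reduce to the explicit local calculation in orthonormal charts aligned with the divisor.
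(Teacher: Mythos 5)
Your overall plan — shrink toward each irreducible component by a cutoff map and take the (locally finite) composition — is exactly the paper's strategy, and your local model $\rho_\eta$ is essentially the paper's $h_j$ (note the paper uses $\|w\|_m^2/\eta_j^2$ inside the cutoff precisely to ensure smoothness at $w=0$, a point your $\chi(d_g(x,\pi_i(x))/\eta(x))$ needs to handle too, since $d_g$ is not smooth along $X_i$). The difference is in how you enforce the crucial compatibility that each per-component shrinking preserves the \emph{other} components, and that is where the argument has real gaps.

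First, building the metric by gluing the Euclidean metrics from normal-crossings charts via a partition of unity does not in general produce a metric in which the components meet orthogonally. Orthogonality of $X_i$ and $X_k$ at $p\in X_i\cap X_k$ is the condition $g^{ik}(p)=0$ on the \emph{inverse} metric, and that condition is \emph{not} closed under convex combination of the metrics $g_\alpha$ (for $d\ge 3$ the relevant cofactors mix entries nonlinearly); it is closed under convex combination of the cometrics $g_\alpha^{-1}$, so one would have to glue inverse metric tensors instead and then invert — a fixable but unstated point. Second, and more seriously, orthogonality at $X_i\cap X_k$ is only an infinitesimal condition: it says the $g$-normal to $X_k$ along $X_i\cap X_k$ is \emph{tangent} to $X_i$, whereas you need the $X_k$-normal geodesics issuing from $X_i\cap X_k$ to \emph{stay} in $X_i$, so that $\sigma_k(X_i)\subset X_i$. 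That requires $X_i$ to be totally geodesic in the relevant directions (vanishing of the second fundamental form), a first-order condition on the metric that no partition-of-unity gluing will preserve. The paper avoids this entirely: it does not build a global metric but constructs, for each $j$, a tubular neighborhood $\phi_j:\EE_j\hookrightarrow M$ that is \emph{compatible} with the other components (meaning $\phi_j(x)\in X_k\Leftrightarrow\phi_j(\pi_j(x))\in X_k$), citing the proof of Lemma~2.5 of \cite{br}; this compatibility is precisely the property you are trying to extract from orthogonality, and it is the key nontrivial input. Third, the claim that the $\sigma_i$ commute is not justified when the scale $\eta$ varies with the point (composing in two orders evaluates $\eta$ at different places), and it is a red herring anyway: the paper never needs or claims commutativity. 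Well-definedness of the infinite composition follows from local finiteness and uniform smallness estimates, and $\rho(W)\subset X$ follows directly from $\Psi_j(W_j^*)\subset X_j$ together with $\Psi_k(X_j)\subset X_j$ for every $k$ — no reduction to a ``local model in an orthonormal chart'' (which would in any case fail, since geodesics are not straight coordinate lines for a nonflat metric) is required.
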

\begin{proof}
Assume that $M$ is a real analytic submanifold of some $\R^m$. Choose a strictly positive continuous function $\veps:X\to\R^+$ such that
\begin{equation}\label{eq:1}
\{g\in\mscr{C}^0(X,X):\ \|g(x)-x\|_m<\veps(x) \;\, \forall x\in X\}\subset\mc{U}.
\end{equation}
As $X$ is closed in $M$, we can extend by Tietze's extension theorem $\veps$ to a positive continuous function on $M$ that we denote again $\veps$.

Let $\{X_j\}_{j\in J}$ be the family of the irreducible components of $X$ (see \cite{wb}). Such a family is locally finite in $M$, so $J$ is countable and we assume $J=\N$. If $J$ is finite, the proof is similar but easier.

For each $j\in\N$ denote $\pi_j:\EE_j\to X_j$ the normal bundle of $X_j$ in $M$, where $\EE_j\subset X_j\times\R^m\subset\R^m\times\R^m=\R^{2m}$. Proceeding as the authors do in the proof of \cite[Lem.2.5]{br} one shows that there exists a $\mscr{C}^\infty$ tubular neighborhood map $\phi_j:\EE_j\hookrightarrow M$ of $X_j$ compatible with the other $X_k$ in the following sense: \em for each $x\in\EE_j$ and each $k\in\N\setminus\{j\}$ the image $\phi_j(x)\in X_k$ if and only if $\phi_j(\pi_j(x))\in X_k$\em. Define $\Omega_j:=\phi_j(\EE_j)$ for each $j\in\N$. By Lemma \ref{neighs} we may assume that the family $\{\Omega_j\}_{j\in \N}$ is locally finite in $M$.

Fix $j\in\N$ and let $\eta_j:X_j\to\R^+$ be a strictly positive $\mscr{C}^\infty$ function. Choose a $\mscr{C}^\infty$ function $f:\R\to[0,1]$ such that $f(t)=0$ if $|t|\leq1/4$ and $f(t)=1$ if $|t|\geq1$. Consider the map
\[
h_j:\EE_j\to\EE_j,\ (x,w)\mapsto(x,f(\|w\|_m^2/\eta_j^2(x))w)
\]
and the composition $\psi_j:=\phi_j\circ h_j\circ\phi_j^{-1}:\Omega_j\to\Omega_j$, which is a $\mscr{C}^\infty$ map (to guarantee this fact it is essential to use $\|\cdot\|_m^2$ instead of merely $\|\cdot\|_m$) that extends by the identity to a $\mscr{C}^\infty$ map $\Psi_j:M\to M$. Denote 
\begin{align*}
W_j&:=\phi_j\big(\{(x,w)\in\EE_j:\, \|w\|_m<\eta_j(x)\}\big)\subset\Omega_j,\\
W_j^*&:=\phi_j\big(\{(x,w)\in\EE_j:\, \|w\|_m<\eta_j(x)/2\}\big)\subset W_j.
\end{align*}
We have:
\begin{itemize}
\item $\Psi_j(W_j^*)=X_j$.
\item $\Psi_j(y)=y$ for each $y\in M\setminus W_j$.
\item $\Psi_j(X_k)\subset X_k$ for each $k$.
\item $\Psi_j$ is arbitrarily close to the identity on $M$ if $\eta_j$ is small enough.
\end{itemize}
Only the last assertion requires a further comment. As $\Psi_j$ is the identity on $M\setminus W_j$ and $\phi_j$ is a real analytic embedding (in particular a proper map onto its image), by Lemma \ref{lem:approx} it is enough to show that $h_j$ can be chosen arbitrarily close to $\id_{\EE_j}$. Pick $(x,w)\in\EE_j$. We have:
\begin{align*}
&h_j(x,w)-(x,w)=(0,(f(\|w\|_m/\eta_j(x))-1)w),\\
&|f(\|w\|_m/\eta_j(x))-1|\begin{cases}
\leq1&\text{if $\|w\|_m<\eta_j(x)$,}\\ 
=0&\text{if $\|w\|_m\geq\eta_j(x)$.}
\end{cases}
\end{align*}
Consequently,
\[
\|h_j(x,w)-(x,w)\|_{2m}=|f(\|w\|_m/\eta_j(x))-1|\|w\|_m<\eta_j(x),
\]
so $h_j$ is arbitrarily close to $\id_{\EE_j}$ and $\Psi_j$ is arbitrarily close to $\id_M$, provided $\eta_j$ is small enough. In particular, the restriction $\Psi_j|_X:X\to X$ is arbitrarily close to $\id_X$.

We claim: \em If the functions $\eta_j$ are small enough, the countable composition $\rho:M\to M$, $\rho:=\cdots\circ\Psi_j\circ\cdots\circ\Psi_0$ is a well-defined $\mscr{C}^\infty$ map and the restriction $\rho|_X:X\to X$ belongs to $\mc{U}$\em. 

As $M$ is Hausdorff, second countable and locally compact (hence also paracompact) and the family $\{\Omega_j\}_{j\in\N}$ is locally finite in $M$, there exists an open covering $\{U_\ell\}_{\ell\in\N}$ of $M$ such that each closure $\cl_M(U_\ell)$ of $U_\ell$ in $M$ is compact and only meets finitely many $\Omega_j$ and the family $\{\cl_M(U_\ell)\}_{\ell\in\N}$ is locally finite in $M$. Let $\{V_\ell\}_{\ell\in\N}$ be a shrinking of $\{U_\ell\}_{\ell\in\N}$ that is an open covering of $M$ and satisfies $K_\ell:=\cl_M(V_\ell)\subset U_\ell$ for each $\ell\in\N$. Denote $s_\ell\in\N$ the cardinality of the set of all $j\in\N$ such that $\Omega_j\cap\cl_M(U_\ell)\neq\varnothing$ for each $\ell\in\N$. Note that $\dist_{\R^m}(K_\ell,M\setminus U_\ell)>0$ and pick $\veps_\ell\in\R$ with $0<\veps_\ell<\dist_{\R^m}(K_\ell,M\setminus U_\ell)$. Bearing in mind Remark \ref{rem:topologies}, for each $j\in\N$ we choose $\eta_j$ small enough to have 
\[
\|\Psi_j(x)-x\|_m<\frac{\veps_\ell}{s_\ell+1}\ \text{ for each $\ell\in\N$ and each $x\in\cl_M(U_\ell)$.}
\]

Fix $\ell\in\N$ with $s_\ell>0$. Write $\{j\in\N:\, \Omega_j\cap V_\ell\neq\varnothing\}=\{j_1,\ldots,j_{s_\ell}\}$ and assume $j_1<\ldots<j_{s_\ell}$. Let us check: \em $\|(\Psi_{j_k}\circ\cdots\circ\Psi_{j_1})(y)-y\|_m<\frac{k\veps_\ell}{s_\ell+1}$ for each $y\in K_\ell$ and each $k\in\{1,\ldots,s_\ell\}$. In particular, 
\begin{equation}\label{clue-1}
(\Psi_{j_k}\circ\cdots\circ\Psi_{j_1})(y)\in U_\ell
\end{equation} 
for each $k\in\{1,\ldots,s_\ell\}$\em.

We proceed by induction on $k$. If $k=1$ the result is true by construction. Assume the result true for $k-1$ and let us check that it is also true for $k$. Pick a point $y\in K_\ell$. As $\|(\Psi_{j_{k-1}}\circ\cdots\circ\Psi_{j_1})(y)-y\|_m<\frac{(k-1)\veps_\ell}{s_\ell+1}<\veps_\ell$, we have $(\Psi_{j_{k-1}}\circ\cdots\circ\Psi_{j_1})(y)\in U_\ell$, so 
\[
\|\Psi_{j_k}((\Psi_{j_{k-1}}\circ\cdots\circ\Psi_{j_1})(y))-(\Psi_{j_{k-1}}\circ\cdots\circ\Psi_{j_1})(y)\|_m<\frac{\veps_\ell}{s_\ell+1}.
\]
Thus, we deduce
\begin{multline*}\label{veps}
\|(\Psi_{j_k}\circ\cdots\circ\Psi_{j_1})(y)-y\|_m\leq\|\Psi_{j_k}((\Psi_{j_{k-1}}\circ\cdots\circ\Psi_{j_1})(y))-(\Psi_{j_{k-1}}\circ\cdots\circ\Psi_{j_1})(y)\|_m\\
+\|(\Psi_{j_{k-1}}\circ\cdots\circ\Psi_{j_1})(y)-y\|_m<\frac{\veps_\ell}{s_\ell+1}+\frac{(k-1)\veps_\ell}{s_\ell+1}=\frac{k\veps_\ell}{s_\ell+1}.
\end{multline*}

By \eqref{clue-1} and the fact that $\Psi_j|_{M\setminus\Omega_j}=\id_{M\setminus\Omega_j}$, we have
\[
(\Psi_{j_s}\circ\cdots\circ\Psi_0)(y)=(\Psi_{j_s}\circ\cdots\circ\Psi_{j_1})(y)\in U_\ell\, \text{ for each $y\in K_\ell$}.
\]
As $U_\ell\subset\bigcap_{j>j_s}(M\setminus\Omega_j)$, we conclude
\[
\rho(y)=(\Psi_{j_s}\circ\cdots\circ\Psi_0)(y)=(\Psi_{j_s}\circ\cdots\circ\Psi_{j_1})(y).
\]
It follows that
\begin{equation*}
\text{$\|\rho(y)-y\|_m<\veps_\ell\,$ \text{ for each $\ell\in\N$ and each $y\in K_\ell=\cl_M(V_\ell)$.}}
\end{equation*}
In particular, as $\{V_\ell\}_{\ell\in\N}$ is an open covering of $M$, the composition $\rho$ turns out to be a well-defined $\mscr{C}^\infty$ map, which is arbitrarily close to $\id_M$ in $\mscr{C}^0(M,M)$ if the values $\veps_\ell$ are chosen small enough. We may assume in addition
\begin{equation}\label{eq:2}
\|\rho(y)-y\|_m<\veps(y)\, \text{ for each $y\in M$.}
\end{equation}

We claim: \em $\rho$ maps the open neighborhood
\[
W:=\bigcup_{j\in\N}(\Psi_{j-1}\circ\cdots\circ\Psi_0)^{-1}(W_j^*)\subset M
\]
of $X$ onto $X$\em, where $\Psi_{j-1}\circ\cdots\circ\Psi_0$ denotes $\id_M$ if $j=0$. 

Indeed, pick $y\in W$ and let $j\in\N$ be such that $y\in(\Psi_{j-1}\circ\cdots\circ\Psi_0)^{-1}(W_j^*)$. Define $z\in W_j^*$ by $z:=(\Psi_{j-1}\circ\cdots\circ\Psi_0)(y)$. We have
\[
\rho(y)=\big((\cdots\circ\Psi_{j+1})\circ\Psi_j \circ (\Psi_{j-1}\circ\cdots\circ\Psi_0)\big)(y)=(\cdots\circ\Psi_{j+1})(\Psi_j(z)).
\]
As $\Psi_j(z)\in\Psi_j(W_j^*)=X_j$ and $\Psi_k(X_j)\subset X_j$ for each $k$, we have $\rho(y)=(\cdots\circ\Psi_{j+1})(\Psi_j(z))\in X_j\subset X$, so we conclude $\rho(W)\subset X$. Thus, the corresponding restriction $\rho:W\to X$ is a well-defined $\mscr{C}^\infty$ map. By \eqref{eq:1} and \eqref{eq:2} the restriction $\rho|_X$ belongs to $\mc{U}$, as required.
\end{proof}

\subsection{Immersions of $C$-analytic sets as singular sets}\label{icas}
The following result is a $C$-analytic version of Lemma 2.2 in \cite{br}, which is crucial for the proof of Theorem \ref{thm:main2}. Recall that a $C$-analytic set $Y\subset\R^n$ is \em coherent \em if the ideal $\J_y$ (that is, the stalk of the sheaf of ideals $\J:=\J(Y)\an_{\R^n}$ at $y$) coincides with the ideal of germs of real analytic functions on $\R^n$ whose zero sets contain the germ $Y_y$ for each $y\in Y$.

\begin{lem}[$C$-analytic sets as singular sets]\label{y}
Let $Y$ be a $C$-analytic subset of $\R^n$. Denote $(x,y_1,y_2)$ the coordinates of $\R^n\times\R\times\R=\R^{n+2}$. Then there exists an irreducible coherent $C$-analytic subset $Z$ of $\R^{n+2}$ such that $\Sing(Z)=Y\times\{(0,0)\}$ and the restriction to $Z$ of the projection $\pi:\R^{n+2}\to\R^{n+1}$, $(x,y_1,y_2)\mapsto(x,y_1)$ is a homeomorphism. In addition, the restriction $\pi|_{Z\setminus(Y\times\{(0,0)\})}:Z\setminus(Y\times\{(0,0)\})\to\R^{n+1}\setminus(Y\times\{0\})$ is an analytic diffeomorphism.
\end{lem}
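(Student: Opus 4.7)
The construction will follow the strategy of \cite[Lem.~2.2]{br}. Write $Y=\{f_1=\cdots=f_r=0\}$ for $f_1,\ldots,f_r\in\an(\R^n)$ and set $f:=f_1^2+\cdots+f_r^2\in\an(\R^n)$, so that $f\geq 0$ and $Y=\{f=0\}$. I will define
\[
G(x,y_1,y_2):=y_2^3-y_1^2-f(x)^2\in\an(\R^{n+2})
\quad\text{and}\quad
Z:=\{G=0\}\subset\R^{n+2},
\]
and show $Z$ has all the required properties. The continuous map $\sigma:\R^{n+1}\to\R^{n+2}$, $(x,y_1)\mapsto\bigl(x,y_1,\sqrt[3]{y_1^2+f(x)^2}\bigr)$, using the real cube root, has image $Z$ and is a section of $\pi|_Z$; hence $\pi|_Z$ is a homeomorphism onto $\R^{n+1}$. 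A direct computation gives $\nabla G=(-2f\,\nabla f,-2y_1,3y_2^2)$, so on $Z$ the condition $\nabla G(p)=0$ is equivalent to $y_1=y_2=0$, which via $G(p)=0$ forces $f(x)=0$, i.e.\ $x\in Y$; conversely, for $x\in Y$ every $(x,0,0)\in Z$ satisfies $f\nabla f=0$. Thus $\Sing(Z)=Y\times\{(0,0)\}$.

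The delicate part is irreducibility and coherence, which I plan to derive from the irreducibility of $G_p$ in the regular local ring of real analytic germs $\mathcal{O}_{\R^{n+2},p}$ at every $p\in Z$. At smooth points of $Z$ this is immediate, since $\nabla G(p)\neq 0$ gives $\mathrm{ord}_p(G)=1$. At a singular point $p=(x_0,0,0)$ with $x_0\in Y$, one recognises $G_p=y_2^3-a$ as a Weierstrass polynomial of degree $3$ in $y_2$ over $\mathcal{O}_{\R^{n+1},(x_0,0)}$, where $a:=y_1^2+f(x_0+v)^2$. Since $f(x_0)=0$, the term $f^2$ has order $\geq 2$, while the $y_1^2$-piece contributes a non-degenerate quadratic form, so $\mathrm{ord}(a)=2$; as $2$ is not divisible by $3$, $a$ cannot be a cube in the UFD $\mathcal{O}_{\R^{n+1},(x_0,0)}$, so $y_2^3-a$ is irreducible there, and hence $G_p$ is irreducible. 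Local irreducibility at every point of $Z$ together with the connectedness of $Z$ (homeomorphic to $\R^{n+1}$ via $\pi|_Z$) will imply that $Z$ is globally irreducible as a $C$-analytic set.

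For coherence I would verify $\J(Z)_p=(G_p)$ at every $p\in Z$ via a real-versus-complex dimension comparison. The real germ $Z_p$ has dimension $n+1$ because $\pi|_Z$ is a local homeomorphism onto an open set of $\R^{n+1}$; the complex germ $Z_p^{\C}$ also has dimension $n+1$, since the same order-$2$ argument over $\mathcal{O}_{\C^{n+1},(x_0,0)}$, where $a=(y_1+if)(y_1-if)$ factors into two coprime factors of order $1$ (neither a cube), shows that $G$ remains irreducible complex-analytically. The equality $\dim_{\R} Z_p=\dim_{\C} Z_p^{\C}$ forces every real analytic germ vanishing on $Z_p$ to vanish on $Z_p^{\C}$ as well, and the complex Nullstellensatz together with the irreducibility of $G_p$ then places such a germ in $(G_p)$, yielding $\J(Z)_p=(G_p)$ and therefore coherence. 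The main obstacle will be precisely this step: ensuring $\dim_\R Z_p=\dim_\C Z_p^{\C}$, which is the whole point of the sum-of-squares form $y_1^2+f^2$---it is designed to make $Z$ pure of dimension $n+1$ and to keep its complexification irreducible, thereby preventing phantom complex components that would enlarge $\J(Z)_p$ beyond $(G_p)$.
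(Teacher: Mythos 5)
Your construction of $Z=\{G=0\}$ with $G := y_2^3 - y_1^2 - f(x)^2$ (the paper's $g$ up to sign) and the verification that $\pi|_Z$ is a homeomorphism match the paper exactly, and your formula $\Sing(Z)=Z\cap\{\nabla G=0\}$ agrees with its conclusion; but you reach the two delicate facts---coherence, \emph{i.e.}\ $\J(Z_p)=(G_p)$, and irreducibility of $G_p$---by a genuinely different route. The paper regards $g$ as a distinguished polynomial of degree $2$ in $y_1$, applies R\"uckert division to an arbitrary $h\in\J(Z_p)$, then kills the remainder $ay_1+b$ by exploiting the $y_1\mapsto -y_1$ symmetry of $Z$ together with a height bound on $\J(Z_p)$ (a nonzero $b\in\J(Z_p)\cap\R\{x,y_2\}$ would force $\dim Z_p\leq n$); irreducibility is then checked directly by setting $x=0$ and observing $y_1^2-y_2^3$ has no factorization into $y_1$-linear factors. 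You instead regard $G$ as a degree-$3$ Weierstrass polynomial in $y_2$ of the form $y_2^3-a$ with $a\in\R\{x,y_1\}$, note $\operatorname{ord}(a)=2$ (the $y_1^2$ term cannot cancel against $f^2$) and that $2$ is prime to $3$, so $a$ is not a cube in $\R\{x,y_1\}$ nor in $\C\{x,y_1\}$---giving irreducibility over both rings at one stroke---and then derive coherence from the complex irreducibility and the equality $\dim_\R Z_p=\dim_\C Z_p^\C=n+1$ via the R\"uckert Nullstellensatz. Both strategies are sound. Yours trades the paper's elementary but fiddly division-and-symmetry manipulation for a one-line cube-root obstruction, but pays by invoking the real-versus-complex dimension criterion for ideals of real analytic germs (essentially the Whitney--Bruhat coherence criterion), which is heavier machinery than the paper needs and which you would still have to justify carefully rather than leave implicit. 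One ordering remark: the inclusion $Z\cap\{\nabla G=0\}\subseteq\Sing(Z)$ already presupposes $\J(Z_p)=(G_p)$, so in a polished writeup the singular-locus computation should follow, not precede, the coherence step---exactly as the paper arranges it.
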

\begin{proof}
Let $f\in\an(\R^n)$ be a global analytic equation of $Y$ and consider the real analytic function $g(x,y_1,y_2):=f(x)^2+y_1^2-y_2^3\in\an(\R^{n+2})$. Define $Z:=\{g=0\}$. Given $(x,y_1)\in\R^{n+1}$, the formula $y_2=(f(x)^2+y_1^2)^{1/3}$ provides the unique solution to the equation $g(x,y_1,y_2)=0$. Hence, $\pi|_Z:Z\to\R^{n+1}$ is a homeomorphism and the restriction $\pi|_{Z\setminus(Y\times\{(0,0)\})}:Z\setminus(Y\times\{(0,0)\})\to\R^{n+1}\setminus(Y\times\{0\})$ is an analytic diffeomorphism.

Let $p:=(p_0,p_1,p_2)\in Z$. If $p\not\in Z\cap\{f(x)=0,y_1=0\}=Y\times\{(0,0)\}$, then it is a regular point of $Z$, because $\frac{\partial{g}}{\partial y_2}(p)\neq0$. As a consequence, the germ $Z_p$ is irreducible and coherent.

Suppose now $p=(p_0,0,0)\in Y\times\{(0,0)\}$. Let us prove: \em The ideal $\J(Z_p)$ of analytic germs vanishing identically on $Z_p$ is generated by $g$\em. Consequently, $Z_p$ is coherent and
\[
\Sing(Z)=Z\cap\{\nabla g(x,y_1,y_2)=0\}=\{f(x)=0,y_1=0,y_2=0\}=Y\times\{(0,0)\}.
\]

After a translation we may assume $p=(0,0,0)$ and we change $f$ by $f(x+p_0)$ (but keep the notation $f$ to simplify notation). As $f(0)=0$, the convergent series $g$ is a distinguished polynomial of degree $2$ with respect to the variable $y_1$. Pick $h\in\J(Z_p)$ and divide it by $g$ using R\"uckert division theorem. There exist analytic series $q\in\R\{x,y_1,y_2\}$ and $a,b\in\R\{x,y_2\}$ such that
\begin{equation}\label{f1}
h(x,y_1,y_2)=q(x,y_1,y_2)g(x,y_1,y_2)+a(x,y_2)y_1+b(x,y_2).
\end{equation}
Changing $y_1$ by $-y_1$ we obtain 
\begin{equation}\label{f2}
h(x,-y_1,y_2)=q(x,-y_1,y_2)g(x,y_1,y_2)-a(x,y_2)y_1+b(x,y_2).
\end{equation}
Adding equations \eqref{f1} and \eqref{f2}, we obtain 
\[
h(x,y_1,y_2)+h(x,-y_1,y_2)=\big(q(x,y_1,y_2)+q(x,-y_1,y_2)\big)g(x,y_1,y_2)+2b(x,y_2).
\]
Observe that $Z$ is symmetric with respect to the variable $y_1$, that is, $(x,y_1,y_2)\in Z$ if and only if $(x,-y_1,y_2)\in Z$. Consequently, $h(x,y_1,y_2)+h(x,-y_1,y_2)\in\J(Z_p)$ and we deduce that $b(x,y_2)\in\J(Z_p)$. Assume by contradiction that $b(x,y_2)\neq0$. Then $b(x,y_2)\in(\J(Z_p)\cap\R\{x,y_2\})\setminus\{0\}$ and by \cite[II.2.3]{rz} the ideal $\J(Z_p)$ has height $\geq2$. This means that the dimension of the germ $Z_p$ is $\leq n+2-2=n$, which is a contradiction because, as $Z$ is homeomorphic to $\R^{n+1}$, we have $\dim(Z_p)=n+1$. Thus, $b=0$ and
\[
h(x,y_1,y_2)=q(x,y_1,y_2)g(x,y_1,y_2)+a(x,y_2)y_1.
\]
As $h,g\in\J(Z_p)$, we have $a(x,y_2)y_1\in\J(Z_p)$. Assume by contradiction that $a(x,y_2)\neq0$. Then 
\[
a(x,y_2)^2f(x)^2-a(x,y_2)^2y_2^3=a(x,y_2)^2g(x,y_1,y_2)-a(x,y_2)^2y_1^2\in(\J(Z_p)\cap\R\{x,y_2\})\setminus\{0\}.
\]
Analogously to what we have inferred from the assumption $b\neq0$, we also achieve a contradiction in this case. We conclude that $h=qg$. Thus, $\J(Z_p)=g\R\{x,y_1,y_2\}$, as claimed.

To finish we have to prove: \em $g$ is irreducible in $\R\{x,y_1,y_2\}$\em. This means that the ideal $\J(Z_p)$ is prime and the analytic germ $Z_p$ is irreducible. Note that local irreducibility (at each point $p\in Z$) implies global irreducibility. 

As $g$ is a distinguished polynomial with respect to $y_1$, it is enough to prove the irreducibility of $g$ in $\R\{x,y_2\}[y_1]$. As $g$ is a monic polynomial with respect to $y_1$, if it is reducible, there exists polynomials of degree one $y_1+a_1,y_1+a_2\in\R\{x,y_2\}[y_1]$ such that $g=(y_1+a_1)(y_1+a_2)$. If we make $x=0$, we have 
\begin{align*}
y_1^2-y_2^3&=g(0,y_1,y_2)=(y_1+a_1(0,y_2))(y_1+a_2(0,y_2))\\
&=y_1^2+(a_1(0,y_2)+a_2(0,y_2))y_1+a_1(0,y_2)a_2(0,y_2),
\end{align*}
so $a_2(0,y_2)=-a_1(0,y_2)$ and $y_2^3=a_1(0,y_2)^2$, which is a contradiction. Consequently, $g$ is irreducible in $\R\{x,y_1,y_2\}$, as required. 
\end{proof}

\subsection{Proof of Theorem \ref{thm:main2}}\label{thm2}
Let $X\subset\R^m$ be a locally compact set and let $Y$ be a $C$-analytic set. We must prove that $\mscr{C}^\infty_*(X,Y)$ is dense in $\mscr{C}^0_*(X,Y)$. By the second part of Corollary \ref{cor:closedness}, we can assume $X$ is closed in $\R^m$ and $Y$ is a $C$-analytic subset of some $\R^n$. We assume $X$ is non-compact. If $X$ is compact the proof is similar, but easier. Denote $\pi:\R^{n+2}\to\R^{n+1}$ the projection onto the first $n+1$ coordinates. By Lemma \ref{y} there exists an irreducible coherent $C$-analytic subset $Z$ of $\R^{n+2}$ such that $\Sing(Z)=Y\times\{(0,0)\}\subset\{x_{n+1}=0,x_{n+2}=0\}$ and the restriction $\psi:=\pi|_{Z}:Z\to\R^{n+1}$ is a homeomorphism. 

As $Z$ is a coherent analytic subset of $\R^{n+1}$, then the pair $(Z,\an_{\R^{n+2}}|_Z)$ (with the analytic structure induced by the one of $\R^{n+2}$) is a (coherent) real analytic space. By \cite[\S13]{bm2} there exist a real analytic manifold $Z'\subset\R^q$ and a proper real analytic map $\phi:Z'\to Z$ such that the restriction
\[
\phi|_{Z'\setminus\phi^{-1}(\Sing(Z))}:Z'\setminus\phi^{-1}(\Sing(Z))\to Z\setminus\Sing(Z)
\]
is a real analytic diffeomorphism and $Y':=\phi^{-1}(\Sing(Z))=\phi^{-1}(Y\times\{(0,0)\})$ is an analytic normal-crossings divisor of $Z'$.

Let $f\in\mscr{C}^0_*(X,Y)$ and let $\veps:X\to\R^+$ be a strictly positive continuous function. As $X$ is non-compact and $f$ is proper, $f(X)$ is unbounded in $\R^n$. In this way, there exists an exhaustion $\{L_\ell\}_{\ell\in\N}$ of $\R^{n+1}$ by compact sets such that $L_{\ell-1}\subsetneq\Int_{\R^{n+1}}(L_\ell)$ and $(L_\ell\setminus L_{\ell-1})\cap(f(X)\times\{0\})\neq\varnothing$ for each $\ell\in\N$, where $L_{-1}:=\varnothing$. As $Y$ is closed in $\R^{n+1}$ and $f$ is proper, $K_\ell:=(f,0)^{-1}(L_\ell\cap(Y\times\{0\}))$ is a compact subset of $X$ for each $\ell\in\N$. Define the non-empty compact sets $N_\ell:=L_\ell\setminus \Int_{\R^{n+1}}(L_{\ell-1})$ and $H_\ell:=K_\ell\setminus\Int_X(K_{\ell-1})$ for each $\ell\in\N$, where $K_{-1}:=\varnothing$. Note that $X=\bigcup_{\ell\in\N}H_\ell$ and $(f,0)(H_\ell)\subset N_\ell$ for each $\ell\in\N$. Define $\delta_\ell:=\min_{H_\ell}(\veps/6)>0$ and choose a strictly positive continuous function $\delta:\R^{n+1}\to\R^+$ such that $\max_{N_\ell}(\delta)\leq\delta_\ell$ for each $\ell\in\N$ (see Remark \ref{rem:topologies}). Observe that $\delta\circ(f,0):X\to\R$ satisfies
\[
\delta\circ(f,0)\leq\veps/6\,\text{ on $X$}.
\]
 Indeed, $\max_{H_\ell}(\delta\circ(f,0))\leq\max_{N_\ell}(\delta)\leq\delta_\ell=\min_{H_\ell}(\veps/6)$ for each $\ell\in\N$.

By Lemma \ref{lem:approx} the map
\[
\mscr{C}^0(X,\R^{n+1})\to\mscr{C}^0(X,\R),\ g\mapsto\delta\circ g
\]
is continuous. Thus, there exists a strictly positive continuous function $\gamma:X\to\R^+$ such that: if $f_1\in\mscr{C}^0(X,\R^{n+1})$ satisfies $\|f_1-(f,0)\|_{n+1}<\gamma$, then $|\delta\circ f_1-\delta\circ(f,0)|<\veps/6$. In particular,
\begin{equation}\label{detail}
\delta\circ f_1<\veps/3\, \text{ on $X$.}
\end{equation}

Consider the proper surjective map $\psi\circ\phi:Z'\to\R^{n+1}$, which satisfies $(\psi\circ\phi)(Y')=Y\times\{0\}$. Denote $(\psi\circ\phi)':Y'\to Y\times\{0\}$ the restriction of $\psi\circ\phi$ from $Y'$ to $Y\times\{0\}$. Using Lemma \ref{lem:approx} again we deduce that the map
\[
\mscr{C}^0(Y',Y')\to\mscr{C}^0(Y',Y\times\{0\}),\ g\mapsto(\psi\circ\phi)'\circ g
\]
is continuous. Let $\zeta:Y'\to\R^+$ be a strictly positive continuous function such that: if $g\in\mscr{C}^0(Y',Y')$ satisfies $\|g-{\id_{Y'}}\|_q<\zeta$, then $\|(\psi\circ\phi)'\circ g-(\psi\circ\phi)'\circ{\id_{Y'}}\|_{n+1}<\delta\circ(\psi\circ\phi)'$. By Proposition \ref{wr} there exists an open neighborhood $W\subset Z'$ of $Y'$ and a $\mscr{C}^\infty$ weak retraction $\rho:W\to Y'$ such that $\|\rho|_{Y'}-{\id_{Y'}}\|_q<\zeta$, so $\|(\psi\circ\phi)'\circ\rho|_{Y'}-(\psi\circ\phi)'\circ{\id_{Y'}}\|_{n+1}<\delta\circ(\psi\circ\phi)'$. Define the open neighborhood $W'\subset W$ of $Y'$ by setting
\[
W':=\big\{z\in W:\,\|(\psi\circ\phi)(\rho(z))-(\psi\circ\phi)(z)\|_{n+1}<\delta((\psi\circ\phi)(z))\big\}.
\]

Consider the closed subset $C':=Z'\setminus W'$ of $Z'$, which does not meet $Y'=(\psi\circ\phi)^{-1}(Y\times\{0\})$. As $\psi\circ\phi:Z'\to\R^{n+1}$ is proper, $C:=(\psi\circ\phi)(C')$ is a closed subset of $\R^{n+1}$, which does not meet $Y\times\{0\}$. Let $\eta:Y\times\{0\}\to\R^+$ and $\eta':X\to\R^+$ be the strictly positive continuous functions given by
\[
\eta(y,0):=\dist_{\R^{n+1}}((y,0),C)/2\,\text{ if $y\in Y$ and }\, \eta':=\eta\circ (f,0).
\]
Define the strictly positive continuous function $\xi:X\to\R^+$ as $\xi:=\min\{\gamma,\eta',\veps/3\}/2$. The map $f':=(f,\xi):X\to\R^{n+1}$ satisfies 
\begin{equation}\label{uco0}
\|f'-(f,0)\|_{n+1}=\xi<\min\{\gamma,\eta',\veps/3\}\leq\frac{\veps}{3} 
\end{equation}
and $\delta\circ f'<\veps/3$ on the whole $X$ (see \eqref{detail}). Consider the continuous function 
\begin{multline*}
f'':=(\phi|_{Z'\setminus Y'})^{-1}\circ\psi^{-1}|_{\R^{n+1}\setminus\{x_{n+1}=0\}}\circ f':
\\X\to\R^{n+1}\setminus\{x_{n+1}=0\}\to Z\setminus(\{Y\}\times\{(0,0)\})\to Z'\setminus Y'
\end{multline*}
and observe that $f'(x)=(\psi\circ\phi)(f''(x))$ for each $x\in X$.

We claim: $f''(X)\subset W'$. 

If $x\in X$, then $(\psi\circ\phi)(f''(x))=(f(x),\xi(x))$. Thus, 
\[
\|(\psi\circ\phi)(f''(x))-(f(x),0)\|_{n+1}=\xi(x)\leq\eta'(x)<\dist_{\R^{n+1}}((f(x),0),C),
\]
so $(\psi\circ\phi)(f''(x))\not\in C$. Consequently, $f''(x)\not\in C'=Z'\setminus W'$, that is, $f''(x)\in W'$.

In addition, we have:\em
\begin{equation}\label{uco3}
\|(\psi\circ\psi)(\rho(f''(x)))-f'(x)\|_{n+1}<\frac{\veps(x)}{3}\, \text{ for each $x\in X$}.
\end{equation}\em
Indeed, pick $x\in X$. As $f''(x)\in W'$ and $(\delta\circ f')(x)<\veps(x)/3$, it holds
\begin{align*}
\|(\psi\circ\psi)(\rho(f''(x)))-f'(x)\|_{n+1}&=\|(\psi\circ\psi)(\rho(f''(x)))-(\psi\circ\psi)(f''(x))\|_{n+1}\\
&<\delta((\psi\circ\phi)(f''(x)))=(\delta\circ f')(x)<\frac{\veps(x)}{3}.
\end{align*}

The following commutative diagram summarizes the situation we have achieved until the moment.
\[
\xymatrix{
W'\ar@<0.6ex>[d]^\rho\ar@{^{(}->}[dr]\\
Y'\ar@{^{(}->}[r]\ar[d]^{\phi}&Z'\ar[d]^{\phi}\\
Y\times\{(0,0)\}=\Sing(Z)\ar[d]^\psi\ar@{^{(}->}[r]&Z\ar@{^{(}->}[r]\ar@<-0.6ex>@{->}[dr]_\psi&\R^{n+2}\ar[d]^\pi\\
Y\times\{0\}\ar@{^{(}->}[rr]&&\R^{n+1}\ar@<-0.6ex>@{->}[ul]_{\psi^{-1}}\\
X\ar[u]_{(f,0)}\ar[rr]_{f':=(f,\xi)}\ar@/^5.5pc/[uuuu]^{f''}&&\R^{n+1}\setminus\{x_{n+1}=0\}\ar@{^{(}->}[u]\ar@/_5.5pc/[uuul]_{\phi|_{Z\setminus\Sing(Z)}^{-1}\circ\psi^{-1}|_{\R^{n+1}\setminus\{x_{n+1}=0\}}}
}
\]

Proceeding as in {\sc Step I} of the proof of Theorem \ref{thm:main1}, we deduce that there exists an open neighborhood $U\subset\R^m$ of $X$ and a continuous extension $F'':U\to Z'$ of $f''$. The inverse image $U_0:=F''^{-1}(W')$ is an open neighborhood of $X$ in $U$ and the restriction, $F''|_{U_0}:U_0\to W'$ is a continuous map between the real analytic manifolds $U_0$ and $W'$. We substitute $U$ by $U_0$ and $F''$ by $F''|_{U_0}$, but we keep the original notation to ease the writing. Let $H_0:U\to W'$ be a real analytic map arbitrarily close to $F''$ in $\mscr{C}^0(U,W')$, which exists by Whitney's approximation theorem. The restriction $h_0:=H_0|_X:X\to W'$ is a real analytic map arbitrarily close to $f''$ in $\mscr{C}^0(X,W')$. Consider the $\mscr{C}^\infty$ map $h:X\to Y$ such that $(h,0):=(\psi\circ\phi)'\circ\rho|_{W'}\circ h_0$. As the map 
\[
\mscr{C}^0(X,W')\to\mscr{C}^0(X,Y),\ g\mapsto(\psi\circ\phi)'\circ\rho|_{W'}\circ g
\] 
is continuous, we may assume 
\begin{equation}\label{uco4}
\|(h(x),0)-(\psi\circ\phi)'(\rho(f''(x)))\|_{n+1}<\frac{\veps(x)}{3}\, \text{ for each $x\in X$}.
\end{equation}
Given any $x\in X$ we deduce by \eqref{uco0}, \eqref{uco3} and \eqref{uco4},
\begin{align*}
\|h(x)&-f(x)\|_n=\|(h(x),0)-(f(x),0)\|_{n+1}\leq\|(h(x),0)-(\psi\circ\phi)'(\rho(f''(x)))\|_{n+1}\\
&+\|(\psi\circ\phi)'(\rho(f''(x)))-f'(x)\|_{n+1}+\|f'(x)-(f(x),0)\|_{n+1}<\frac{\veps(x)}{3}+\frac{\veps(x)}{3}+\frac{\veps(x)}{3}=\veps(x)
\end{align*}

Thus, we have found a $\mscr{C}^\infty$ map $h:X\to Y$ that is arbitrarily close to $f$. In the same vein of \cite[Thm. II.1.5]{hir3} one easily shows that $\mscr{C}^0_*(X,Y)$ is an open subset of $\mscr{C}^0(X,Y)$, so we can assume that $h$ is in addition proper, as required.
\qed

\begin{remarks}
(i) In the preceding proof we have used that the continuous map $f:X\to Y\subset\R^n$, we want to approximate, is proper exactly when we need to find for a strictly positive continuous function $\veps:X\to\R^+$ another strictly positive continuous function $\delta:\R^{n+1}\to\R^+$ such that $\delta\circ(f,0)<\veps/6$. $\sqbullet$

(ii) The techniques we have developed in this section cannot be adapted to guarantee relative approximation. Let us summarize the principal points of the proof of Theorem \ref{thm:main2} (we keep all the notations already introduced there). We begin moving the image of the map $f$ we want to approximate outside $Y$, but inside a small neighborhood of $Y$ in $\R^{n+1}$ (we have constructed the map $f'$). {\em This movement is crucial to make the rest of our construction work but keeps off relative approximation}. 

Recall that $Y\times\{(0,0)\}$ is the singular set of an irreducible coherent $C$-analytic subset $Z$ of $\R^{n+2}$, which is in addition homeomorphic to $\R^{n+1}$ (via the projection of $\R^{n+2}$ to $\R^{n+1}$ that forgets the last coordinate). Once this is done, we lift the image of $f'$ to $Z\setminus\Sing(Z)$. Next, we use resolution of singularities to change $Y\times\{(0,0)\}=\Sing(Z)$ by an analytic normal-crossings divisor $Y'$ contained in a real analytic manifold $Z'$. We denote $\phi:Z'\to Z$ the (proper) resolution map and consider $f''$ the composition of the lift of $f'$ with $(\phi|_{Z'\setminus Y'})^{-1}$. The image of $f''$ is contained in $Z'\setminus Y'$. 

We extend $f''$ continuously to an open neighborhood $U_0\subset\R^m$ of $X$ such that its image is contained in a small neighborhood $W'\subset Z'$ of $Y'$ endowed with a $\mscr{C}^\infty$ weak retraction $\rho:W\to Y$. The previous extension $F'':U_0\to W'$ is a continuous map between submanifolds of Euclidean spaces, so here Whitney's approximation theorem works and provides a $\mscr{C}^\infty$ approximation $H_0:U_0\to W'$ close to $F''$. We can even assume that the images of $H_0$ and $F''$ are contained in $W'\subset Y'$. Now, we compose $h_0:=H_0|_X$ with $(\psi\circ\phi)'\circ\rho$ to obtain the $\mscr{C}^\infty$ approximating map $h:X\to Y$ of the continuous map $f:X\to Y$. 

There are some difficulties to achieve relative approximation results:

(1) We have moved the image of $f$ off $Y$ to construct $F''$ and we have lost the control of the restrictions of $f$ to subsets $X'$ of $X$.

(2) In case $Y$ is an analytic normal-crossings divisor of a real analytic manifold $Z$, we can skip the first part of the proof and keep the image of $f$ inside $Y$. Then, we extend $f$ continuously to an open neighborhood $U_0\subset\R^m$ of $X$ such that its image is contained in a small neighborhood $W\subset Z$ of $Y$ endowed with a $\mscr{C}^\infty$ weak retraction $\rho:W\to Y$. But now we have to deal with $\rho$, which is not a true retraction and moves the points of $Y$. Thus, if $X'\subset X$ satisfies that $f(X')$ is not contained in the set of fixed points of $\rho$, then it seems difficult to assure that the restriction to $X'$ of the $\mscr{C}^\infty$ approximation behaves as $f$ on $X'$.

(iii) The previous remark does not mean that in the framework of $C$-analytic sets relative approximation is not possible (see the example below). What we have pointed out is that our techniques are not a good tool to approach relative approximation and new ideas are needed.
\end{remarks}

\begin{example}
Let $X':=[-1,0]\subset X:=[-1,1]\subset\R$ and let $Y:=\{xy=0\}\subset\R^2$. Consider the continuous map 
$$
f:X\to Y,\ t\mapsto\begin{cases}
(t,0)&\text{if $t\in[-1,0]$,}\\
(0,t)&\text{if $t\in[0,1]$,}
\end{cases}
$$
which is $\mscr{C}^\infty$ on $X'$. Fix $\veps>0$ and let $\theta_1,\theta_2:[-1,1]\to[0,1]$ be $\mscr{C}^\infty$ bump functions such that: 
\begin{itemize}
\item $\theta_1|_{[-1,0]}=1$ and $\theta_1|_{[\frac{\veps}{2},1]}=0$.
\item $\theta_2|_{[-1,\frac{\veps}{2}]}=0$ and $\theta_1|_{[\veps,1]}=1$.
\end{itemize}
Define
$$
g:X\to Y,\ t\mapsto\begin{cases}
(t\theta_1(t),0)&\text{if $t\in[-1,\frac{\veps}{2}]$,}\\
(0,t\theta_2(t))&\text{if $t\in[\frac{\veps}{2},1]$,}
\end{cases}
$$
which is a $\mscr{C}^\infty$ function. Observe that $g$ coincides with $f$ outside the interval $[0,\veps]$. We have 
$$
\|f(t)-g(t)\|_2=\begin{cases}
\|(0,t)-(t\theta_1(t),0)\|_2=|t|\sqrt{1+\theta_1^2(t)}<\veps&\text{if $t\in[0,\frac{\veps}{2}]$,}\\
\|(0,t)-(0,t\theta_2(t))\|_2=|t||1-\theta_2(t)|<\veps&\text{if $t\in[\frac{\veps}{2},\veps]$,}
\end{cases}
$$
so $g$ is a $\mscr{C}^\infty$ approximation of $f$ such that $g|_{X'}=f|_{X'}$. $\sqbullet$
\end{example}

\bibliographystyle{amsalpha}

\end{document}